\documentclass[12pt]{article}
\usepackage{bbm, amsmath, amsfonts, amscd, latexsym, amsthm, amssymb, graphicx, euscript, mathrsfs,refcount,hyperref}
\usepackage{mathtools}
\usepackage[nottoc,notlot,notlof]{tocbibind}
\usepackage[dvipsnames]{xcolor}
\usepackage{tikz}
\usetikzlibrary{patterns}

\newtheorem{theor}{\hspace{1cm}{\sc Theorem}}[section]
\newtheorem{utver}[theor]{\hspace{1cm}{\sc Proposition}}
\newtheorem{predpol}[theor]{\hspace{1cm}{\sc Assumption}}
\newtheorem{sledst}[theor]{\hspace{1cm}{\sc Corollary}}
\newtheorem{lemma}[theor]{\hspace{1cm}{\sc Lemma}}
\newtheorem{conj}[theor]{\hspace{1cm}{\sc Conjecture}}
\newtheorem*{utver*}{\hspace{1cm}{\sc Proposition}}
\theoremstyle{definition}
\newtheorem{defin}[theor]{\hspace{1cm}{\sc Definition}}
\newtheorem{exa}[theor]{\hspace{1cm}{\sc Example}}
\newtheorem{rem}[theor]{\hspace{1cm}{\sc Remark }}
\newtheorem{prb}[theor]{\hspace{1cm}{\sc Problem}}

\newcommand{\ind}{\mathop{\rm ind}\nolimits}

\newcommand{\codim}{\mathop{\rm codim}\nolimits}

\newcommand{\vol}{\mathop{\rm Vol}\nolimits}
\newcommand{\area}{\mathop{\rm Area}\nolimits}
\newcommand{\conv}{\mathop{\rm conv}\nolimits}

\newcommand{\MV}{\mathop{\rm MV}\nolimits}

\newcommand{\MP}{\mathop{\rm MP}\nolimits}

\newcommand{\supp}{\mathop{\rm supp}\nolimits}

\newcommand{\bigslant}[2]{{\raisebox{.2em}{$#1$}\left/\raisebox{-.2em}{$#2$}\right.}}
\marginparwidth 40pt \topmargin=-22mm 
\oddsidemargin=5mm \textwidth=165mm \textheight=247mm
\def\R{\mathbb R}
\def\N{\mathbb N}
\def\Z{\mathbb Z}

\def\C{\mathbb C}
\def\CC{({\mathbb C}\setminus 0)}

\def\newton{\EuScript{N}}

\emergencystretch=30pt

\begin{document}
\title{On the Singular Locus of a Plane Projection of a Complete Intersection}
\author{Arina Voorhaar\thanks{{\it National Research University Higher School of Economics}, Moscow, Russia}\thanks{{\it University of Geneva}, Geneva, Switzerland}}
\date{}
\maketitle{}
\begin{abstract}
In this paper, we compute the number of self-intersections of a plane projection of a generic complete intersection curve defined by polynomials with the given support. Moreover, we discuss the tropical counterpart of this problem. 
\end{abstract}
\tableofcontents
\section{Introduction}
One of the main tools in the study of singularities of maps is the theory of Thom polynomials. They express the fundamental classes of the multisingularity strata for a generic map of arbitrary compact smooth manifolds in terms of their characteristic classes. This theory is however not applicable to a natural class of generic maps, namely the maps between varieties that are defined by generic Laurent polynomials with given Newton polytopes. Such varieties are not compact, and the maps are not proper. At the same time, their toric compactifications associated with the Newton polytopes do not satisfy the genericity conditions necessary for classical Thom polynomials to be applicable (for details see e.g. Example 1.1 and Remark 1.2 in \cite{E3}). Therefore, working with multisingularity strata of the abovementioned class of maps requires alternative methods. We will now give a short overview of developments in this direction.

The discriminant (i.e. $\mathcal A_1$ stratum) of a projection of a generic hypersurface was described in \cite{GP}. If $H$ is a hypersurface given by a generic polynomial $f(x_1,\ldots,x_d,y)$ and $\pi$ is the projection forgetting the last coordinate, then the Newton polytope of the polynomial defining the abovementioned  $\mathcal A_1$ stratum is equal to the fiber polytope $\mathcal Q_{\pi}(f)\subset\R^d$ of the Newton polytope $\newton(f).$ 

The image (i.e. $\mathcal A_0$ stratum) of a projection of a generic complete intersection was studied by A. Esterov and A. Khovanskii in \cite{EK}. They proved that the image under an epimorphism  $\pi\colon\CC^{n}\to\CC^{n-k}$ of a complete intersection $\{f_1=\ldots=f_{k+1}\}\subset\CC^n$ defined by generic polynomials with given Newton polytopes $\newton(f_i)=\Delta_i$ is a hypersurface $\{g=0\}\subset\CC^{n-k},$ whose Newton polytope $\newton(g)\subset\R^{n-k}$ is equal to the so-called {\it mixed fiber polytope} $\MP_{\pi}(\Delta_1,\ldots,\Delta_{k+1})$ of the polytopes $\Delta_1,\ldots,\Delta_{k+1}.$ 

An approach to studying the strata of higher codimension, e.g for $\mathcal A_2$ (cusps) and $2\mathcal A_1$ (double points), is suggested in \cite{E3}. However, this approach only works under additional assumptions on the Newton polytopes and employs operations with tropical fans of dimension that is too high for practical applications (namely, those dimensions are of the same order as the number of monomials inside the given Newton polytopes). 

For low dimensional maps or projections, the above mentioned strata are $0$-dimensional, and the problem of computing their cardinalities in terms of the given Newton polytopes arises naturally. 
For example, this problem was solved in \cite{FJR} for a mapping $\C^2\to\C^2$ whose components are generic polynomials of given degrees. See \cite{E3} for an overview of some other literature on problems of this kind.

To the best of our knowledge, our paper is the first work where such a problem is solved for polynomials with arbitrary Newton polytopes. 

\begin{theor}\label{mainintro}
Let $\Delta$ be a lattice polytope in $\Z^n\oplus \Z^2.$ If, for generic polynomials $f_1,\ldots,f_{n+1}$ supported at $\Delta,$ the image of the complete intersection curve $$\tilde{\mathcal C}={\{f_1=\ldots=f_{n+1}=0\}\subset\CC^n\times\CC^2}$$ under the projection $\pi\colon\CC^n\times\CC^2\to\CC^2$ is a reduced nodal curve, then the number of its nodes is given by formula (\ref{mainformula1}) in Theorem \ref{lemmain}. 
\end{theor}

\begin{rem}
The classification of the Newton polytopes, such that the abovementioned projection is not a nodal curve, is a non-trivial problem (see Example \ref{notnodal}) that is not addressed in this paper. Instead, we will prove a certain generalization of Theorem \ref{mainintro} which is applicable to all lattice polytopes $\Delta$ (see Theorem \ref{lemmain}).
\end{rem}

\begin{exa}[Counting the nodes of the projection of a complete intersection curve defined by generic equations of given degree]\label{simplex}
	For some support sets $A\subset\Z^{n+2},$ it is quite easy to show that the projection of a complete intersection given by generic polynomials supported at $A$ has only nodes as singularities. For instance, this is the case for $A=dT\cap\Z^3,$ where ${d\in\Z_{>0},}$ and $T\subset\R^3$ is the standard simplex. Let us compute the number of those nodes using (\ref{mainformula1}).  In the notation of this formula, we have  $n=1,$ the area of the polygon $P$ is equal to $d^4,$ the term $d^2$ comes from the area of the horizontal facet of $dT$, and the non-horizontal facets do not contribute, since for every $\Gamma\in\mathcal{F}(dT)\setminus\mathcal{H}(dT),$ we have $\ind_{v}(\Gamma\cap A)=1.$
	Thus, the answer is $$\mathcal{D}=\dfrac{d^4-2d ^3+d^2}{2}=\dfrac{d^2(d-1)^2}{2}.$$
\end{exa}      

This problem might also be of interest with regard to the study of algebraic knots, which is motivated by Viro's work \cite{V} about the rigid isotopy invariant called encomplexed writhe, as well as the works of Mikhalkin and Orevkov (see e.g. \cite{MO1}, \cite{MO2}
, \cite{MO3}). Namely, it is quite natural to estimate the complexity of an algebraic knot/link (that is, the minimal crossing number) in terms of the algebraic complexity of its defining equations (that is, the size of their Newton polytopes).

Theorem \ref{mainintro} gives an upper bound for the number of self-intersections of a projection of a real complete intersection curve onto a coordinate plane. 
In Example \ref{real} below we show that this upper bound is sharp for the case of real complete intersection links given by a pair of polynomials of given degree. 
Much harder problems, such as obtaining a sharp upper bound for arbitrary plane projections (not only onto coordinate planes), or topological classification of complete intersection links given by polynomials with arbitrary Newton polytopes, still remain unsolved.

\begin{exa}[real version of Example \ref{simplex}]\label{real}
Take two generic unions of $d$ planes as the hypersurfaces. Thus, we obtain a pair of varieties given by polynomials which are just products of the corresponding linear factors. Then, their intersection consists of $d^2$ lines, and the projection of this union of lines has $\dfrac{d^2(d^2-1)}{2}$ double points. Those double points are of two types: the ones which are stable and the ones which are not. The latter will disappear when we perturb the hypersurfaces. So, to find the sought number of stable double points, we need to compute the number of the double points that will disappear. Those are exactly the double points, whose preimages are intersections of a plane contained in one of the unions with the line of intersection of two planes contained in the other union. So, from $\dfrac{d^2(d^2-1)}{2}$, which is the total number of double points, we need to subtract the sum $\dfrac{d^2(d-1)}{2}+\dfrac{d^2(d-1)}{2}=d^2(d-1)$, and what we obtain as the result is exactly $\dfrac{d^2(d-1)^2}{2}$ nodes. Therefore, the upper bound obtained in the complex case is sharp for the real case. 
\end{exa}

\noindent {\bf Methodology and structure of the paper.} 
The Newton polytopes of objects such as the image or the discriminant $M$ of the projection of a complete intersection are known (see \cite{E2}, \cite{EK}, \cite{GP}). 
Therefore a natural first step in the study of the singularities of $M$ would be passing to the corresponding toric compactification $\bar{M}.$ 
If $\bar{M}$ did not have any additional singularities, then the problem of describing the simplest singularity strata of $M$, such as $\mathcal A_2$ and $2\mathcal A_1$, 
could be solved using classical methods. Unfortunately, the compactification $\bar{M}$ does in general have singularities at infinity. Moreover, these 
singularities are significantly more complicated than the ones studied. Dealing with them turns out to be the most challenging part in this class of problems. 
We reduce it to the so-called {\it forking--path singularities}.

This paper is organized as follows. Section \ref{preliminaries} is devoted to the notions and results that will be used throughout the paper. In Section \ref{mainresult} we fix the notation and state the main result of the paper: Theorem \ref{lemmain}. Section \ref{mainproof} is devoted to the proof of the main result. Finally, in Section \ref{fd} we discuss a few questions that naturally arise in the context of this article, including the tropical counterpart of the main problem. 

\noindent {\bf Acknowledgements.} This paper and the research behind it would not have been possible without the exceptional support, guidance and constant feedback of Alexander Esterov. I am also very grateful to Andr\'{a}s Szenes for his encouragement, help, and stimulating conversations. I would like to thank Patrick Popescu-Pampu and Grigory Mikhalkin for fruitful discussions, valuable comments and suggestions. The research was partially supported by the NCCR SwissMAP of the Swiss National Science Foundation.\\

\section{Preliminaries}\label{preliminaries}
\subsection{Newton Polytopes and Bernstein--Kouchnirenko Theorem}
\begin{defin}
	Let $f(x)=\sum_{a\in \Z^n} c_a x^a$ be a Laurent polynomial. The {\it support of $f$} is the set $\supp(f)\subset \Z^n$ which consists of all the points $a\in \Z^n$ such that the corresponding coefficient $c_a$ of $f$ is non-zero. 
\end{defin}

\begin{defin}
	The {\it Newton polytope of $f(x)$} is the convex hull of $\supp(f)$ in $\R^n$. In other words, it is the minimal convex lattice polytope in $\R^n$ containing the set $\supp(f)$. We denote the Newton polytope of a polynomial $f(x)$ by $\newton(f)$. 
\end{defin}

\begin{defin} \label{minkowski}
For a pair of subsets $A, B\subset\R^n$, their {\it Minkowski sum} is defined to be the set $A+B=\{a+b\mid a\in A, b\in B\}$. 
\end{defin}
The following fact provides a connection between the operations of the Minkowski addition and multiplication in the ring of Laurent polynomials.
\begin{utver} 
Let $f,g$ be a pair of Laurent polynomials. Then we have the following equality: $$\newton(fg)=\newton(f)+\newton(g).$$
\end{utver}

\begin{defin}\label{suppface}
	Let $A\subset\R^m$ be a convex lattice polytope and $\ell\in(\R^m)^*$ be a covector. Consider $\ell$ as a linear function, and denote by $\ell\mid_A$ its restriction to the polytope $A$. The function $\ell\mid_A$ attains its maximum at some face $\Gamma\subset A$. This face is called {\it the support face} of the covector $\ell$ and is denoted by $A^{\ell}$.
\end{defin}

\begin{defin}
	Let $\gamma\neq 0$ in $(\R^n)^*$ be a covector and $f(x)$ be a Laurent polynomial with the Newton polytope $\newton(f)$. The {\it truncation} of $f(x)$ with respect to $\gamma$ is the polynomial $f^{\gamma}(x)$ that is obtained from $f(x)$ by omitting the sum of monomials which are not contained in the support face ${\newton(f)^{\gamma}}$. 
\end{defin}

It is easy to show that for a system of equations $\{f_1(x)=\ldots=f_n(x)=0\}$ and an arbitrary covector $\gamma\neq 0$, the  system $\{f_1^{\gamma}(x)=\ldots=f_n^{\gamma}(x)=0\}$ by a monomial change of variables can be reduced to a system in $n-1$ variables at most. Therefore, for the systems with coefficients in general position, the ``truncated" systems are inconsistent in $\CC^n$. 

\begin{defin}
	Let $(f_1,\ldots, f_n)$ be a tuple of Laurent polynomials. In the same notation as above, the system ${f_1(x)=\ldots=f_n(x)=0}$ is called {\it Newton-nondegenerate}, if for any covector $0\neq\gamma\in(\R^n)^*$ the system $\{f_1^{\gamma}(x)=\ldots=f_n^{\gamma}(x)=0\}$ is not consistent in $\CC^n$.
\end{defin}

\begin{defin} \label{mv}
	Let $\mathscr P$ be the semigroup of all convex polytopes in $\R^n$ with respect to the Minkowski addition (see Definition \ref{minkowski}). The {\it mixed volume} is a unique function $$\MV\colon\underbrace{\mathscr{P}\times\ldots\times\mathscr{P}}_{\mbox{$n$ times}}\to\R$$ which symmetric, multilinear (with respect to the Minkowski addition) and which satisfies the following property: the equality $MV(P,\ldots, P)=\vol(P)$ holds for every polytope~${P\in \mathscr P}$.
\end{defin}

The following theorem allows to compute the number of roots for a non-degenerate polynomial system of equations in terms of the mixed volume of its Newton polytopes. 

\begin{theor}[Bernstein--Kouchnirenko formula, \cite{B}]
	The number of roots for a Newton-nondegenerate system of polynomial equations $\{f_1(x)=\ldots=f_n(x)=0\}$ in $\CC^n$ counted with multiplicities is equal to $n!\MV(\newton(f_1),\ldots, \newton(f_n))$.
\end{theor} 

\subsection{Fibers of Resultantal Sets}
This Subsection is devoted to one of the key results that we will use in this paper -- Theorem \ref{num_fib}. We refer the reader to the work \cite{E1} for proofs and details. Before we introduce all the necessary notation and state the theorem itself, let us consider the following example. 

\begin{exa}\label{1_dim_exa}
Let $\{f_1(x)=f_2(x)=0\}$ be a system of univariate equations with $\supp(f_1)=\{0,2\}\subset\Z$ and $\supp(f_2)=\{1,3\}\subset\Z.$ Thus we have  $f_1(x)=a+bx^2$ and  $f_2(x)=cx+dx^3.$ It is a well-known fact that such a system of equations is consistent if and only if its coefficients satisfy the equality $a(ad-bc)=0.$ 
Now, suppose that we take sufficiently generic polynomials $f_1(x)$ and $f_2(x)$ whose coefficients satisfy this relation. A very natural question is the following: how many solutions does such a system have? In this particular case it is quite easy to find the answer by hand: the system $\{f_1(x)=f_2(x)=0\}$ has $2$ roots. 

As we will see further, this answer can be obtained purely in terms of the combinatorial properties of the support sets $\supp(f_1)$ and $\supp(f_2).$ Namely, note that these sets can be shifted to the same proper sublattice $2\Z\subset \Z$ of index $2.$  
\end{exa}

Let $\mathcal A=(A_1,\ldots,A_{I})$ be a collection of finite sets $A_i\in\Z^n.$ 

\begin{defin}
If $I\neq 0,$ then the number $I-\dim\sum\limits_{1\leqslant i\leqslant I} A_i,$ where sumation is in the sense of Minkowski, is called {\it the codimension} of the collection $\mathcal A.$  The codimension of the empty collection is set to be $0.$
\end{defin}

\begin{exa}
The codimension of the collection $\mathcal A=(\{0,2\},\{1,3\})$ considered in Example \ref{1_dim_exa} is equal to $1.$ 
\end{exa}

\begin{defin}
A collection $\mathcal A=(A_1,\ldots,A_{I})$ of finite sets in $\Z^n$ is called {\it essential,} if its codimension is strictly greater than the codimension of every subcollection $A_{i_1} ,\ldots,A_{i_k},$ where $\{i_1,...,i_k\}$ is a subset of $\{1,...,I\}.$	
\end{defin}

\begin{exa}\label{ess}
The collection $\mathcal A=(\{0,2\},\{1,3\})$ considered in Example \ref{1_dim_exa} is essential, since the codimension of $\mathcal A$ is equal to $1$, while the codimensions of all of its subcollections are equal to $0.$
\end{exa}

\begin{defin}
A collection $\mathcal A=(A_1,\ldots,A_{I})$ of finite sets in $\Z^n$ is called {\it weakly essential,} if its codimension is not smaller than the codimension of each of the  subcollections $(A_{i_1} ,\ldots,A_{i_k})$ where $\{i_1,\ldots,i_k\}\subset
\{1,\ldots,I\}.$	
\end{defin}

\begin{exa}\label{w_ess}
The collection $\mathcal A=(A_1,A_2,A_3,A_4)$ of finite subsets in $\Z^3$ shown in  Figure $1$ below is weakly essential. 

\begin{center}
\begin{tikzpicture}
\draw[thick] (0,0)--(0,2);
\draw[thick] (2,0)--(2,2)--(4,0)--(2,0);
\draw[thick] (6,0)--(6,2)--(8,0)--(6,0);
\draw[thick] (10,0)--(10,2)--(11,-1)--(10,0);
\draw[thick] (11,-1)--(12,0)--(10,2)--(11,-1);
\draw[thick, dashed] (10,0)--(12,0);
\draw[fill,red] (0,0) circle [radius=0.1];
\draw[fill,red] (0,2) circle [radius=0.1];
\draw[fill,red] (2,0) circle [radius=0.1];
\draw[fill,red] (2,2) circle [radius=0.1];
\draw[fill,red] (4,0) circle [radius=0.1];
\draw[fill,red] (6,0) circle [radius=0.1];
\draw[fill,red] (6,2) circle [radius=0.1];
\draw[fill,red] (8,0) circle [radius=0.1];
\draw[fill,red] (10,0) circle [radius=0.1];
\draw[fill,red] (12,0) circle [radius=0.1];
\draw[fill,red] (10,2) circle [radius=0.1];
\draw[fill,red] (11,-1) circle [radius=0.1];
\node[left] at (0,0) {$(0,0,0)$};
\node[below] at (2,0) {$(0,0,0)$};
\node[below] at (6,0) {$(0,0,0)$};
\node[above left] at (10,0) {$(0,0,0)$};
\node[left] at (0,2) {$(1,0,0)$};
\node[right] at (2,2) {$(1,0,0)$};
\node[right] at (10,2) {$(1,0,0)$};
\node[left] at (0,2) {$(1,0,0)$};
\node[right] at (6,2) {$(1,0,0)$};
\node[below] at (4,0) {$(0,1,0)$};
\node[below] at (8,0) {$(0,1,0)$};
\node[right] at (11,-1) {$(0,0,1)$};
\node[right] at (12,0) {$(0,1,0)$};
\node[below] at (4,-1) {{\bf Figure 1.} A weakly essential collection of finite sets in $\Z^3$.};
\end{tikzpicture}
\end{center}
Indeed, its codimension is equal to $1,$ which is, for instance, equal to the codimension of the subcollection $(A_1,A_2,A_3).$
\end{exa}

\begin{lemma}
Every collection $\mathcal A=(A_1,\ldots,A_I )\subset \Z^n$ contains a unique essential subcollection of codimension not less than the codimensions of all other subcollections of $\mathcal A.$ 
\end{lemma}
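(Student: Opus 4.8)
The plan is to establish existence and uniqueness separately, both by studying how codimension behaves under unions of subcollections. For a subcollection $\mathcal B = (A_i)_{i \in J}$ indexed by $J \subseteq \{1,\ldots,I\}$, write $\codim \mathcal B = |J| - \dim \sum_{i\in J} A_i$. The key structural observation I would prove first is a \emph{submodularity-type inequality}: for index sets $J, K \subseteq \{1,\ldots,I\}$,
$$\codim \mathcal B_{J\cup K} + \codim \mathcal B_{J\cap K} \geqslant \codim \mathcal B_J + \codim \mathcal B_K,$$
which follows from $|J\cup K| + |J\cap K| = |J| + |K|$ together with the inequality $\dim \sum_{i\in J\cup K} A_i + \dim\sum_{i\in J\cap K}A_i \leqslant \dim\sum_{i\in J}A_i + \dim\sum_{i\in K}A_i$; the latter is the standard fact that the dimension of Minkowski sums (equivalently, of the spans of differences) is submodular, because $\Span(\mathcal B_{J\cup K})$ is spanned by $\Span(\mathcal B_J) + \Span(\mathcal B_K)$ and $\Span(\mathcal B_{J\cap K}) \subseteq \Span(\mathcal B_J)\cap\Span(\mathcal B_K)$.

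Next I would isolate the notion in the statement: call a subcollection $\mathcal B_J$ \emph{maximal} if $\codim\mathcal B_J \geqslant \codim \mathcal B_K$ for every $K$. Among all maximal subcollections, let $J_0$ be one with $J_0$ of largest cardinality (or, equivalently, take $J_0$ to be the union of all maximal index sets — I would check using submodularity that this union is again maximal, since if $J,K$ are both maximal then $\codim\mathcal B_{J\cup K}\geqslant \codim\mathcal B_J + \codim\mathcal B_K - \codim\mathcal B_{J\cap K}\geqslant \codim\mathcal B_J$, the last step because $\codim\mathcal B_{J\cap K}\leqslant \codim\mathcal B_J = \codim\mathcal B_K$ by maximality). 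This gives a canonical largest maximal index set $J_0$, and I then claim $\mathcal B_{J_0}$ is essential: if some proper subcollection $\mathcal B_{K}$ with $K\subsetneq J_0$ had $\codim\mathcal B_K \geqslant \codim\mathcal B_{J_0}$, then $K$ would be maximal and strictly smaller, but $K \subseteq J_0$ already, and maximality of $\mathcal B_{J_0}$ forces $\codim\mathcal B_K \leqslant \codim\mathcal B_{J_0}$, hence equality — this does not yet contradict anything, so here I must be more careful: essentiality requires \emph{strict} inequality over proper subcollections, so I should instead take $J_0$ to be a maximal index set that is \emph{minimal} with respect to inclusion among maximal ones, and then any proper subcollection of $\mathcal B_{J_0}$ has strictly smaller codimension by minimality, giving essentiality. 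Existence then also pins down that this essential maximal subcollection has codimension $\geqslant$ that of every subcollection, which is exactly the asserted property.

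For uniqueness, suppose $\mathcal B_{J_0}$ and $\mathcal B_{J_1}$ are both essential and both have codimension at least that of every subcollection; in particular $\codim\mathcal B_{J_0} = \codim\mathcal B_{J_1} =: c$, the common maximum. Apply submodularity to $J_0$ and $J_1$: $\codim\mathcal B_{J_0\cup J_1} + \codim\mathcal B_{J_0\cap J_1} \geqslant 2c$. Since $c$ is the maximum, $\codim\mathcal B_{J_0\cup J_1}\leqslant c$, forcing $\codim\mathcal B_{J_0\cap J_1}\geqslant c$, hence $= c$. But $J_0\cap J_1 \subseteq J_0$, and essentiality of $\mathcal B_{J_0}$ says every proper subcollection has codimension $< c$; therefore $J_0\cap J_1 = J_0$, i.e. $J_0\subseteq J_1$, and symmetrically $J_1\subseteq J_0$, so $J_0 = J_1$.

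The main obstacle I anticipate is getting the essentiality bookkeeping exactly right: the definition demands strict inequality against proper subcollections while the "maximal codimension" condition only gives weak inequality, so the argument must go through a minimal-among-maximal index set rather than a largest one, and one has to confirm (again via the submodular inequality, run in the direction that bounds $\codim\mathcal B_{J\cap K}$ from below) that such a minimal maximal subcollection genuinely dominates \emph{all} subcollections and not merely the maximal ones. Once the submodularity lemma is in hand, the rest is a short extremal argument; verifying the submodularity of $\dim\sum A_i$ — reducing it to the submodularity of dimensions of linear spans — is the only other place requiring a genuine (though standard) check.
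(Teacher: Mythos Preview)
Your argument is correct. The paper does not supply its own proof of this lemma; it is quoted from \cite{E1} as background (the subsection opens with ``We refer the reader to the work \cite{E1} for proofs and details''), so there is nothing to compare against directly. The submodularity route you take---reducing to $\dim L_{J\cup K}+\dim L_{J\cap K}\leqslant\dim L_J+\dim L_K$ for the linear spans $L_J=\sum_{i\in J}\Span(A_i-A_i)$, then choosing an inclusion-minimal subcollection among those of maximal codimension for existence, and using the supermodular inequality on $J_0\cap J_1$ for uniqueness---is the standard and expected argument, and it goes through as written.

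Two small remarks on presentation. First, the detour through the ``largest'' maximal index set is unnecessary and slightly muddies things; going straight to an inclusion-minimal index set among those of maximum codimension gives essentiality immediately. Second, your closing worry that the minimal-maximal subcollection might only dominate the maximal subcollections rather than all of them is unfounded: by construction its codimension equals the global maximum, so it dominates every subcollection automatically---no further appeal to submodularity is needed there.
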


\begin{exa}
The subcollection $(A_1,A_2,A_3)$ of the collection $\mathcal A=(A_1,A_2,A_3,A_4)$ in Example \ref{w_ess} is the unique essential subcollection of the same codimension as $A.$ 
\end{exa}

\begin{defin}\label{quot_coll}
Let $\mathcal A=(A_1,\ldots,A_I)$ be a collection of finite subsets of $\Z^n,$ and let $(A_{i_1},\ldots,A_{i_k})$ be oe of its subcollections. Suppose that the sum $\sum\nolimits_{j=1}^kA_{i_j}$ generates a sublattice $L \subset\Z^n.$ Consider  $L'\subset\Z^n,$ the maximal sublattice of the same dimension as $L$ such that $L\subset L'.$  Denote the projection $\Z^n\twoheadrightarrow\bigslant{\Z^n}{L'}$ 
by $\rho.$ Then the collection of all sets of the
form $\rho(A_i), i \notin \{i_1,\ldots,i_k\}.$ is called the {\it quotient collection.}
\end{defin}

\begin{exa}\label{quot_exa}
Let us come back to the collection $\mathcal A=(A_1,A_2,A_3,A_4)$ of Example \ref{w_ess}. 
Take its the subcollection $(A_1,A_2,A_3).$ In this case, $L=L'=\langle (1,0,0),(0,1,0)\rangle,$ and the quotient collection $(B_1)$ that consists of the set $B_1=\rho(A_2)=\{0,1\}$ in $\bigslant{\Z^3}{L}\simeq\Z$. 
\end{exa}

\begin{defin}
In the notation of Definition \ref{quot_coll}, let $A_{i_1},\ldots,A_{i_k}$ be the essential subcollection of a collection of finite sets $\mathcal A=(A_1,\ldots,A_{I}) \subset \Z^n$  that
has the same codimension as $\mathcal A$, and let $B_1,\ldots,B_{I-k}$ be the quotient collection. The {\it multiplicity} of the collection $\mathcal A$ is denoted by $d(\mathcal A)$ and defined by the following formula: 
$$d(\mathcal A)=|L'/L|(I - k)! \MV(\conv B_1,\ldots, \conv B_{I-k}),$$
where $|L'/L|$ stands for the index of the sublattice $L\subset L'.$ 
\end{defin}

\begin{rem}
If the collection $\mathcal A$ is essential, then the quotient collection is empty. In this case, we follow the convention $\MV(\varnothing)=1.$
\end{rem}

\begin{exa}
Let us come back to Example \ref{quot_exa}. The multiplicity $d(\mathcal A)$ of the collection $\mathcal A$ is equal to $1!~\mathrm{Length}(\conv B_1)=1.$
\end{exa}

\begin{defin}
For a finite set $A\subset \Z^n, $ by $\C^A$ we denote the set of all Laurent
polynomials of the form $\sum\limits_{a\in A} c_at^a$, where $a=(a_1,\ldots,a_n), t=(t_1,\ldots,t_n)$ and $t^a=t_1^{a_1}\cdot\ldots\cdot t_n^{a_n}.$ 
Let $A_1,\ldots,A_I \subset \Z^n$ be finite sets. We define $\Sigma(A_1,...,A_I)$ to be the closure of the set of all collections
$(p_1,\ldots,p_I ) \in \C^{A_1}\oplus\ldots\oplus\C^{A_I}$ such that the set 
$$\{t\in \CC^n \mid p_1(t) = \ldots = p_I (t)=0\} \subset\CC^n$$ is not empty.
\end{defin}

\begin{theor}
Let $\mathcal A=(A_1,...,A_I)$ be a collection of finite sets in $\Z^n.$ The set $\Sigma(A_1,...,A_I)$ is irreducible, and its codimension is equal to the maximum over  the codimensions of all the subcollections of the collection $\mathcal A.$ 
\end{theor}

\begin{exa}\label{codim_s}
For the sets $A_1,A_2$ in Example \ref{ess}, we have $\codim(\Sigma(A_1,A_2))=1.$

The codimension of the set $\Sigma(A_1,A_2,A_3,A_4),$ where $A_1,A_2,A_3,A_4$ are the same as in Example \ref{w_ess}, is also equal to $1.$
\end{exa}

\begin{defin}
For a point $f = (f_1,...,f_I ) \in \Sigma(A_1,...,A_I),$ the $f$-fiber is defined to be the algebraic set $$\{t\in\CC^n \mid f_1(t) = \ldots = f_I (t)=0\}\subset\CC^n.$$	
\end{defin} 

The dimension of the $f$-fiber for a generic $f\in \Sigma(A_1,...,A_I)$ can be computed via the following formula.

\begin{lemma}
There exists a Zariski open subset $S\subset\Sigma(A_1,...,A_I)$ such that the dimension of the $f$-fiber is equal $n -I + \codim( \Sigma(A_1,...,A_I))$ for every $f\in S.$	
\end{lemma}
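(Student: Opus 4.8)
The plan is to compute the generic fiber dimension by reducing to the essential subcollection, where the dimension can be read off directly, and then bootstrapping via the quotient construction. First I would invoke the previous theorem, which gives $\codim \Sigma(A_1,\ldots,A_I) = \max_{\{i_1,\ldots,i_k\}} \big(k - \dim\sum_j A_{i_j}\big)$, the maximum taken over all subcollections. By the uniqueness lemma, this maximum is attained on a distinguished essential subcollection $\mathcal A' = (A_{i_1},\ldots,A_{i_k})$; write $L = \langle \sum_j A_{i_j}\rangle$, let $L'$ be the saturation of $L$, and let $\rho\colon \Z^n \twoheadrightarrow \Z^n/L'$ be the projection, with $B_1,\ldots,B_{I-k}$ the quotient collection. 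The strategy is: (i) analyze the generic fiber of the essential part, (ii) analyze the generic fiber of the quotient collection, (iii) glue the two, using that a generic $f \in \Sigma(\mathcal A)$ factors, up to the $\Sigma$-constraint, as a choice of $f_{i_1},\ldots,f_{i_k}$ vanishing on some point together with arbitrary remaining $f_j$.

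For step (i): when a collection is essential, for generic coefficients the system $f_{i_1} = \cdots = f_{i_k} = 0$ that lies on $\Sigma(\mathcal A')$ has a \emph{finite, nonempty} fiber inside the torus whose lattice of monomials spans $L'$; this is exactly the situation governed by Theorem \ref{num_fib} (the fiber of a resultantal set), and essentiality forces the fiber to be $0$-dimensional in the quotient torus $\CC^k$-worth of directions — equivalently, the fiber sits over a single $(L')^\perp$-coset worth of freedom, giving a fiber of dimension $n - \dim L' = n - k + \codim\mathcal A'$ after accounting for the free torus directions transverse to $L'$. For step (ii): the quotient collection $(B_1,\ldots,B_{I-k})$ lives in $\Z^n/L' \cong \Z^{n-\dim L}$, and for generic $f$ the remaining equations $f_j = 0$, pushed forward under $\rho$, cut out a complete intersection there of the expected dimension $(n - \dim L) - (I-k)$, provided $\codim(B_1,\ldots,B_{I-k}) = 0$ — which holds precisely because $\mathcal A'$ was chosen to be the \emph{maximal}-codimension essential subcollection, so no further codimension can be gained. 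Step (iii) assembles these: the total fiber dimension is $\dim L - \dim L + (n - \dim L - (I-k)) + (\text{free directions}) = n - I + \codim\Sigma(\mathcal A)$, and the locus of $f$ where all these genericity statements hold is a finite intersection of Zariski-open conditions, hence contains a Zariski-open $S$.

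The cleanest way to organize steps (i)–(iii) rigorously is by induction on $I$ together with the observation that fibers behave multiplicatively under the exact sequence $0 \to L' \otimes \C^* \to \CC^n \to (\Z^n/L')\otimes\C^* \to 0$: a point of the fiber is a point of the sub-torus-fiber of the essential part (finitely many cosets) times a point of the quotient-collection fiber. Openness of $S$ follows because "the fiber has the expected dimension" is, by upper-semicontinuity of fiber dimension for the projection from the incidence variety $\{(f,t) : f_i(t) = 0\} \to \Sigma$, the complement of a proper closed subset once we know the expected dimension is achieved at one point — and one point suffices by irreducibility of $\Sigma$ (the previous theorem) and of the incidence variety.

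The main obstacle I expect is step (i): making precise that essentiality of $\mathcal A'$ forces the fiber over the sub-torus associated to $L'$ to be zero-dimensional in the "constrained" directions, and correctly bookkeeping the free torus directions transverse to $L'$ so that the count $n - \dim L'$ plus free directions reassembles correctly. This is exactly the content extracted from \cite{E1} via Theorem \ref{num_fib}, so I would lean on that reference rather than reprove it; the remaining work — the quotient step and the gluing — is then a matter of tracking dimensions through the torus exact sequence and invoking semicontinuity for openness.
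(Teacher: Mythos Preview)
The paper does not prove this lemma: it is one of several background results imported from \cite{E1}, and the subsection explicitly refers the reader there for all proofs. So there is no in-paper argument to compare against.

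Your strategy---reduce to the essential subcollection, use that essentiality forces a finite fiber in the subtorus attached to $L'$, then cut by the remaining $I-k$ equations on each coset via the quotient collection---is the right structural picture and is essentially what one finds in \cite{E1}. Two things need tightening. First, the dimension formula in step~(iii) as written (``$\dim L - \dim L + (n - \dim L - (I-k)) + (\text{free directions})$'') is incoherent; the clean count is: the essential part fixes finitely many cosets of a subtorus of dimension $n-\dim L$, and on each coset the remaining $I-k$ equations drop dimension by exactly $I-k$ because every subcollection of $(B_1,\ldots,B_{I-k})$ has nonpositive codimension (this is where maximality of $\mathcal A'$ is used), yielding $n-\dim L-(I-k)=n-I+\codim\Sigma$. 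Second, ``pushed forward under $\rho$'' is not the right operation: the remaining $f_j$ do not descend to the quotient torus; rather, their restrictions to a fixed coset are polynomials supported on $B_j$, and one needs genericity of the $f_j$ (which are unconstrained by the $\Sigma$-condition once the essential part is fixed) to conclude these restrictions are generic.

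That said, you are working harder than necessary. You already introduce the incidence variety $\mathcal I=\{(f,t):f_i(t)=0\ \forall i\}$ at the end for the openness step; but $\mathcal I$ is a vector bundle over $\CC^n$ of rank $\sum|A_i|-I$, hence irreducible of dimension $n+\sum|A_i|-I$, and its image under projection to $\bigoplus\C^{A_i}$ has closure $\Sigma$. The generic fiber dimension of $\mathcal I\to\Sigma$ is therefore $\dim\mathcal I-\dim\Sigma=n-I+\codim\Sigma$, and upper semicontinuity plus Chevalley's lower bound give the Zariski-open $S$ immediately. This bypasses the essential/quotient decomposition entirely for the dimension statement (that decomposition is genuinely needed for the finer Theorem~\ref{num_fib}, but not here).
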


\begin{exa}
For both of the collections of finite sets considered in Example \ref{codim_s}, the dimension of the $f$-fiber is equal to $0,$ for generic $f.$ At the same time, if instead of any of these two collections we consider their images under an inclusion into some  lattice $\Z^k$ of higher dimension, then the dimension of the corresponding $f$-fiber will be non-zero. 
\end{exa}

Now we are ready to state the main result of this Subsection. 

\begin{theor}\label{num_fib}
	Suppose that a collection $\mathcal A=(A_1,\ldots,A_I)$ of finite sets in $\Z^n$ is weakly essential. Then there exists a Zariski open subset $S\subset \Sigma(A_1,\ldots,A_I )$ such that, for every $f\in S,$ the $f$-fiber is a disjoint
	union of $d(\mathcal A )$ shifted copies of a subtorus of the complex torus $\CC^n.$ 
\end{theor}

\begin{exa}
Applying Theorem \ref{num_fib} to the collection $\mathcal A=(A_1,A_2,A_3,A_4)$ considered in Example \ref{w_ess}, we obtain the following statement: there exists a Zariski open subset $S\subset\Sigma(A_1,A_2,A_3,A_4)$ such that, for every $f=(f_1,f_2,f_3,f_4)\in S,$ the system $\{f_1=f_2=f_3=f_4=0\}$ has exaclty one root. 
\end{exa}

\begin{exa}
For the essential collection considered in Example \ref{1_dim_exa}, we have that there exists a Zariski open subset $S\subset\Sigma(A_1,A_2)$ such that for every $f=(f_1,f_2)\in S,$ the system $\{f_1=f_2=0\}$ has exaclty $2$ roots. 
\end{exa}

\subsection{Fiber Polytopes}
This Subsection contains some basic facts about fiber polytopes, that we will use throughout the paper. For more details we refer the reader to the work \cite{EK}. 
\begin{defin}
Let $\pi\colon\CC^n\to\CC^{n-k}$ be an epimorphism of complex tori, and let  $f_1,\ldots,f_{k+1}$ be Laurent polynomials on $\CC^n$ defining a complete intersection $\tilde{X}=\{f_1=\ldots=f_{k+1}=0\}$ of codimension $k+1.$ Then the Laurent polynomial $\pi_{f_1,\ldots,f_{k+1}}$ defining the image $X\subset\CC^{n-k}$ of $\tilde{X}$ under the epimorphism $\pi$ is called {\it the composite polynomial} of polynomials $f_1,\ldots,f_{k+1}$ with respect to $\pi.$ 
\end{defin}

The composite polynomial is defined uniquely up to a monomial factor. Its Newton polytope is uniquely determined up to a shift according to Theorem \ref{comp} below. The goal of this Subsection is to describe the Newton polytope of the composite polynomial in the case when $\newton(f_1)=\ldots=\newton(f_{k+1})=\Delta.$

Let $\pi^{\times}\colon\Z^{n-k}\hookrightarrow\Z^n$ be the inclusion of character lattices induced by the epimorphism $\pi\colon\CC^n\to\CC^{n-k}.$ We denote $\Delta_j=\newton(f_j)$ for $1\leqslant j\leqslant k+1,$ and ${B=\newton(\pi_{f_1,\ldots,f_{k+1}}).}$ 

\begin{theor}[\cite{EK}]
In the same notation as above, if the polynomials $f_1,\ldots,f_{k+1}$ are Newton-nondegenerate, then for any convex bodies $B_1,\ldots,B_{n-k-1}\subset\Z^{n-k},$ the following equation holds: 
\begin{equation}
(n-k)!\MV(B,B_1,\ldots,B_{n-k-1})=n!\MV(\Delta_1,\ldots,\Delta_{k+1},\pi^{\times}B_1,\ldots,\pi^{\times}B_{n-k-1}).
\end{equation}
\end{theor}

\begin{defin}
Let $L\subset\R^n$ be a codimension $k$ vector subspace, and let $\mu$ and $\mu'$ be volume forms on $\bigslant{\R^n}{L}$ and $L$ respectively. Consider $\Delta_1,\ldots,\Delta_{k+1},$ a tuple of convex bodies in $\R^n.$ A convex body $B\subset L$ is called a {\it composite body} of $\Delta_1,\ldots,\Delta_{k+1}$ in $L,$ if, for every collection $B_1,\ldots,B_{n-k-1}\subset L,$ the following equality holds: 
\begin{equation}\label{vol_composite}
(n-k)!\MV_{\mu'}(B,B_1,\ldots,B_{n-k-1})=n!\MV_{\mu\wedge\mu'}(\Delta_1,\ldots,\Delta_{k+1},B_1,\ldots,B_{n-k-1}).
\end{equation}
\end{defin}

\begin{theor}\label{comp}
In the same notation as above, for any collection of convex bodies $\Delta_1,\ldots,\Delta_{k+1}\subset\R^n$ its composite body exists. Moreover, it is unique up to a shift. 
\end{theor}

We will now give an explicit description of the composite body for the special case ${\Delta_1=\ldots=\Delta_{k+1}=\Delta.}$

\begin{defin}\label{Mint_def}
Let $L\subset\R^n$ be a codimension $k$ vector subspace, and let $\mu$ be a volume form on $\bigslant{\R^n}{L}$. Denote by $p$ the projection $\R^n\twoheadrightarrow \bigslant{\R^n}{L}.$For a convex body $\Delta\subset\R^n,$ the set of all points of the form $\int_{p(\Delta)}s\mu\in\R^n,$ where $s\colon p(\Delta)\to\Delta$ is a continuous section of the projection $p,$ is called the {\it Minkowski integral}, or the {\it fiber body} of $\Delta$ and is denoted by $\int p|_{\Delta}\mu.$ 
\end{defin}

\begin{exa}\label{cylind}
By $p$ we denote the projection $p\colon\R^3\twoheadrightarrow\bigslant{\R^3}{L}.$ Consider $\Delta=Q\times I,$ with $Q\subset L=\R^2$ convex and $I\subset\ker p$ a closed interval. Using Definition \ref{Mint_def}, we immediately obtain $\int p|_{\Delta}\mu=|I|\cdot Q.$ 
\end{exa}

\begin{rem}
Definition \ref{Mint_def} can be visualized as follows. Let $p\colon\R^3\to\R^2$ be the projection forgetting the last coordinate. The Minkowski integral of the convex body $\Delta$ is defined as follows: $\int p|_{\Delta}\mu=\int\Delta(t) dt,$ where $\Delta(t)$ stands for the function which maps every $s\in\R$ to $\Delta\cap\{t=s\}\subset\R^2$ (see Figure 2 below). It is similar to the usual Riemann integral, but in this case, the function takes values in convex sets and the addition operation is the Minkowski sum. The body $\Delta$ can be approximated by cylinders, whose Minkowski integrals were computed in Example \ref{cylind}. So, the integral $\int p|_{\Delta}\mu$ is then the limit of the sums $\sum\limits_{j=0}^{n}(t_{j+1}-t_{j})\Delta(t_j)$ as $n$ goes to infinity.
\begin{center}
\begin{tikzpicture}[scale=0.9]
\draw[->, thick] (-2,0)--(4,0);
\draw[->, thick] (-1.5,-1)--(-1.5,5);
\node[above left] at (-1.6, 5) {$\R$};
\node[above right] at (-1.6, 4.9) {$t$};
\node[below] at (4, 0) {$\R^2$};
\draw[thick, red] (1,2.5) circle (2cm);
\node[above, red] at (1,4.5) {$\Delta$};
\draw[thick, violet] (-1.5,4.5)--(4,4.5);
\draw[thick, violet] (-1.5,4)--(4,4);
\draw[thick, violet] (-1.5,3.5)--(4,3.5);	
\draw[thick, violet] (-1.5,3)--(4,3);	
\draw[thick, violet] (-1.5,2.5)--(4,2.5);
\draw[thick, violet] (-1.5,2)--(4,2);
\draw[thick, violet] (-1.5,1.5)--(4,1.5);
\draw[thick, violet] (-1.5,1)--(4,1);
\draw[thick, violet] (-1.5,0.5)--(4,0.5);
\draw[thick, red] (1,2.5) circle (2cm);
\node[left,violet] at (-1.5,4.5) {$t_0$};
\node[left,violet] at (-1.5,3) {$t_i$};
\node[left,violet] at (-1.5,2.5) {$t_j$};
\node[left,violet] at (-1.5,0.5) {$t_n$};
\node[below] at (0,-1.5) {{\bf Figure 2.} Minkowski integral as the limit of Riemann sums.};
\draw[thick, violet, fill=orange!50!white] (-0.7,3) rectangle (2.7,3.5);
\draw[thick, violet, fill=orange!50!white] (-0.7,1.5) rectangle (2.7,2);
\draw[thick, violet, fill=orange!50!white] (-0.9,2.5) rectangle (2.9,3);
\draw[thick, violet, fill=orange!50!white] (-0.9,2) rectangle (2.9,2.5);
\draw[thick, violet, fill=orange!50!white] (-0.3,3.5) rectangle (2.3,4);
\draw[thick, violet, fill=orange!50!white] (-0.3,1) rectangle (2.3,1.5);
\end{tikzpicture}
\end{center}
\end{rem}

\begin{theor}
In the same notation as above, for arbitrary convex bodies $B_1,\ldots,B_{n-k-1}\subset L$ the convex body $B=(k+1)!\int p|_{\Delta}\mu$ is contained in a fiber of the projection $p$ and satisfies the equality
\begin{equation}
(n-k)!\MV_{\mu'}(B,B_1,\ldots,B_{n-k-1})=n!\MV_{\mu\wedge\mu'}(\underbrace{\Delta,\ldots,\Delta}_{k+1\mbox{~times}},B_1,\ldots,B_{n-k-1}).
\end{equation}
In other words, the convex body $(k+1)!\int p|_{\Delta}\mu,$ up to a shift, is the composite body of $\underbrace{\Delta,\ldots,\Delta}_{k+1\mbox{~times}}.$
\end{theor}

\begin{sledst}
Let $\pi\colon\CC^n\to\CC^{n-k}$ be an epimorphism of complex tori, and let $f_1,\ldots,f_{k+1}$ be a generic tuple of Laurent polynomials on $\CC^n$ such that ${\newton(f_j)=\Delta,~1\leqslant j\leqslant k+1.}$ Consider the complete intersection $\tilde{X}=\{f_1=\ldots=f_{k+1}=0\}\subset\CC^n.$ Then the Newton polytope of the composite polynomial of $f_1,\ldots,f_{k+1}$ with respect to $\pi$ is equal, up to a shift, to $\int p|_{\Delta}\mu.$
\end{sledst}

\begin{theor}\label{faces_fib}
Let $\Delta\subset\R^n$ be a convex polytope, and $\gamma\in L^*$ be a covector. Then the support face $\big(\int p|_{\Delta}\mu\big)^{\gamma}$ coincides with the Minkowski sum $$\sum\limits_{\substack{\beta\in(\R^n)^*\\ \beta|_L=\gamma}}\int p|_{\Delta^{\beta}}\mu.$$
\end{theor}

Definition \ref{Mint_def} implies that if the subspace $L$ is a line with the volume form $dl,$ then the Minkowski integral $\int p|_{\Delta}\mu$ is an interval of length $n!\vol_{\mu\wedge dl}(\Delta).$ Combining this observation with Theorem \ref{faces_fib} we obtain the following result. 

\begin{lemma}\label{volumes}
Let $\Delta\subset\R^n$ be a convex polytope, $P\subset\R^n$ be a $k$-dimensional subspace, and $p$ be the projection $\R^n\to\bigslant{\R^n}{P}.$ Suppose that the face $\Gamma=\big(\int p|_{\Delta}\mu\big)^{\gamma}\subset\int p|_{\Delta}\mu$ is a segment parallel to a line $L\subset P.$ Let $\mu$ and $dl$ be volume forms on $\bigslant{\R^n}{P}$ and $L$ respectively. Then the length of $\Gamma$ equals the sum $$\sum\limits_{\substack{\beta\in(\R^n)^*\\ \beta|_L=\gamma}}\vol_{dl\wedge p^*\mu}(\Delta^{\beta}).$$
\end{lemma}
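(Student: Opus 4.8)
The plan is to combine Theorem~\ref{faces_fib} with the one-dimensional length formula stated in the paragraph just before the lemma (the fact that when the kernel of the projection is a line, the Minkowski integral is a segment whose $dl$-length is the volume of the polytope). First I would apply Theorem~\ref{faces_fib} to the face $\Gamma=\big(\int p|_{\Delta}\mu\big)^{\gamma}$ of the fiber body $\int p|_{\Delta}\mu$. This expresses $\Gamma$ as a Minkowski sum
\[
\Gamma=\sum_{\beta}\int p|_{\Delta^{\beta}}\mu ,
\]
the sum running over the covectors $\beta\in(\R^{n})^{*}$ with $\beta|_{L}=\gamma$ (in the notation of the statement), each summand being the Minkowski integral of the corresponding face $\Delta^{\beta}$ of $\Delta$; each summand, like $\Gamma$ itself, lies in a translate of $P$.

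Next I would use the hypothesis that $\Gamma$ is a segment parallel to $L$. A Minkowski sum of convex bodies lying in translates of $P$ can be a segment parallel to $L$ only if every summand is itself either a segment parallel to $L$ or a single point; moreover $dl$-lengths are additive under Minkowski sums of segments with a common direction, the point summands contributing nothing. Hence the $dl$-length of $\Gamma$ equals the sum over $\beta$ of the $dl$-lengths of the Minkowski integrals $\int p|_{\Delta^{\beta}}\mu$.

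It remains to identify each term. If $\int p|_{\Delta^{\beta}}\mu$ is a segment of positive length, then $p$ must carry $\Delta^{\beta}$ onto a full-dimensional body in $\R^{n}/P$ with all fibers of $p|_{\Delta^{\beta}}$ being segments parallel to $L$; that is, $\Delta^{\beta}$ is a ``prism'' over $\R^{n}/P$ ruled by segments parallel to $L$, and the restriction of $p$ to the affine span of $\Delta^{\beta}$ is a projection whose kernel is the line $L$. Applying the one-dimensional formula to this restriction identifies the $dl$-length of $\int p|_{\Delta^{\beta}}\mu$ with $\vol_{dl\wedge p^{*}\mu}(\Delta^{\beta})$. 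If instead $\int p|_{\Delta^{\beta}}\mu$ is a point, then $p(\Delta^{\beta})$ fails to be full-dimensional in $\R^{n}/P$, or $\Delta^{\beta}$ is not parallel to $L$, and in either case $\vol_{dl\wedge p^{*}\mu}(\Delta^{\beta})=0$. Summing over $\beta$ and adding back the vanishing terms for free yields the asserted formula.

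The step I expect to be the main obstacle is this last one: pinning down precisely which faces $\Delta^{\beta}$ occur as positive-length segments in the Minkowski decomposition of $\Gamma$ (these are the faces of $\Delta$ forming a ``roof'' over $\R^{n}/P$ with ruling direction $L$, cut out exactly by the covectors restricting to $\gamma$), checking that every remaining face $\Delta^{\beta}$ contributes $0$ to both sides and is counted with multiplicity one, and keeping careful track of the normalizing constants of the volume forms so that the $dl$-length of each summand comes out exactly equal to $\vol_{dl\wedge p^{*}\mu}(\Delta^{\beta})$ and not a scalar multiple of it.
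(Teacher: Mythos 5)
Your proposal is correct and follows essentially the same route as the paper: the paper's own (very terse) argument is precisely the combination of Theorem~\ref{faces_fib} with the preceding observation that for a one-dimensional kernel the Minkowski integral is a segment whose length is the corresponding volume, summed over the faces $\Delta^{\beta}$ whose affine spans meet $P$ in a line parallel to $L$. You merely spell out the details the paper leaves implicit (which summands are points versus segments, additivity of lengths under Minkowski sum, and the normalization of the volume forms via the lattice-volume convention).
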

Indeed, the length of the segment $\Gamma$ in the sense of the form $dl$ is equal to the sum of volumes (in the sense of the form ${dl\wedge p^*\mu}$) of the $(n-k+1)$-dimensional faces of $\Delta$ satisfying the following property. Each of those faces is contained in an affine $(n-k+1)$-plane which intersects the plane $P$ along a line parallel to $L.$

\begin{rem}
From now on, the volume form on $\R^n$ that we will use is the lattice volume form, i.e., such that the volume of the standard unit simplex is $1.$
\end{rem}

\begin{exa}
Let $\Delta\subset\R^n$ be the standard simplex of size $d\in\Z_+,$ and $p\colon\R^3\to\R^2$ be the projection forgetting the last coordinate. Then the fiber polytope of $\Delta$ is equal to the standard $2$-dimensional simplex of size $d^2.$ 
\end{exa}

\subsection{The Euler Characteristic of a Complete Intersection}
Here we state one of the key results that we will use in this paper, namely, the formula, which expresses the Euler characteristic of a generic complete intersection in terms of the Newton polytopes of the polynomials defining it. This amazing result was obtained by A.G. Khovanskii in the work \cite{K1} (see Theorem 2, p.44). Before stating it, let us introduce some notation. 
Let $\Delta_1,\ldots,\Delta_n$ be $n$-dimensional polytopes in an $n$-dimensional space. The mixed
volume of these polyhedra is denoted by $\MV(\Delta_1,\ldots,\Delta_n)$. Now let $F(x_1,\ldots,x_k)$ be the Taylor series
of an analytic function of the $k$ variables $x_1, \ldots, x_k$ at the point $0$. We wish to determine the number $F(\Delta_1,\ldots,\Delta_k)$, and we will do is as follows: 
if $F$ is a monomial of degree $n$, namely, $x=x_1^{n_1}\cdot\ldots\cdot x_k^{n_k}, n_1 +\ldots+n_k=n$, then we put
$$F(\Delta_1,\ldots,\Delta_k)=\MV(\underbrace{\Delta_1,\ldots,\Delta_1}_{\text{$n_1$ times}},\ldots,\underbrace{\Delta_k,\ldots,\Delta_k}_{\text{$n_k$ times}}).$$
For monomials $F$ of degrees other than $n,$ we set $F(\Delta_1,\ldots,\Delta_k)=0.$ Then, by linearity, this definition is extended to arbitrary linear combinations of monomials. 

\begin{theor}\cite{K1}\label{chicomplint}
	Let $X$ be a variety defined in $\CC^n$ by a nondegenerate system of equations $f_l = \ldots =
	f_k = 0$ with Newton polyhedra $\Delta_1,\ldots, \Delta_k$. Then, in the same notation as above, we have that $$\chi(X) = \prod\Delta_i(1 + \Delta_i)^{-1}.$$ 
\end{theor}
\begin{exa}\label{chiproj} 
In the same notation as above, if $k=1,$ the variety $X$ is a hypersurface in $\CC^n$ given by a polynomial $f_1$ with the Newton polytope $\Delta_1.$ In this case, by Theorem \ref{chicomplint}, we have $$\chi(X)=(-1)^{n-1}\vol(\Delta_1).$$
\end{exa}

\begin{exa}\label{chici}
Now, suppose that $X$ is a complete intersection in $\CC^n$ of codimension $n-1$ (i.e., a curve), defined by polynomials $f_1, \ldots,f_{n-1}$ with Newton polytopes $\Delta_1,\ldots,\Delta_{n-1}.$ Then using  Theorem \ref{chicomplint}, we obtain the following formula:  $$\chi(X)=-\MV(\Delta_1,\ldots,\Delta_{n-1},\Delta_1+\ldots+\Delta_{n-1}).$$
\end{exa}

\subsection{Forking Paths Singularities}\label{intro_fps}
This subsection is devoted to the so-called {\it forking paths singularities}, introduced in the work \cite{E3}. 

Let $i=(i_1,i_2\ldots)$ be a sequence of integers satisfying the following properties: 
\begin{itemize}
	\item the sequence $i$ stabilizes at $1;$
	\item for every $r\in\N,$ the number $i_{r+1} $ divides $i_r.$ 
\end{itemize}
Given such a sequence $i,$ one can construct another sequence $q=(q_1,q_2\ldots)$ as follows: for every $r\in\N,$ set $q_{r}=\dfrac{i_r}{i_{r+1}}.$

This data can be encoded using a certain system of subsets of a finite set $R$ of $i_1$ elements via the so-called {\it $i$-nested boxes construction}. This construction works level by level as follows. Level $0$ consists of one box -- the set $R$ itself. To construct level $1,$ we divide the elements in $R$ into $q_1$ boxes containing $i_2$ elements each. Level $2$ is then the result of dividing the elements of each of the level $1$ boxes into $q_2$ boxes containing $i_3$ elements each. We continue this operation until we end up with $i_1$ boxes containing an element of $R$ each. The latter will happen in a finite number of steps, since the sequence $i$ stabilizes at $1.$ 

To illustrate the nested boxes construction, we consider the following special case. Let $R$ be the set of complex roots for a polynomial $z^{i_1}-1.$ We put elements $r_1,r_2$ into the same box on level $k,$ if $r_1^{i_{k+1}}=r_2^{i_{k+1}}.$

\begin{exa}\label{boxes}
Consider the sequence $i=(8,4,2,1,...),$ then according to the above-mentioned construction, the roots for the polynomial $z^8-1$ are placed in the following system of nested boxes: 
\begin{equation*}
\begin{rcases*}
\begin{rcases*}
\begin{rcases*}
\boxed{1}\\
\boxed{-1}\\
\end{rcases*}\mbox{level 2}\\
\begin{rcases*}
\boxed{e^{\frac{\pi i}{2}}}\\
\boxed{e^{\frac{3\pi i}{2}}}\\
\end{rcases*}\mbox{level 2}\\
\end{rcases*}\mbox{level 1}\\
\begin{rcases*}
\begin{rcases*}
\boxed{e^{\frac{\pi i}{4}}}\\
\boxed{e^{\frac{5\pi i}{4}}}\\
\end{rcases*}\mbox{level 2}\\
\begin{rcases*}
\boxed{e^{\frac{3\pi i}{4}}}\\
\boxed{e^{\frac{7\pi i}{4}}}\\
\end{rcases*}\mbox{level 2}\\
\end{rcases*}\mbox{level 1}\\
\end{rcases*}\mbox{level 0}\\
\end{equation*}
\end{exa}
Each of the elements of $R$ in the nested-boxes construction has its own {\it address}, i.e. a finite sequence of integers, constructed as follows. The $(k+1)-$th element of the address is the number of the $k-$th level box containing the given element. For any two elements $r_1,r_2$ of $R$ with addresses $(a_1,a_2,\ldots,a_N)$ and $(b_1,b_2,\ldots,b_N)$ one can define the {\it depth} of their relation as the number $\kappa(r_1,r_2)$ equal to the minimal number $K$ such that $a_K\neq b_K.$
\begin{exa}
If we enumerate the boxes from top to bottom on every level in Example \ref{boxes}, the element $1$ has the address $(1, 1, 1, 1),$ while the address of $e^{\frac{\pi i}{2}}$ is $(1,1,2,1).$ The depth $\kappa(1,e^{\frac{\pi i}{2}})$ of their relation is equal to $3.$
\end{exa}

\begin{defin}\label{def_fps}
In the same notation as above, let $i=(i_1,i_2,\ldots)$ be an integer sequence stabilizing at $1$ and such that $i_{r+1}$ divides $i_r$ for every $r.$ With the $i-$nested boxes construction one can associate a plane singularity with $i_1$ distinct regular branches $\varphi_{r_m}\colon(\C,0)\to(\C^2,0)$ indexed by the elements of the set $R$ and such that the intersection number of $\varphi_{r_m}$ and $\varphi_{r_n}$ with $i\neq j$ is equal to $\kappa(r_m,r_n).$ We call this singularity an {\it $i-$forking paths singularity.}
\end{defin}

\begin{utver}\label{milnornumber}\label{chifps}
The Euler characteristic $\chi(i)$ of the Milnor fiber of an $i-$forking paths singularity can be computed using the following formula:  
\begin{equation}
\chi(i)=i_1-i_1\sum\limits_{n=1}^{\infty}(i_n-1).
\end{equation}
\end{utver}

\begin{proof}
First, we independently perturb the branches of the singularity, i.e. in such a way that for every pair of branches, their multiple intersection splits into several transverse intersections. Then the union $U$ of the perturbations has $\sum\limits_{n=1}^{\infty}\dfrac{i_1(i_n-1)}{2}$ nodes. One can prove it by induction on the ``depth'' of the corresponding nested boxes construction. 

{\it Base.} For $i=(i_1,1,\ldots),$ sought number of nodes is equal to $\dfrac{i_1(i_1-1)}{2},$ since in this case, each branch transversely intersects every other branch exactly once. 

{\it Inductive step.} Let $i=(i_1,\ldots,i_k,1,\ldots)$ and $i'=(i_1,\ldots,i_k,i_{k+1},1,\ldots).$ Then the union of the perturbations of an $i'-$forking paths singularity has $\dfrac{i_1(i_{k+1}-1)}{2}=\dfrac{i_1}{i_{k+1}} \dfrac{i_{k+1}(i_{k+1}-1)}{2}$ more nodes then the one for an $i-$forking paths singularity. Indeed, we add $\dfrac{i_1}{i_{k+1}}$ boxes of $i_{k+1}$ elements each, and for every such box, the corresponding branches intersect each other transversely and exactly once, which yields $\dfrac{i_{k+1}(i_{k+1}-1)}{2}$ nodes for each of the boxes. 

The next step is to compute the Euler characteristic of the Milnor fiber of the $i-$forking paths singularity. Here we use a fact which follows from the additive property of Euler characteristic. 
\begin{utver}\label{methodmilnor}
If a perturbation $U$ of an isolated hypersurface singularity $S$ has a fiber with isolated singularities $S_1,\ldots,S_k,$ then we have:
$$\chi(\mathrm{Milnor~Fiber~of~ }S)=\chi(\mathrm{smooth~part~of~} U)+\sum\limits_{j=1}^{k}\chi(\mathrm{Milnor~ Fiber~of~}S_j).$$	
\end{utver}
In our case, the smooth part of $U$ is topologically equivalent to the union of $i_1$ complex planes with $2\sum\limits_{n=1}^{\infty}\dfrac{i_1(i_n-1)}{2}$ points punctured. The Euler characteristic of the Milnor fiber of a node is equal to $0.$
Thus we obtain the desired answer:
\begin{equation}
\chi(i)=i_1-i_1\sum\limits_{n=1}^{\infty}(i_n-1).\end{equation}
\end{proof}

\section{The Main Result}\label{mainresult}
\subsection{Dramatis Person\ae}	
\begin{itemize}
	\item[--] $(x_1,\ldots,x_n,y,t),$ coordinates in $\CC^{n+2},~n\geqslant 1;$
	\item[--] $(e_1,\ldots,e_{n+2}),$ the corresponding coordinate system in the character lattice $\Z^{n+2};$
	\item[--] $A\subset\Z^{n+2},$ a finite subset of maximal dimension;
	\item[--] $\Delta=\conv(A)\subset\R^{n+2},$ the convex hull of the set $A;$
	\item[--] $\mathcal{F}(\Delta),$ the set of all facets of the polytope $\Delta;$
	\item[--] $X_{\Delta},$ the toric variety associated to the polytope $\Delta;$
	\item[--] $f_1,\ldots,f_{n+1}\in\C^A,$ a tuple of polynomials supported at $A;$
	\item[--] $\tilde{\mathcal C}=\{f_1=\ldots=f_{n+1}=0\}\subset\CC^{n+2},$ the complete intersection given by the polynomials $f_1,\ldots,f_{n+1};$
	\item[--] $\pi\colon\CC^{n+2}\to\CC^2,$ the projection forgetting the first $n$ coordinates;
	\item[--] $\mathcal C\subset\CC^2,$ the closure of the image $\pi(\tilde{\mathcal{C}})\subset\CC^2;$
	\item[--] $P\subset\R^2$ the Newton polygon of the curve $\mathcal C;$
	\item[--] $\mathcal S,$ the singular locus of the curve ${\mathcal C}.$

\end{itemize}
\subsection{Statement of the Problem}
In this subsection we give a precise formulation of the question that we address in this paper and discuss all the assumptions we make. For generic $f_1,\ldots,f_{n+1}\in\C^A,$ the complete intersection $\tilde{\mathcal C}=\{f_1=\ldots=f_{n+1}=0\}\subset\CC^{n+2}$ is a smooth curve and the closure $\mathcal C$ of its image under the projection $\pi$ is a plane curve in $\CC^2,$ whose singular locus consists of finitely many isolated singular points. 

It is quite natural to expect that under certain genericity conditions, all the singular points of the curve $\mathcal C$ are nodes. However, it is the case not for all support sets. Moreover, the following example shows that when the support sets $\supp(f_j)=A_j$ do not coincide, one can no longer expect the singular points of the projection of the corresponding complete intersection to be nodes even for generic polynomials $f_j\in\C^{A_j}.$

\begin{exa}\label{notnodal}
Consider $A_1=\{0,1,3\}\subset \Z^1$ and $A_2=\{0,3\}\subset\Z^1.$ Let $f_1(x)$ and $f_2(x)$ be polynomials supported at $A_1$ and $A_2$ respectively. Suppose that the univariate system  $\{f_1(x)=f_2(x)=0\}$ has $2$ distinct roots $r_1,r_2\in\CC.$ Let us show that this system also has another root $r_3\in\CC.$ Indeed, the assumption we made implies that $r_2=\alpha\cdot r_1,$ where $\alpha$ is a root of unity. Substituting these roots into the first equation, we obtain that the linear term of $f_1$ has to be $0.$ But then it is clear that the third root $r_3=\alpha\cdot r_2$ is also a root for the first equation. 
\end{exa}

Therefore, in this paper we address a slightly more general question: namely, we compute the sum of the $\delta$-invariants (for details see \S 10 in \cite{Milnor}) of the singular points of the curve $\mathcal{C}.$ On one hand, this question makes sense for any support sets. On the other hand, if the curve $\mathcal{C}$ only has nodes as singularities, then the answer is exactly the number of those nodes.

\begin{rem}\label{deltarem}
Roughly speaking, the $\delta$-invariant of a plane curve singularity $S$ is a non-negative integer that is equal to the number of double points concentrated at it. Let $S$ be a singularity with $b(S)$ branches. Perturb its branches in such a way that the union $U$ of the perturbations has only nodes as singularities. Denote by $N$ the number of these nodes. By Proposition \ref{methodmilnor}, the Euler characteristic of the Milnor fiber of $S$ is then equal to $b(S)-2N.$ Therefore, the Milnor number of $S$ is given by the following formula: $$\mu(S)=1-b(S)+2N,$$ or, equivalently, $$2N=\mu(s)+b(S)-1.$$ 
The latter then agrees with Milnor formula $2\delta(s)=\mu(s)+b(S)-1,$ which relates the	$\delta$-invariant of the singularity, its Milnor number and the number of its  branches.
\end{rem}

\begin{prb}\label{mainprb}
In the same notation as above, express the sum $\mathcal D$ of the $\delta$-invariants of the singular points of the curve $\mathcal{C}$ in terms of the set $A.$ 
\end{prb}

Let us introduce some more notation that will be used a lot throughout the paper. Let $\tilde{\Lambda}_A\subset\Z^{n+2}$ be the sublattice generated by $A,$ and let $\Lambda_A$ be its image under the projection $\rho\colon\Z^{n+2}\twoheadrightarrow\bigslant{Z^{n+2}}{\langle e_{n+1},e_{n+2}\rangle}.$ Then by $\ind_v(A)$ we denote the index of $\Lambda_A$ in $\bigslant{Z^{n+2}}{\langle e_{n+1},e_{n+2}\rangle}.$

\begin{predpol}
The set $A$ contains $0\in\Z^{n+2}.$
\end{predpol}
This assumption can be made since multiplication by monomial does not change the zero set of the polynomial inside the algebraic torus. At the same time, the resulting support set is a shift of the initial one.
\begin{predpol}\label{indall}
The set $A$ satisfies the following property: $ind_v(A)=1.$
\end{predpol}

\begin{rem}\label{indv}
We can make this assumption due to the following reason. $\Lambda_A$ and $\Lambda=\bigslant{Z^{n+2}}{\langle e_{n+1},e_{n+2}\rangle}$ admit a pair of aligned bases such that $\Lambda=\bigoplus\Z w_i$ and $\Lambda_A=\bigoplus\Z a_i w_i$ for some $a_i\in\Z.$ Performing a monomial change of variables to pass from the basis $(e_1,\ldots,e_{n+2})$ to $(w_1,\ldots,w_{n},e_{n+1},e_{n+2})$ and then another change of variables of the form $\check{x}_i=x_i^{a_i},$ we will reduce our problem to the case $ind_v(A)=1.$	
\end{rem} 

Let $Q=p(\Delta)$ be the image of the polytope $\Delta$ under the projection $\rho\colon\R^{(n+2)}\twoheadrightarrow\bigslant{\R^{(n+2)}}{\langle e_{n+1},e_{n+2}\rangle}.$ 

\begin{defin}
We call a face $\tilde{\Gamma}\subset\Delta$ {\it horizontal}, if its projection is contained in the boundary of $Q$. We denote the set of all horizontal facets of the polytope $\Delta$ by $\mathcal{H}(\Delta).$
\end{defin}

\subsection{Statement of the Main Result}
Let $\Gamma\subset\Delta$ be a non-horizontal facet contained in a hyperplane given by a linear equation of the form $\ell(e_1,\ldots,e_{n+2})=c.$ The function $\ell$ is unique up to a scalar multiple, therefore, one can assume that the coefficients of $\ell$ are coprime integers and that for any $\alpha\in A\setminus\Gamma,~\ell(\alpha)<c.$ 

We now construct a sequence of integers $i^{\Gamma}=(i_1^{\Gamma},i_2^{\Gamma},\ldots)$ as follows. 

Set $B_1^{\Gamma}=A\cap\Gamma.$ For every $r>1,$ we define  $$B_r^{\Gamma}=B_{r-1}^{\Gamma}\cup(A\cap\{\ell(e_1,\ldots,e_{n+2})=c- (r-1)\}),$$ depending on the way $\Delta$ is positioned relative to the hyperplane containing $\Gamma.$ 
Finally, for every $r\geqslant 1,$ we set $$i_r^{\Gamma}=\ind_{v}(B_r^{\Gamma}).$$

It is clear that for every $r,$ the element $i_r^{\Gamma}$ divides $i_{r-1}^{\Gamma}.$ Moreover, since for the set $A$ we have $\ind_v(A)=1,$ any such sequence stabilizes to $1$.

\begin{theor}\label{lemmain}
	Let $A\subset\Z^{n+2}$ be a finite set of full dimension, satisfying Assumption \ref{indall}, and let $\Delta\subset\R^{n+2}$ be its convex hull. In the same notation as above, for generic $f_1,\ldots,f_{n+1}\in\C^A,$ the closure $\mathcal C$ of the image of the curve $\tilde{\mathcal{C}}=\{f_1=\ldots=f_{n+1}=0\}$ under the projection $\pi\colon\CC^{n+2}\to\CC^2$ forgetting the first $n$ coordinates is an algebraic plane curve, whose singular locus $\mathcal S$ consists of isolated singular points. 
	Then the number $\mathcal D=\sum\limits_{s\in\mathcal S}\delta(s)$ can be computed via the following formula:  
	\begin{equation}\label{mainformula1}
	\mathcal D=\dfrac{1}{2}\Bigg(\area(P)-(n+1)\vol(\Delta)+\sum\limits_{\Gamma\in\mathcal H(\Delta)}\vol(\Gamma)-\sum\limits_{\Gamma\in\mathcal{F}(\Delta)\setminus\mathcal{H}(\Delta)}\vol(\Gamma)\sum\limits_1^{\infty}(i_r^{\Gamma}-1)\Bigg),
	\end{equation}
	where $\delta(s)$ is the $\delta$-invariant of the singular point $s$, $P=\int_{\pi}(\Delta),$ the set $\mathcal{F}(\Delta)$ is the set of all facets of the polytope $\Delta$ and $\mathcal{H}(\Delta)$ is the set of all horizontal facets of $\Delta.$
\end{theor}    

\section{Proof of the Main Result}\label{mainproof}
This Section is organized as follows. 

Subsections \ref{s1} and \ref{s2} are devoted to the singular points of the curve $\mathcal C$ at infinity. There we show that for generic $f_1,\ldots,f_{n+1}\in\C^A,$ all the singularities of the curve $\mathcal C$ at infinity are forking path singularities (see Subsection \ref{intro_fps} for details) and compute their contribution to the Euler characteristic of the curve $\mathcal{C}.$ In Subsection \ref{main}, we compare the Euler characteristic of the curves $\tilde{\mathcal{C}}$ and $\mathcal{C}$ to deduce the main result of this paper -- Theorem \ref{lemmain}.

\subsection{Order of Contact Between Branches of the Curve $\mathcal C$ at Infinity}\label{s1}
We first make a very useful technical assumption that will simplify the proof of Theorem \ref{lemmain}.

\begin{predpol}\label{primitive}
	For every primitive covector $\gamma$ such that $\Delta^{\gamma}\subset\Delta$ is a facet, its image under the projection forgetting the first $n$ coordinates is also primitive. 
\end{predpol}

\begin{exa}\label{simpl1}
	Consider ${A=\{(0,0,0),(0,1,0),(0,0,1),(1,0,0),(2,0,0)\}}\subset\Z^3$ (see Figure 3 below).
	The covector $\gamma=(1,2,2)\in(\R^3)^*$ is supported at the facet $\Gamma_0=\conv(\{(0,1,0),(0,0,1),(2,0,0)\})$ (hatched orange) of the polytope $\Delta=\conv(A).$ The covector $\gamma$ is primitive, while its projection $(2,2)$ is not. At the same time, the projection of the primitive covector supported at the facet $\Gamma_1=\conv(\{(0,0,0),(0,0,1),(2,0,0)\})$ (hatched blue) is primitive. 
	\begin{center}
		\begin{tikzpicture}[scale=0.9]
		\pattern[pattern=north east lines, pattern color=orange]  (1,0)--(2,1)--(0,5)--(1,0);
		\pattern[pattern=north west lines, pattern color=blue]  (0,1)--(1,0)--(0,5)--(0,1);
		\draw[ultra thick] (0,1)--(0,5);
		\draw[ultra thick] (0,1)--(1,0);
		\draw[ultra thick] (0,5)--(1,0);
		\draw[ultra thick] (0,5)--(2,1);
		\draw[ultra thick] (2,1)--(1,0);
		\draw[dashed, ultra thick] (0,1)--(2,1);
		\draw[fill,red] (0,1) circle [radius=0.1];
		\draw[fill,red] (2,1) circle [radius=0.1];
		\draw[fill,red] (1,0) circle [radius=0.1];
		\draw[fill,red] (0,5) circle [radius=0.1];
		\draw[fill,red] (0,3) circle [radius=0.1];
		\node[below] at (1,-0.3) {{\bf Figure 3.} A polytope that does not satisfy Assumption \ref{primitive}.};
		\node[right] at (0,5) {(2,0,0)};
		\node[left] at (0,3) {(1,0,0)};
		\node[left] at (0,1) {(0,0,0)};
		\node[right] at (2,1) {(0,0,1)};
		\node[right] at (1,0) {(0,1,0)};
		\end{tikzpicture}
	\end{center}
\end{exa}

\begin{rem}\label{primitive_exp}
We make this purely technical assumption to be able to perform certain monomial changes of variables that would preserve horizontality of facets in $\Delta.$ Moreover, as we will further see, this assumption guarantees that the projection $\mathcal C$ of the complete intersection $\tilde{\mathcal C}$ has only forking--path singularities at infinity (see Lemma \ref{fpsinfinity}  for details). Also note that one can always reduce the computation to the case when the polytope $\Delta$ satisfies Assumption \ref{primitive}. For instance, consider a monomial change of variables $\check{x}_i=x_i,~\check{y}^{N!}=y,~\check{t}^{N!}=t,$ for $N=k!,$ where $k=\max_{\Gamma\subset\Delta}(\ind_v(A\cap\Gamma)).$ The polytope $\check{\Delta}$ clearly satisfies the desired condition. 
	
At the same time, inside the torus $\CC^2,$ the projection of the new complete intersection $\check{\mathcal C}$ defined by polynomials $\check{f_1},\ldots,\check{f}_{n+1}$ with the Newton polytope $\check{\Delta}$ can be viewed as an $(N!)^2$-covering of the initial curve $\mathcal C$, therefore, to find the number $\mathcal D$ for the curve $\mathcal C$, one can just find this number for the projection of the curve $\check{\mathcal C}$ and divide it by $(N!)^2$. 
	
Moreover, we will later see that the answer does not depend on the choice of $N,$ therefore our main result (Theorem \ref{lemmain}) holds true without this assumption.
\end{rem}

\begin{exa}\label{simp2}
	Let us come back to $\Delta$ from Example \ref{simpl1}. The monomial change $\check{x}_1=x_1,~\check{t}^{2!}=t,~\check{y}^{2!}=y$ reduces our problem to a much easier case $\check{\Delta}=\conv(\{(0,0,0),(2,0,0),(0,2,0),(1,0,0),(0,0,2)\})$ (see Figure 4).
	\begin{center}
		\begin{tikzpicture}
		\pattern[pattern=north east lines, pattern color=orange] (1,0)--(2,1)--(0,5)--(1,0);
		\pattern[pattern=north east lines, pattern color=orange] (8,-1)--(10,1)--(6,5)--(8,-1);
		\draw[ultra thick] (0,1)--(0,5);
		\draw[ultra thick] (0,1)--(1,0);
		\draw[ultra thick] (0,5)--(1,0);
		\draw[ultra thick] (0,5)--(2,1);
		\draw[ultra thick] (2,1)--(1,0);
		\draw[ultra thick] (6,1)--(6,5);
		\draw[ultra thick] (6,1)--(8,-1);
		\draw[ultra thick] (6,5)--(8,-1);
		\draw[ultra thick] (6,5)--(10,1);
		\draw[ultra thick] (10,1)--(8,-1);
		\draw[dashed, ultra thick] (0,1)--(2,1);
		\draw[dashed, ultra thick] (6,1)--(10,1);
		\draw[fill,red] (0,1) circle [radius=0.1];
		\draw[fill,red] (2,1) circle [radius=0.1];
		\draw[fill,red] (1,0) circle [radius=0.1];
		\draw[fill,red] (0,5) circle [radius=0.1];
		\draw[fill,red] (0,3) circle [radius=0.1];
		\draw[fill,red] (6,1) circle [radius=0.1];
		\draw[fill,red] (10,1) circle [radius=0.1];
		\draw[fill,red] (8,-1) circle [radius=0.1];
		\draw[fill,red] (6,5) circle [radius=0.1];
		\draw[fill,red] (6,3) circle [radius=0.1];
		\draw [ultra thick, fill=white] (8,1) circle [radius=0.1];
		\draw [ultra thick, fill=white] (8,3) circle [radius=0.1];
		\draw [ultra thick, fill=white] (9,0) circle [radius=0.1];
		\draw [ultra thick, fill=white] (7,2) circle [radius=0.1];
		\draw [ultra thick, fill=white] (7,0) circle [radius=0.1];
		\node[below] at (1,-0.5) {{\bf Figure 4.} The polytopes $\Delta$ and $\check{\Delta}.$};
		\node[right] at (0,5) {(2,0,0)};
		\node[left] at (0,3) {(1,0,0)};
		\node[left] at (0,1) {(0,0,0)};
		\node[right] at (2,1) {(0,0,1)};
		\node[right] at (1,0) {(0,1,0)};
		\node[right] at (6,5) {(2,0,0)};
		\node[left] at (6,3) {(1,0,0)};
		\node[left] at (6,1) {(0,0,0)};
		\node[right] at (10,1) {(0,0,2)};
		\node[right] at (8,-1) {(0,2,0)};
		\end{tikzpicture}
	\end{center}
	After the change of variables, the primitive covector supported at $\check{\Gamma}$ (hatched orange) is equal to $(1,1,1),$ therefore, its projection $(1,1)$ is also primitive. So, Assumption \ref{primitive} is now satisfied.
\end{exa}

\begin{utver}\label{vert_change}
Let $\Gamma=\Delta^{\gamma}\subset\Delta$ be a non-horizontal facet. Then, under Assumption \ref{primitive}, there exists a basis $(h_1,\ldots,h_{n+2})$ in $\Z^{n+2}$ satisfying the following conditions: 
\begin{itemize}
\item passing from the basis $(e_1,\ldots,e_{n+2})$ to $(h_1,\ldots,h_{n+2})$ preserves the set $\mathcal{H}(\Delta)$ of the horizontal facets of $\Delta;$
\item in the basis $(h_1,\ldots,h_{n+2})$ the facet $\Gamma$ is ``vertical'', i.e., parallel to the coordinate hyperplane $\{h_{n+2}=0\}.$ 
\end{itemize}
\end{utver}

\begin{proof}
 Indeed, let $(\check{e}_1,\ldots,\check{e}_{n+2})$ be the basis of $(\Z^{n+2})^{*}$ dual to the basis $(e_1,\ldots,e_{n+2})$ of the lattice $\Z^{n+2}.$ Consider the primitive covector $\gamma\in(\Z^{n+2})^{*}$ that is supported at the facet $\tilde{\Gamma}.$ Assumption \ref{primitive} implies that the image of $\gamma$ under the projection forgetting the first $n$ coordinates is primitive, therefore, together with some integer covector $\tau=\lambda_1\check{e}_{n+1}+\lambda_2\check{e}_{n+2},$ it forms a basis of the $2$-dimensional lattice $\langle\check{e}_{n+1},\check{e}_{n+2}\rangle. $ We construct a new basis $(\check{h}_1,\ldots,\check{h}_{n+2})$ of $(\Z^{n+2})^*$ as follows. We set 
$$
\check{h}_i=
\begin{cases}
\check{e}_i,~\text{ if } 1\leqslant i\leqslant n\\
\tau~\text{ if } i=n+1\\
\gamma~\text{ if } i=n+2.
\end{cases}
$$
Let $M$ be the corresponding transition matrix. The monomial change of variables given by its transpose $M^T$ does preserve horizontality of faces of $\Delta.$ Moreover, under this change of variables the facet $\tilde{\Gamma}$ becomes ``vertical'', since in the new coordinate system $(h_1,\ldots, h_{n+2})$ it is parallel to the coordinate hyperplane $\{h_{n+2}=0\}.$ 
\end{proof}

\begin{exa}\label{simp3}
Consider the support set $\check{A}=\{(0,0,0),(1,0,0),(2,0,0),(0,2,0),(0,0,2)\}$ and the polytope $\check{\Delta}=\conv{\check{A}}$ from Example \ref{simp2} (see Figure 5 below). The monomial change of variables $x\mapsto\tilde{x}\tilde{t},~y\mapsto\tilde{t},~t\mapsto \tilde{y}\tilde{t}$ turns a polynomial of the form $f(x,y,t)=a+by^2+ct^2+dx+ex^2$ into the polynomial $\tilde{f}(\tilde{x},\tilde{y},\tilde{t})=a+b\tilde{t}^2+c\tilde{y}^2\tilde{t^2}+d\tilde{x}\tilde{t}+e\tilde{x}^2\tilde{t}^2.$ Under this change of variables the facet $\Gamma=\conv(\{(2,0,0),(0,2,0),(0,0,2)\})\subset\check{\Delta}$ becomes a ``vertical'' facet $\tilde{\Gamma}=\conv(\{(2,0,2),(0,0,2),(0,2,2)\})\subset\tilde{\Delta}.$
\begin{center}
\begin{tikzpicture}
\pattern[pattern=north east lines, pattern color=orange] (2,-1)--(4,1)--(0,5)--(2,-1);
\pattern[pattern=north west lines, pattern color=orange] (12,1)--(14,-1)--(12,5)--(12,1);
\draw[dashed, ultra thick] (0,1)--(4,1);
\draw[ultra thick] (0,1)--(0,5);
\draw[ultra thick] (0,1)--(2,-1);
\draw[ultra thick] (0,5)--(2,-1);
\draw[ultra thick] (0,5)--(4,1);
\draw[ultra thick] (4,1)--(2,-1);
\draw[ultra thick] (8,1)--(14,-1);
\draw[ultra thick] (8,1)--(12,5);
\draw[ultra thick] (12,5)--(14,-1);
\draw[dashed, ultra thick] (8,1)--(12,1);
\draw[dashed, ultra thick] (12,1)--(14,-1);
\draw[dashed, ultra thick] (12,1)--(12,5);
\draw[fill,red] (0,1) circle [radius=0.1];
\draw[fill,red] (4,1) circle [radius=0.1];
\draw[fill,red] (2,-1) circle [radius=0.1];
\draw[fill,red] (0,5) circle [radius=0.1];
\draw[fill,red] (0,3) circle [radius=0.1];
\draw[fill,red] (8,1) circle [radius=0.1];
\draw[fill,red] (12,1) circle [radius=0.1];
\draw[fill,red] (12,5) circle [radius=0.1];
\draw[fill,red] (14,-1) circle [radius=0.1];
\draw[fill,red] (10,3) circle [radius=0.1];
\draw [ultra thick, fill=white] (2,1) circle [radius=0.1];
\draw [ultra thick, fill=white] (2,3) circle [radius=0.1];
\draw [ultra thick, fill=white] (3,0) circle [radius=0.1];
\draw [ultra thick, fill=white] (1,2) circle [radius=0.1];
\draw [ultra thick, fill=white] (1,0) circle [radius=0.1];
\draw [ultra thick, fill=white] (10,1) circle [radius=0.1];
\draw [ultra thick, fill=white] (13,0) circle [radius=0.1];
\draw [ultra thick, fill=white] (11,0) circle [radius=0.1];
\draw [ultra thick, fill=white] (12,3) circle [radius=0.1];
\draw [ultra thick, fill=white] (13,2) circle [radius=0.1];
\node[below] at (8,-1) {{\bf Figure 5.} Making a facet ``vertical''.};
\node[right] at (0,5) {(2,0,0)};
\node[left] at (0,3) {(1,0,0)};
\node[left] at (0,1) {(0,0,0)};
\node[right] at (4,1) {(0,0,2)};
\node[right] at (2,-1) {(0,2,0)};
\node[above left] at (12,1) {(0,0,2)};
\node[left] at (8,1) {(0,0,0)};
\node[left] at (10,3) {(1,0,1)};
\node[right] at (12,5) {(2,0,2)};
\node[right] at (14,-1) {(0,2,2)};
\end{tikzpicture}
\end{center}
\end{exa}

\begin{lemma}\label{inf_br}
For generic $f_1,\ldots, f_{n+1}\in\C^A,$ every limiting point $p$ of the curve $\mathcal C$ in the toric variety $X_P$ is contained in the $1$-dimensional orbit $\mathcal O_{\Gamma}$ associated to some edge $\Gamma\subset P.$ Moreover, under Assumption \ref{primitive}, in a small neighborhood of $p$ every branch of the curve $\mathcal C$ is a smooth curve that intersects $\mathcal O_{\Gamma}$ transversally. 
\end{lemma}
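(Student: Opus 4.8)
\textbf{Proof proposal for Lemma \ref{inf_br}.}

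The plan is to analyze the behaviour of the curve $\mathcal C$ near the toric boundary of $X_P$ by looking one edge of $P$ at a time, using the Bernstein--Kouchnirenko machinery together with the description of the Newton polygon $P$ of the composite polynomial provided by Lemma \ref{volumes} and Theorem \ref{faces_fib}. First I would fix an edge $E\subset P$ with primitive inner normal covector $\gamma\in(\Z^2)^*$, and recall that the limiting points of $\mathcal C$ lying in the orbit $\mathcal O_E$ are exactly the roots in $\mathcal O_E\cong\CC$ of the truncation $g^{\gamma}$ of the composite polynomial $g$ with respect to $\gamma$. Since for generic $f_i$ the composite polynomial $g$ is nondegenerate in the Newton sense, $g^{\gamma}$ has only simple roots, none of them at $0$ or $\infty$; so there are no limiting points on the $0$-dimensional orbits (those correspond to vertices of $P$), which gives the first assertion. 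I would then note that the coordinates of each such limiting point $p$ are determined, through the epimorphism $\pi$ and the correspondence between $\tilde{\mathcal C}$ and $\mathcal C$, by the truncated system $\{f_1^{\beta}=\dots=f_{n+1}^{\beta}=0\}$ for the covectors $\beta\in(\R^{n+2})^*$ with $\beta|_{\langle e_{n+1},e_{n+2}\rangle}=\gamma$; this is the same bookkeeping that underlies Lemma \ref{volumes}.

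Next, to study a single branch of $\mathcal C$ through $p$, I would perform the monomial change of variables from Proposition \ref{vert_change}: choosing $\gamma$ (lifted as in that proposition) to be the last coordinate, the facets $\tilde\Gamma=\Delta^{\beta}$ contributing to $p$ become ``vertical'', i.e. parallel to $\{h_{n+2}=0\}$, and horizontality of facets — hence the shape of $P$ near $E$ — is preserved. In these coordinates $\mathcal C$ is the image under $\pi$ of the complete intersection cut out by $f_1,\dots,f_{n+1}$, and a branch of $\mathcal C$ at $p$ is parametrized by restricting to the corresponding subfamily of solutions of the truncated system and letting the vertical coordinate $\to 0$. The key point to extract is that, because Assumption \ref{primitive} makes the projection of $\gamma$ primitive, the intersection of (the closure of) such a branch with $\mathcal O_E$ is \emph{transversal} and the branch itself is \emph{smooth}: this follows by writing the local equation of $\mathcal C$ near $\mathcal O_E$ in the toric chart adapted to $E$, observing that the lowest-order part in the uniformizing parameter is linear precisely because $g^{\gamma}$ has simple roots (nondegeneracy) and the lattice index along $\gamma$ is $1$ (primitivity), so no ramified local parametrization $t\mapsto t^m$, $m>1$, can occur.

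I would organize the argument so that the genericity of $f_1,\dots,f_{n+1}$ is invoked exactly twice: once to guarantee Newton-nondegeneracy of the composite polynomial $g$ (so $g^{\gamma}$ is squarefree and vanishes nowhere on the torus-orbit structure except at finitely many simple points), and once to guarantee that the truncated subsystems governing each limiting point $p$ are themselves nondegenerate and of the expected (zero-dimensional, after the relevant quotient) type, so Theorem \ref{num_fib} applies and the local parametrization is unramified. The main obstacle I anticipate is the last step — proving smoothness and transversality of each individual branch, as opposed to merely counting limiting points — because this requires making precise the local toric chart near $\mathcal O_E$, identifying the uniformizing parameter with a monomial in the original coordinates, and checking that Assumption \ref{primitive} forces this monomial to appear to the first power in the leading term of the local equation of $\mathcal C$. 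Everything else (irreducibility/dimension of resultantal sets, the fiber-polytope description of $P$, nondegeneracy for generic coefficients) is quotable from Section \ref{preliminaries}.
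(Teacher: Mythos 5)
There is a genuine gap, and it sits exactly at the step you yourself flagged as the key one. Your argument for smoothness and transversality of each branch rests on the claim that, for generic $f_1,\ldots,f_{n+1}\in\C^A$, the composite polynomial $g$ is Newton-nondegenerate, so that its truncation $g^{\gamma}$ along an edge of $P$ is squarefree and the local equation of $\mathcal C$ near $\mathcal O_{\Gamma}$ has linear lowest-order part. This premise is false in general, and it is false precisely in the situations the paper is built to handle: genericity of the $f_i$'s in $\C^A$ does not make $g$ generic among polynomials with Newton polygon $P$, and the truncation $g^{\gamma}$ has roots of multiplicity $\ind_v(A\cap\tilde{\Gamma})$, which can exceed $1$ even under Assumption \ref{primitive} (see Example \ref{ord_cont}, where each limiting point of $\mathcal C$ on the orbit has two preimages and two branches of $\mathcal C$ through it, with $i^{\Gamma}=(2,1,\ldots)$; similarly the last example of the paper has $i^{\Gamma}=(3,1,\ldots)$ and $(3,3,1,\ldots)$). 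If your premise held, $\mathcal C$ would be smooth at infinity, there would be no forking-paths singularities, and the terms $\sum_{\Gamma}\vol(\Gamma)\sum_r(i_r^{\Gamma}-1)$ in the main formula would vanish identically — so the argument cannot be repaired by a better choice of generic coefficients. Note also that Assumption \ref{primitive} concerns only the primitivity of the \emph{projected} normal covector; it does not force $\ind_v(A\cap\tilde{\Gamma})=1$, so it cannot rescue the "lattice index along $\gamma$ is $1$" part of your reasoning. The statement of the lemma is deliberately about each individual \emph{branch} being smooth and transverse to $\mathcal O_{\Gamma}$; the curve $\mathcal C$ itself is typically singular at the limiting point $p$, which is incompatible with reading smoothness off a squarefree truncation of $g$.

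The paper's route avoids the composite polynomial altogether at this step: each branch of $\mathcal C$ near $p$ is the $\pi$-image of the intersection of $\tilde{\mathcal C}$ with a neighborhood of a limiting point $q$ on the orbit $\mathcal O_{\tilde{\Gamma}}\subset X_{\Delta}$ of a facet $\tilde{\Gamma}$ parallel to $\Gamma$ (this, together with the first assertion, is quoted from Theorem 4.10 of \cite{EK}); for generic coefficients $\tilde{\mathcal C}$ is smooth near $q$ and meets $\mathcal O_{\tilde{\Gamma}}$ transversally; and Assumption \ref{primitive}, via Proposition \ref{vert_change}, supplies a monomial change of variables making $\tilde{\Gamma}$ vertical while preserving horizontality, after which $\pi$ is literally the coordinate projection $\C^{n+2}\to\C^2$ sending $\mathcal O_{\tilde{\Gamma}}$ to $\mathcal O_{\Gamma}$. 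The lemma then reduces to the trivial fact that the coordinate projection of a smooth curve transverse to a vertical coordinate hyperplane is a smooth curve transverse to the corresponding axis — transversality is pushed \emph{down} from $X_{\Delta}$, branch by branch, rather than read off downstairs from nondegeneracy of $g$. Your proposal correctly identifies Proposition \ref{vert_change} as the place where Assumption \ref{primitive} enters, but to close the gap you must replace the appeal to squarefreeness of $g^{\gamma}$ by this push-forward of smoothness and transversality from the complete intersection upstairs (your first assertion is fine in conclusion, since the closure of $\mathcal C$ misses the torus-fixed points because the vertex coefficients of $g$ are nonzero, but its justification should likewise not invoke nondegeneracy of $g$).
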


\begin{proof}
The first statement is a corollary of Theorem 4.10 in \cite{EK}. Moreover, this result implies that in a small neighborhood of $p,$ each branch of the curve $\mathcal C$ is the projection of the intersection of the curve $\tilde{\mathcal C}$ with a small neighborhood of some limiting point $q$ in the orbit $\mathcal O_{\tilde{\Gamma}}\subset X_{\Delta},$ where $\tilde{\Gamma}\subset \Delta$ is some facet parallel to the edge $\Gamma.$ It is well known that for generic $f_1,\ldots, f_{n+1}\in\C^A,$ the complete intersection $\tilde{\mathcal C}$ in a small neighborhood of $q$ is smooth and intersects $\mathcal O_{\tilde{\Gamma}}$ transversally. 

Let us now assume that the edge $\Gamma\subset P$ is contained in a coordinate axis, and the facet $\tilde{\Gamma}$ is ``vertical'' and is contained in the coordinate hyperplane whose image under the projection is that coordinate axis. In this setting, the torus $\CC^2\subset X_{P}$ and the orbit $\mathcal O_{\Gamma}$ can be considered as dense subsets of the plane $\C^2$ together with one of its coordinate axes respectively, while the torus $\CC^{n+2}\subset X_{\Delta}$ and the orbit $\mathcal O_{\tilde{\Gamma}}$ can be considered as dense subsets of $\C^{n+2}$ together with one of its coordinate hyperplanes. Therefore, $\pi$ is just the projection from $\C^{n+2}$ to $\C^2$ forgetting the first $n$ coordinates, which maps the torus $\CC^{n+2}$ to the torus $\CC^2,$ the orbit $\mathcal O_{\tilde{\Gamma}}$ to the orbit  $\mathcal O_{\Gamma}$ and the point $q$ to the point $p.$ Thus, under this assumption, the statement of the lemma is reduced to the following trivial statement:
if a curve $C\subset \C^{n+2}$ is smooth and intersects a ``vertical'' coordinate hyperplane transversally at the point $q,$ then its image $\pi(C)$ is a smooth curve which transversally intersects the corresponding coordinate axis at the point $p=\pi(q).$ 

Finally, due to Assumption \ref{primitive}, the general statement of the lemma can be reduced to the special case considered above by applying a certain monomial change of variables preserving horizontality of facets of $\Delta.$ The existence of such a change of variables follows from Proposition \ref{vert_change}.
\end{proof}

\begin{exa}
	Let $\pi\colon\CC^2\to\CC$ be the projection given by $\pi\colon (x,y)\mapsto y,$ and consider the set $A=\{(0,0),(2,0),(0,1)\}.$ Its convex hull $\Delta$ does not satisfy Assumption \ref{primitive}. Take a polynomial $f$ with $\supp(f)=A.$ Up to a scalar multiple, its truncation $f^{\Gamma}$ where $\Gamma=\conv(\{(2,0),(0,1)\})$ is a polynomial of the form $\alpha x^2-y,$ so the intersection of the branches of the curve $C$ defined by $f$ and the $1-$dimensional orbit of $X_{\Delta}$ corresponding to $\Gamma$ is not transverse. Now, consider the change of variables $\check{x}=x, \check{y}=y^2.$ Then the truncation $\check{f}^{\check{\Gamma}}$ after this change of variables is the polynomial $\alpha\check{x}^2-\check{y}^2,$ whose zero set is a union of two lines $\{\sqrt{\alpha}\check{x}\pm\check{y}\}$ that do intersect the corresponding orbit of the toric variety $X_{\check{\Delta}}$ transversally.  Note that, inside the torus $\CC^2,$ the curve $\check{C}$ defined by $\check{f}$ is a $2$-covering of the curve $C$ defined by $f.$
\end{exa}

Consider a $1$-dimensional orbit $\mathcal O_{\Gamma}$ of the toric variety $X_P$ containing multiple points of the closure of the curve $\mathcal{C}.$ Let $\Gamma\subset P$ be the corresponding edge of the polygon $P$ and $\gamma\in(\Z^2)^*$ be the primitive covector supported at $\Gamma.$ 

\begin{predpol}\label{facets}
For any pair of facets $\tilde{\Gamma}=\Delta^{\gamma_1},\tilde{\Gamma}'=\Delta^{\gamma_2},~\gamma_1\neq\gamma_2$ parallel to the edge $\Gamma$, we have $$\pi(\{f_1^{\tilde{\Gamma}}=\ldots=f_{n+1}^{\tilde{\Gamma}}=0\})\cap\pi(\{f_1^{\tilde{\Gamma}'}=\ldots=f_{n+1}^{\tilde{\Gamma}'}=0\})=\varnothing.$$
\end{predpol}

\begin{utver}\label{diff_facets}
The tuples of polynomials $f_1,\ldots,f_{n+1}\in\C^A$ satisfying Assumption \ref{facets} form an everywhere dense Zariski open subset.
\end{utver}

\begin{proof}
It is clear that this set is Zariski open. Therefore, one just has to show that it is not empty. Indeed, take a tuple of polynomials that does not satisfy the assumption. Since  $\tilde{\Gamma}\neq\tilde{\Gamma'},$ there exists a monomial $f$ belonging to the set $A\cap\tilde{\Gamma}$ but not to the set $\tilde{\Gamma'}.$ Perturbing its coefficient in one of the polynomials $f_j$, for instance, in $f_{1},$ we do not change any of the roots for the truncated system $\{f_1^{\tilde{\Gamma}'}=\ldots=f_{n+1}^{\tilde{\Gamma}'}=0\}.$ At the same time, this system has only finitely many roots, and, thus, for generic $\varepsilon,$ the projections of the roots for the system $\{f_1^{\tilde{\Gamma}}+\varepsilon\cdot f=\ldots=f_j^{\tilde{\Gamma}}=\ldots=f_{n+1}^{\tilde{\Gamma}}=0\}$ do not coincide with those of the roots for $\{f_1^{\tilde{\Gamma}'}=\ldots=f_{n+1}^{\tilde{\Gamma}'}=0\}.$
\end{proof}

The integer length of an edge $\Gamma=P^{\gamma}\subset P$ equals the sum of the integer volumes of all the facets $\tilde{\Gamma}\subset\Delta$ that are parallel to $\Gamma$  (see Lemma \ref{volumes} for details). 
Proposition \ref{diff_facets} implies that, for generic $f_1,\ldots,f_{n+1}\in\C^A,$ each of these facets contributes to its own subset of multiple points in the orbit $\mathcal O_{\Gamma}\subset X_{P},$ and, for any pair of facets $\tilde{\Gamma}_1, \tilde{\Gamma}_2$ parallel to $\Gamma,$ the corresponding subsets do not intersect. Therefore, without loss of genericity, one can consider the case when there is a unique facet $\tilde{\Gamma}\subset\Delta,$ such that the projection of the primitive covector supported at it coincides with $\gamma.$ In this setting, the polynomial $h^{\Gamma}$ has $\vol(\tilde{\Gamma})$ roots if counted with multiplicities, and those multiplicities are equal to $\ind_v(A\cap\tilde{\Gamma})$  (for details, see Theorem 5.3 in \cite{M}).

Let us also suppose that the facet $\tilde{\Gamma}$ is ``vertical'', i.e., the restrictions $f_1^{\tilde{\Gamma}},\ldots,f_{n+1}^{\tilde{\Gamma}}$ are polynomials in $x_1,\ldots,x_n, y.$ Due to Assumption \ref{primitive}, one can always achieve that by performing a certain monomial change of variables preserving horizontality of faces of the polytope $\Delta$ (see Proposition \ref{vert_change}).

Let $p$ be a multiple point of intersecton of the closure of the curve $\mathcal C$ with the $1$-dimensional orbit $\mathcal O_{\Gamma}$ of the toric variety $X_{P}$ corresponding to the edge $\Gamma.$ Lemma \ref{inf_br} implies that in a small neighborhood of the point $p,$ the curve $\mathcal C$ is a union of smooth branches intersecting $\mathcal O_{\Gamma}$ transversally. 
We will now compute the order of contact between any two of those branches in terms of the support set $A.$ 

In the special case we are now considering, the polynomials $f_i,~1\leqslant\ldots i\leqslant n+1,$ can be written in the following form: $f_i=g_i(x_1\ldots,x_n,y)+\sum_{t=1}^{\infty} t^m\tilde{g}_{i,m}(x_1\ldots,x_n,y),$ where $g_i=f_i^{\tilde{\Gamma}}(x_1,\ldots,x_n,y),$ and $\tilde{g}_{i,m}$ are polynomials in the variables $x_1,\ldots,x_n,y.$ 

\begin{exa}\label{slice}
Consider the support set $A=\{(0,0,0),(1,0,0),(2,0,0),(0,2,0),(0,0,2)\}$ and the polytope $\Delta=\conv(A)$ (see Figure 6 below).
 
\begin{center}
	\begin{tikzpicture}
	\pattern[pattern=north west lines, pattern color=orange] (0,1)--(2,-1)--(0,5)--(0,1);
	\draw[ultra thick,orange] (0,1)--(0,5);
	\draw[ultra thick,orange] (0,1)--(2,-1);
	\draw[ultra thick,orange] (0,5)--(2,-1);
	\pattern[pattern=north west lines, pattern color=orange] (2,3)--(3,0)--(2,1)--(2,3);
	\draw[ultra thick,orange] (2,3)--(3,0);
	\draw[dashed, ultra thick,orange] (2,3)--(2,1);
	\draw[dashed, ultra thick,orange] (2,1)--(3,0);
	\draw[ultra thick] (0,5)--(4,1);
	\draw[ultra thick] (4,1)--(2,-1);
	\draw[dashed, ultra thick] (0,1)--(4,1);
	\draw[fill,red] (0,1) circle [radius=0.1];
	\draw[fill,red] (4,1) circle [radius=0.1];
	\draw[fill,red] (2,-1) circle [radius=0.1];
	\draw[fill,red] (0,5) circle [radius=0.1];
	\draw[fill,red] (2,3) circle [radius=0.1];
	\draw [ultra thick, fill=white] (2,1) circle [radius=0.1];
	\draw [ultra thick, fill=white] (0,3) circle [radius=0.1];
	\draw [ultra thick, fill=white] (3,0) circle [radius=0.1];
	\draw [ultra thick, fill=white] (1,2) circle [radius=0.1];
	\draw [ultra thick, fill=white] (1,0) circle [radius=0.1];
	\node[below] at (2,-1.3) {{\bf Figure 6.}  Slicing the polytope $\Delta$.};
	\node[right] at (0,5) {(2,0,0)};
	\node[right] at (2,3) {(1,0,1)};
	\node[left] at (0,1) {(0,0,0)};
	\node[right] at (4,1) {(0,0,2)};
	\node[right] at (2,-1) {(0,2,0)};
	\end{tikzpicture}
\end{center}
A polynomial $f_j(x_1,y,t)$ supported at the set $A,$ can be written in the form $f_j(x_1,y,t)=f_j^{\tilde{\Gamma}}(x_1,y)+t\cdot\tilde{g}_{j,1}(x_1,y)+t^2\cdot\tilde{g}_{j,2}(x_1,y)=(a+bx_1^2+cy^2)+t\cdot \alpha x_1y+t^2\cdot \beta.$
\end{exa}
 
Let $p=(0,\ldots,0,u,0)$ be a multiple intersection point of the curve $\mathcal{C}$ with the $1$-dimensional orbit of the toric variety $X_{P}$ corresponding to the edge $\Gamma\subset P.$ Consider a pair of branches of the curve $\mathcal{C}$ passing through the point $p.$ Then the preimages of this two branches of the curve $\mathcal C$ intersect the orbit corresponding to the facet $\tilde{\Gamma}\subset\Delta$ at the points $p_1=(v_1,\ldots,v_n,u,0), p_2=(v'_1,\ldots,v'_n,u,0)$ respectively. Moreover, for every $1\leqslant i\leqslant n+1,$ we have $g_i(p_1)=g_i(p_2).$ Due to Assumption \ref{indall} the support set $A$ satisfies $\ind_v(A)=1$. Therefore there exists a number $K\in\N,$ such that for $1\leqslant i\leqslant n+1,$ we have $\tilde{g}_{i,K-1}(p_1)=\tilde{g}_{i,K-1}(p_2)$ and $\tilde{g}_{i,K}(p_1)\neq\tilde{g}_{i,K}(p_2).$ We denote this number by $K(p_1,p_2)$ to emphasize its dependence on the points $p_1,p_2.$ 

\begin{exa}\label{ord_cont}
Consider the same simplex $\check{\Delta}$ as in Example \ref{slice}. The projection $\tilde{C}$ of a complete intersection given by generic polynomials $f_1,f_2$ supported at $A,$ has two multiple points on the $1-$dimensional orbit of the toric variety $X_{P}$ corresponding to the edge $\Gamma=P^{(0,-1)}$. 
The polygon $P$ is a standard simplex of size $4,$ and since $\ind_v(A\cap\tilde{\Gamma})=2,$ each of the multiple points has $2$ preimages. Take any of them, and denote it by $p.$ Its preimages $p_1,p_2$ are not distinguished by $f_j^{\tilde{\Gamma}}$ while the polynomials $\tilde{g}_{j,1}$ do distinguish them. Therefore, we have $K(p_1,p_2)=1.$
\end{exa}

\begin{lemma}\label{ordcontact}
In the same notation as above, for a generic tuple of polynomials $f_1\ldots,f_{n+1}\in\C^A$ the order of contact between the projections of the branches of the curve $\tilde{\mathcal C}$ passing through the points $p_1,p_2,$ such that $\pi(p_1)=\pi(p_2)=p,$ equals to $K(p_1,p_2).$	
\end{lemma}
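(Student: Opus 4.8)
\textbf{Proof plan for Lemma \ref{ordcontact}.}

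The plan is to compute the order of contact of the two branches directly, by parametrizing each of them as a Puiseux (in fact power, by Lemma \ref{inf_br}) series in the local coordinate that cuts out the orbit $\mathcal O_{\Gamma}$, and then reading off the first index at which the two parametrizations differ. Since we have arranged (Assumption \ref{primitive}, Proposition \ref{vert_change}) that $\tilde\Gamma$ is ``vertical'' and $\Gamma$ lies on a coordinate axis, the relevant local chart of $X_P$ is $\C^2$ with coordinates $(t,y)$, $t=0$ being the orbit $\mathcal O_{\Gamma}$, and $\pi$ is genuinely the forgetful projection $(x_1,\dots,x_n,y,t)\mapsto(y,t)$ from the corresponding chart $\C^{n+2}$ of $X_\Delta$. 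By Lemma \ref{inf_br}, near the limiting point $q_j$ ($j=1,2$) above $p$ the curve $\tilde{\mathcal C}$ is smooth and meets $\{t=0\}$ transversally, so it admits a parametrization $t\mapsto\big(x_1(t),\dots,x_n(t),y(t),t\big)$ with convergent power series, $x_i(0)=v_i$ (resp.\ $v_i'$), $y(0)=u$; the projected branch is then $t\mapsto(y(t),t)$, whose nonvanishing $t$-coordinate means the order of contact between the two projected branches equals the smallest $m\geqslant 1$ with $y^{(1)}(t)$ and $y^{(2)}(t)$ having different coefficient of $t^m$ (here I am using that both branches pass through the same point $p=(u,0)$, so the $0$-th coefficients agree, and that the $t$-coordinates are identical).

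The next step is to extract these coefficients from the defining equations. Writing $f_i=g_i(x,y)+\sum_{m\geqslant 1}t^m\tilde g_{i,m}(x,y)$ as in the text and substituting the parametrization, the equations $f_i\equiv 0$ become, order by order in $t$, a triangular system determining the Taylor coefficients of $(x_1(t),\dots,x_n(t),y(t))$. At order $0$ we get $g_i(x(0),y(0))=0$, i.e.\ $p_j$ is a root of the truncated system $\{g_1=\dots=g_{n+1}=0\}$; the Jacobian of $(g_1,\dots,g_{n+1})$ in the $n+1$ variables $(x_1,\dots,x_n,y)$ at such a root is nondegenerate for generic coefficients (this is exactly the transversality/smoothness input of Lemma \ref{inf_br} together with Bernstein--Kouchnirenko nondegeneracy), so at each subsequent order $m$ the vector of $m$-th coefficients $\big(x_1^{(m)},\dots,x_n^{(m)},y^{(m)}\big)$ is the solution of a nonsingular linear system whose right-hand side depends only on $\tilde g_{i,1},\dots,\tilde g_{i,m}$ evaluated at the lower-order coefficients and on the already-computed lower-order data. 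Comparing the two branches: as long as the two solution points are not yet distinguished — i.e.\ $m<K(p_1,p_2)$ — the inputs $\tilde g_{i,r}(p_1)=\tilde g_{i,r}(p_2)$ for $r\leqslant m$ and, inductively, the lower-order coefficients agree, hence the $m$-th coefficients agree as well, in particular $y^{(m)}(t)$ has the same coefficient for the two branches. At $m=K(p_1,p_2)$ the inputs first disagree because $\tilde g_{i,K}(p_1)\neq\tilde g_{i,K}(p_2)$ for at least one $i$, and for generic coefficients this genuinely propagates to a difference in the $y$-coefficient rather than being accidentally absorbed into the $x_i$-coefficients alone; this is the one place where a genericity argument (a nonvanishing of a certain polynomial in the coefficients of the $f_i$, valid on a Zariski-dense open set, intersected with the dense open sets of Propositions \ref{diff_facets} and \ref{vert_change}) is needed.

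The main obstacle is precisely this last point: ruling out the ``conspiracy'' in which the branches become distinguished in the $x$-coordinates at level $K$ but only at some strictly later level in the $y$-coordinate, which would make the order of contact of the \emph{projected} branches strictly larger than $K(p_1,p_2)$. I would handle it as follows. Using the triangular structure, the discrepancy $\big(\Delta x_1,\dots,\Delta x_n,\Delta y\big)$ of the $K$-th coefficients satisfies the same nonsingular linear system with right-hand side a nonzero vector built from $\tilde g_{i,K}(p_1)-\tilde g_{i,K}(p_2)$; its $y$-component is a fixed rational function of the coefficients of the $f_i$ which is not identically zero — exhibited by a single explicit example (e.g.\ along the lines of Example \ref{ord_cont}, where one checks by hand that $K=1$ and the projected branches do separate at first order) — hence nonzero on a dense Zariski-open subset of $\C^A\times\dots\times\C^A$. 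Intersecting this open set with the ones already produced, we conclude that for generic $f_1,\dots,f_{n+1}$ the order of contact of the projected branches equals $K(p_1,p_2)$, as claimed. (A cleaner, coordinate-free alternative for the non-conspiracy step: observe that the map sending the coefficient data to the pair of projected branches is dominant onto the space of pairs of formal branches through $p$ with prescribed lower-order behaviour, so the ``expected'' value $K(p_1,p_2)$ of the order of contact is attained generically; I would fall back on the explicit-Jacobian computation above if a fully rigorous version of this dominance statement proves cumbersome to write out.)
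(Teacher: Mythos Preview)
Your overall strategy---parametrize each branch by $t$ and read off the first order at which the $y$-series differ---is sound, and the setup via Lemma \ref{inf_br} is exactly right. But the inductive step for the lower bound has a genuine gap. You write that for $m<K$ ``inductively, the lower-order coefficients agree, hence the $m$-th coefficients agree as well.'' This is false already at order zero: the $x$-coordinates of $p_1$ and $p_2$ differ ($v_j'=r_jv_j$ for roots of unity $r_j$), so the constant terms of the $x_j(t)$ for the two branches are unequal. Worse, the Jacobian of $(g_1,\dots,g_{n+1})$ at $p_1$ is \emph{not} the same matrix as at $p_2$---column $k$ gets scaled by $r_k^{-1}$---so the two branches solve genuinely different linear systems at each order, and you cannot simply subtract them. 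The same objection applies to your upper-bound sentence ``the discrepancy $(\Delta x_1,\dots,\Delta x_n,\Delta y)$ satisfies the same nonsingular linear system.''

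What rescues the lower bound is exactly the equivariance that the paper exploits in Proposition \ref{lowerbound}: the rescaling $\sigma\colon x_j\mapsto r_jx_j$ fixes the invariant part $F_i$ of each $f_i$, carries the branch of $X=\{F_1=\dots=F_{n+1}=0\}$ through $p_1$ to the one through $p_2$, and acts trivially on $(y,t)$. Hence the \emph{projections} of the two $X$-branches literally coincide; since the non-invariant parts $G_i$ start at $t$-order $K$, the full branches of $\tilde{\mathcal C}$ agree with the $X$-branches to order $K-1$, giving contact $\geqslant K$. If you want to salvage your induction, the correct hypothesis is not ``all coefficients agree'' but ``the $x$-coefficients are $\sigma$-related and the $y$-coefficients agree''; checking this via Cramer's rule is just the equivariance observation in coordinates.

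For the upper bound the paper also proceeds differently and more concretely: it first proves the case $K=1$ by an explicit $2\times 2$ determinant of minors of the Jacobian (Proposition \ref{transverse}), then for general $K$ specializes to tuples with $\tilde g_{i,m}=0$ for $0<m<K$, reduces to the $K=1$ case via $\check t=t^K$, and finally invokes upper semi-continuity of the intersection number (together with the already-proved lower bound) to pass to arbitrary coefficients. Your ``nonzero rational function plus one example'' plan could be made to work once the equivariance is handled, but as written it inherits the same gap.
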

\begin{proof}
Let $K=K(p_1,p_2),$ where $p_1,p_2$ are as described above. Let us first compute the lower bound for the sought order of contact. 
\begin{utver}\label{lowerbound}
The order of contact between the projections of the branches of $\tilde{\mathcal C}$ passing through the points $p_1,p_2$ is greater or equal to $K.$ 
\end{utver}
\begin{proof}
Note that the points $p_1=(v_1,\ldots,v_n,u,0), p_2=(v'_1,\ldots,v'_n,u,0)$ under consideration are related in the following way. For some integers $a_1,\ldots,a_n,$ we have $v'_j=r_jv_j,$ where $r_j$ is some $(a_j)$-th root of unity. Also, note that rescaling the coordinate system in the same manner, i.e. performing the change of variables $\check{x}_i=r_jx_j,$ does not affect the coefficients monomials of $f_1,\ldots,f_{n+1}$ that do not distinguish the points $p_1$ and $p_2.$ For $1\leqslant j\leqslant n+1,$ by $F_j$ denote the part of $f_j$ which is invariant under 
this rescaling, and by $G_j$ the one that is not. 

Now, let us compute the order of contact at the point $p_1$ between the complete intersection curves $\tilde{\mathcal C}=\{f_1=\ldots=f_{n+1}=0\}$ and $X=\{F_1=\ldots=F_{n+1}=0\}.$ Let the curve $X$ be locally parametrized, i.e., in a neighborhood of the point $p_1,$ the curve $X$ is the image of a parametrization map $s\mapsto\varphi(s)=(\varphi_1(s),\ldots,\varphi_n(s),\varphi_{n+1}(s), \varphi_{n+2}(s)).$ Note also that since the $n+2$-th coordinate of $p_1$ is $0,$ the Taylor series of $\varphi_{n+2}$ at $p_1$ has no constant term, therefore it starts with the term of degree at least $1$ in $s.$ 

Substituting $\varphi(s)$ into the system $\{f_1=\ldots f_{n+1}=0\},$ we obtain the following system: $$\{G_1(\varphi(s))=\ldots=G_{n+1}(\varphi(s))=0\}.$$ The way the number $K=K(p_1,p_2)$ was defined implies that the polynomials $G_j$ are of the form $G_j=\sum_{m=K}^{\infty} t^m\tilde{g}_{i,m}(x_1\ldots,x_n,y).$ Thus, for $1\leqslant j\leqslant n+1,$ the leading term in $G_j(\varphi(s))$ is of degree $\geqslant K$ in $s.$ So, the order of contact between $X$ and $\tilde{\mathcal C}$ at $p_1$ is at least $K.$ The same is obviously true for the order of contact between $X$ and $\tilde{\mathcal C}$ at $p_2.$ 

Now we note that the change of variables $\check{x}_i=r_jx_j$ maps the branch of the curve $X$ passing through $p_1$ to the one passing through $p_2$ and at the same time does not change the defining polynomials of $X.$ Moreover, since this change of variables does not do anything with the last two of the $(n+2)$ coordinates, the projections of the two above-mentioned branches of $X$ coincide in some neighborhood of $p=\pi(p_1)=\pi(p_2).$ Finally, the projection $\pi$ does not decrease the order of contact between curves, which yields the desired inequality. 
\end{proof}

\begin{utver}\label{transverse}
In the same notation as above, suppose that $K(p_1,p_2)=1.$ Then for generic $f_1,\ldots,f_{n+1}\in\C^A,$ the projections of the branches of $\tilde{\mathcal C}$ that pass through $p_1$ and $p_2$ intersect transversally. 
\end{utver}

\begin{proof}
The idea is to compare the projections of the tangent lines to the curve $\tilde{C}$ at the points $p_1$ and $p_2$ and make sure that for almost all $f_1,\ldots,f_{n+1}\in\C^A,$ they do not coincide. 

To find the tangent lines at $p_1$ and $p_2$ we need to compute the kernel of the $(n+1)$--differential form $\bigwedge_{i=1}^{n+1} df_i$ at those points. Moreover, since we need to compare the projections of the tangent lines, the only components of $\bigwedge_{i=1}^{n+1} df_i$ that we have to look at are $dx_1\wedge\ldots\wedge dx_n\wedge dt$ and $dx_1\wedge\ldots\wedge dx_n\wedge dy.$ In the setting that is being considered, the corresponding coefficients will be the last two minors $M_{n+1}$ and $M_{n+2}$ of the $(n+2)\times(n+1)$--matrix $\mathcal J$ evaluated at the points $p_1$ and $p_2,$ where 

$$\mathcal J=\begin{pmatrix} 
\frac{\partial g_1}{\partial x_1}&\dots&\frac{\partial g_1}{\partial x_n}&\frac{\partial g_1}{\partial y}&\tilde{g}_{1,1}\\
\frac{\partial g_2}{\partial x_1}&\dots&\frac{\partial g_2}{\partial x_n}&\frac{\partial g_2}{\partial y}&\tilde{g}_{2,1}\\
\vdots & \ddots & \vdots & \vdots & \vdots \\
\frac{\partial g_{n+1}}{\partial x_1}&\dots&\frac{\partial g_{n+1}}{\partial x_n}&\frac{\partial g_{n+1}}{\partial y}&\tilde{g}_{n+1,1}\\
\end{pmatrix}.$$

Therefore, we need to show that for generic $f_1,\ldots,f_{n+1},$ we have 
\begin{equation}\label{jac}
\begin{vmatrix}
M_{n+1}(p_1)&M_{n+2}(p_1)\\
M_{n+1}(p_2)&M_{n+2}(p_2)\\
\end{vmatrix}\neq 0.
\end{equation}

The condition (\ref{jac}) is clearly algebraic, so, the set of tuples $f_1,\ldots,f_{n+1}\in\C^A$ satisfying it is Zariski open. To make sure it is everywhere dense, we need to show that it is non-empty. Indeed, suppose that for some $f_1,\ldots,f_{n+1}\in\C^A,$ we have  

\begin{equation}\label{tangentcond}
\begin{vmatrix}
M_{n+1}(p_1)&M_{n+2}(p_1)\\
M_{n+1}(p_2)&M_{n+2}(p_2)\\
\end{vmatrix}= 0.
\end{equation}

This is equivalent to the following system of equations:
\begin{equation}\label{syst}
\begin{cases}
M_{n+1}(p_1)=\lambda M_{n+2}(p_1)\\
M_{n+1}(p_2)=\lambda M_{n+2}(p_2)
\end{cases}
\end{equation}
for some $\lambda\in\C.$

The minors $M_{n+1}$ and $M_{n+2}$ are given by the following formulas:
\begin{equation}\label{Minors}
M_{n+1}=\begin{vmatrix} 
\frac{\partial g_1}{\partial x_1}&\dots&\frac{\partial g_1}{\partial x_n}&\tilde{g}_{1,1}\\
\frac{\partial g_2}{\partial x_1}&\dots&\frac{\partial g_2}{\partial x_n}&\tilde{g}_{2,1}\\
\vdots & \ddots & \vdots & \vdots \\
\frac{\partial g_{n+1}}{\partial x_1}&\dots&\frac{\partial g_{n+1}}{\partial x_n}&\tilde{g}_{n+1,1}\\
\end{vmatrix},
~M_{n+2}=\begin{vmatrix} 
\frac{\partial g_1}{\partial x_1}&\dots&\frac{\partial g_1}{\partial x_n}&\frac{\partial g_1}{\partial y}\\
\frac{\partial g_2}{\partial x_1}&\dots&\frac{\partial g_2}{\partial x_n}&\frac{\partial g_2}{\partial y}\\
\vdots & \ddots & \vdots & \vdots \\
\frac{\partial g_{n+1}}{\partial x_1}&\dots&\frac{\partial g_{n+1}}{\partial x_n}&\frac{\partial g_{n+1}}{\partial y}\\
\end{vmatrix}.
\end{equation}
The matrices in (\ref{Minors}) are almost identical except for the last column, let us expand their determinants along this column. Then, we obtain:
\begin{align}\label{determexpand}
M_{n+1}=\sum_{j=1}^{n+1}(-1)^{n+1+j}\tilde{g}_{j,1}D_j=(-1)^{n+2}\tilde{g}_{1,1}D_1+\sum_{j=2}^{n+1}(-1)^{n+1+j}\tilde{g}_{j,1}D_j;\\
M_{n+2}=\sum_{j=1}^{n+1}(-1)^{n+1+j}\frac{\partial g_j}{\partial y}D_j=(-1)^{n+2}\frac{\partial g_1}{\partial y}D_1+\sum_{j=2}^{n+1}(-1)^{n+1+j}\frac{\partial g_j}{\partial y}D_j, 
\end{align}
where $D_j$ are the cofactors of the entries in the last columns. 
We have that for generic $f_1,\ldots,f_{n+1},$ the intersection points of the curve $\tilde{\mathcal{C}}$ with the orbit of $X_{\Delta}$ corresponding to the facet $\tilde{\Gamma}$ are non-singular, therefore the minor $M_{n+1}$ does not vanish at $p.$ Thus, at least one of the cofactors $D_j$ is not zero. Without loss of generality, let us assume that the cofactor $D_1$ is not zero. Substituting equations in (\ref{determexpand}) into the system (\ref{syst}), we obtain:
\begin{equation}\label{syst1}
\begin{cases}
(-1)^{n+2}(\tilde{g}_{1,1}-\lambda\frac{\partial g_1}{\partial y})D_1\mid_{p_1}=\sum_{j=2}^{n+1}(-1)^{n+j}(\tilde{g}_{j,1}-\lambda\frac{\partial g_j}{\partial y})D_j\mid_{p_1}\\
(-1)^{n+2}(\tilde{g}_{1,1}-\lambda\frac{\partial g_1}{\partial y})D_1\mid_{p_2}=\sum_{j=2}^{n+1}(-1)^{n+j}(\tilde{g}_{j,1}-\lambda\frac{\partial g_j}{\partial y})D_j\mid_{p_2}.
\end{cases}
\end{equation}
Now let us note, that if the polynomial $\tilde{g}_{1,1}$ distinguishes the points $p_1$ and $p_2,$ then so does at least one of its monomials. If we change the coefficient of this monomial in $\tilde{g}_{1,1},$ then the left-hand sides of each of the equalities in (\ref{syst1}) change independently, while the right-hand sides remain unchanged. Therefore the equations become no longer true, so the determinant in (\ref{tangentcond}) does not vanish anymore. 
\end{proof}

It remains to show that Proposition \ref{transverse} together with Proposition \ref{lowerbound} implies Lemma $\ref{ordcontact}$ for $K\neq 1.$ Let us note that if all the polynomials $\tilde{g}_{i,m}$ for all $1\leqslant i\leqslant n+1$ and all  $m$ except for $m=0$ and $m=K$ are zero, then the proof is the same as in case $K=1,$ up to a change of variable $\check{t}=t^{K}.$  Therefore, we proved that for almost all $f_1,\ldots, f_{n+1}$ contained in the plane in $\C^A$ given by equations $c_{\alpha}=0,$ where $\alpha\notin\supp(f_i^{\tilde{\Gamma}})\cup\supp(\tilde{g}_{i,K}),$ the order of contact of the projections of the branches passing through $p_1,p_2$ is equal to $K=K(p_1,p_2).$ 

Finally, the intersection index is upper semi-continuous. Therefore, perturbing the coefficients of monomials in $\alpha\in \supp(\tilde{g}_{i,m}),~1\leqslant m\leqslant K-1,$ does not increase the sought order of contact, therefore, it remains smaller or equal to $K.$ At the same time, Proposition \ref{lowerbound} gives the lower bound for the sought order of contact, which is also equal to $K.$ Thus Lemma \ref{ordcontact} is proved.
\end{proof}

\subsection{Singular Points of the Curve $\mathcal C$ at Infinity}\label{s2}
Let $\Gamma\subset\Delta$ be a facet contained in a hyperplane given by a linear equation of the form $\ell(e_1,\ldots,e_{n+2})=c.$

We now construct a sequence of integers $i^{\Gamma}=(i_1^{\Gamma},i_2^{\Gamma},\ldots)$ as follows. 

Set $B_1^{\Gamma}=A\cap\Gamma.$ For every $r>1,$ we define  $$B_r^{\Gamma}=B_{r-1}^{\Gamma}\cup(A\cap\{\ell(e_1,\ldots,e_{n+2})=c\pm (r-1)\}),$$ depending on the way $\Delta$ is positioned relative to the hyperplane containing $\Gamma.$ 
Finally, for every $r\geqslant 1,$ we set $$i_r^{\Gamma}=\ind_{v}(B_r^{\Gamma}).$$

\begin{rem}\label{stabind}
It is clear that for every $r,$ the element $i_r^{\Gamma}$ divides $i_{r-1}^{\Gamma}.$ Moreover, since for the set $A$ we have $\ind_v(A)=1,$ any such sequence stabilizes to $1$.
\end{rem}
In the same notation as above, we state the following result. 
\begin{lemma} \label{fpsinfinity}
For a generic tuple of polynomials $f_1,\ldots,f_{n+1}\in\C^A,$ all the singularities of the curve $\mathcal C$ at infinity are $i^{\Gamma}-$forking paths singularities for non-horizontal facets $\Gamma\subset\Delta.$ 
\end{lemma}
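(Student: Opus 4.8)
The plan is to combine the local analysis of Subsection \ref{s1} with the combinatorial data encoded in the sequences $i^\Gamma$ to identify the analytic type of each singular point of $\mathcal C$ lying on a one-dimensional orbit $\mathcal O_\Gamma \subset X_P$. By Lemma \ref{inf_br}, every limiting point $p$ of $\mathcal C$ in $X_P$ lies on some $\mathcal O_\Gamma$ for an edge $\Gamma \subset P$, and near $p$ the curve $\mathcal C$ is a union of finitely many smooth branches, each meeting $\mathcal O_\Gamma$ transversally; moreover by Lemma \ref{volumes} and Proposition \ref{diff_facets} we may assume there is a unique facet $\tilde\Gamma \subset \Delta$ projecting to $\Gamma$, that $\tilde\Gamma$ is ``vertical'' (by Proposition \ref{vert_change}, legitimate thanks to Assumption \ref{primitive}), and that the number of branches of $\mathcal C$ through $p$ equals the number of preimages of $p$ in $\mathcal O_{\tilde\Gamma}$, namely the multiplicity $\ind_v(A\cap\tilde\Gamma) = i_1^\Gamma$ of a root of the truncated system (Theorem 5.3 in \cite{M}). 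So the first step is to record that each singular point $p$ has exactly $i_1^\Gamma$ smooth branches meeting $\mathcal O_\Gamma$ transversally.

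The second, and central, step is to reconstruct the nested-boxes combinatorics of Definition \ref{def_fps} from the filtration $B_1^\Gamma \subset B_2^\Gamma \subset \cdots$ and to show that the pairwise orders of contact between branches match the depths $\kappa$ of that construction. Label the $i_1^\Gamma$ branches through $p$ by the $i_1^\Gamma$ preimages $p_1,\dots,p_{i_1^\Gamma}$ of $p$ in $\mathcal O_{\tilde\Gamma}$. For two such preimages $p_a, p_b$, Lemma \ref{ordcontact} says the order of contact between the projected branches equals $K(p_a,p_b)$, the smallest $r$ for which some $\tilde g_{i,r}$ (i.e. some monomial on the slice $\{\ell = c - r\}$) separates $p_a$ from $p_b$. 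On the other hand, $i_r^\Gamma = \ind_v(B_r^\Gamma)$ governs exactly how many distinct points of $\mathcal O_{\tilde\Gamma}$ remain unseparated after incorporating the first $r$ slices: the preimages of $p$ split into $i_1^\Gamma / i_r^\Gamma$ clusters of size $i_r^\Gamma$ according to whether they are distinguished by the monomials in $B_r^\Gamma$. Thus defining, on the set $R$ of preimages, the $r$-th level boxes to be these clusters realizes precisely the $i^\Gamma$-nested-boxes construction (with $q_r = i_r^\Gamma / i_{r+1}^\Gamma$), and by construction $\kappa(p_a,p_b)$ — the first level at which $p_a,p_b$ land in different boxes — equals $K(p_a,p_b)$, the order of contact. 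Here I would need to spell out carefully why ``separated by some monomial of $B_r^\Gamma$'' is an equivalence relation with exactly $i_1^\Gamma/i_r^\Gamma$ classes: this follows from the fact that the sublattice index $\ind_v(B_r^\Gamma)$ counts the order of the relevant finite subgroup of the torus acting on the fiber, and the roots of the truncated system form a coset of that subgroup — the same mechanism already used to prove the branch count $i_1^\Gamma$ and exploited in Example \ref{ord_cont}.

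Having matched both the number of branches and all pairwise contact orders, the third step invokes Definition \ref{def_fps}: a plane curve singularity with $i_1^\Gamma$ smooth branches whose pairwise intersection numbers are $\kappa(r_a,r_b)$ for the $i^\Gamma$-nested-boxes data is, by definition, an $i^\Gamma$-forking paths singularity. One should also check the genericity clause — that for generic $f_1,\dots,f_{n+1} \in \C^A$ there are no \emph{other} singular points of $\mathcal C$ at infinity: any point of $X_P$ outside the torus lies on some $\mathcal O_\Gamma$, and Lemma \ref{inf_br} already guarantees the branches there are smooth and transversal to $\mathcal O_\Gamma$, so the only singularities come from several preimages mapping to the same $p$, which is exactly the situation analyzed; points with a single preimage give a smooth point of $\mathcal C$. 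Finally one remarks that horizontal facets $\Gamma \in \mathcal H(\Delta)$ contribute edges of $P$ contained in $\partial Q$ that do not produce branches of $\mathcal C$ in the relevant orbits (their contribution is accounted for separately in the Euler-characteristic bookkeeping of Subsection \ref{main}), so only non-horizontal facets appear.

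The main obstacle I anticipate is the second step: rigorously identifying the equivalence relation ``unseparated by monomials of $B_r^\Gamma$'' with the nested-boxes partition and proving it has exactly $i_1^\Gamma/i_r^\Gamma$ classes. This requires a clean statement relating the index $\ind_v(B_r^\Gamma)$ to the structure of the fiber of the truncated resultantal set — essentially an application of Theorem \ref{num_fib} to the collection of truncations on the union of slices $B_r^\Gamma$, interpreting $d(\mathcal A)$ and the lattice index $|L'/L|$ as the count of clusters and cluster sizes respectively — together with the observation that passing from slice $r-1$ to slice $r$ refines each cluster uniformly by the factor $i_{r-1}^\Gamma/i_r^\Gamma$, which is what makes the partition genuinely \emph{nested} rather than merely a sequence of partitions. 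Everything downstream (matching $\kappa$ with $K$, invoking Definition \ref{def_fps}, excluding spurious singularities) is then comparatively routine given Lemmas \ref{inf_br} and \ref{ordcontact}.
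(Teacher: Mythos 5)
Your proposal is correct and follows the same route as the paper, whose entire proof is the one-line observation that the lemma is a straightforward corollary of Lemma \ref{inf_br}, Lemma \ref{ordcontact} and Definition \ref{def_fps}. The extra work you flag in your second step --- identifying the clusters of preimages left unseparated by the monomials of $B_r^{\Gamma}$ with the nested boxes via the indices $i_r^{\Gamma}=\ind_v(B_r^{\Gamma})$ --- is exactly the bookkeeping the paper leaves implicit in its construction of $K(p_1,p_2)$ and of the sequences $i^{\Gamma}$, so your elaboration fills in details rather than taking a different path.
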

\begin{proof}
This statement is a straightforward corollary of Lemma \ref{inf_br}, Lemma \ref{ordcontact} and Definition \ref{def_fps}. 
\end{proof}

\subsection{Comparing the Euler characteristics of the Complete Intersection and Its Projection}\label{main}

First, let us make the following important observation. 
\begin{utver}
If $A\subset\Z^{n+2}$ satisfies Assumption \ref{indall}, then for generic $f_1,\ldots,f_{n+1}\in\C^A,$ a generic point $p\in\mathcal{C}$ has exactly one preimage. Moreover, the set of points in $\mathcal{C}$ that have more than $1$ preimage is finite.
\end{utver}

\begin{proof}
The first of the statements follows from Assumption \ref{indall} together with Theorem \ref{num_fib} applied to the essential collection $\mathcal A=\Big(\underbrace{A,\ldots,A}_{n+1\text{~times}},\supp(y-y_0),\supp(t-t_0)\Big).$
Indeed, in the notation of Theorem \ref{num_fib}, we have $I=n+3,~L=L'=\Z^{n+2}$ and $k=0.$ So, $d(\mathcal A)=1,$ which proves the first part of the Proposition.
	
Moreover, note that since the projection $\pi\mid_{\tilde{\mathcal C}}$ is a regular morphism of varieties, the set of points having more than one preimage is a constructible set, which we denote by $\mathcal B.$ Let us show that it is finite.
	
Indeed, assume that $\mathcal B$ is infinite. Then it is of positive dimension, therefore, it contains an algebraic curve (possibly with finitely many punctured points). An algebraic curve in $\CC^2$ has at least one branch at infinity. Lemma \ref{fpsinfinity} implies that this branch is one of the branches of a forking paths singularity of the curve $\mathcal C$ at infinity. At the same time we note that the preimage of this branch contains a pair of branches of the curve $\tilde{\mathcal{C}}.$ However, Lemma \ref{ordcontact}, Lemma \ref{fpsinfinity} and Remark \ref{stabind} imply that the projections of any two branches of the curve $\tilde{\mathcal{C}}$ at infinity cannot coincide. 
\end{proof}

\begin{rem}\label{prim_rem}
Note that it suffices to prove Theorem \ref{lemmain} for a support set $A$ whose convex hull $\Delta$ satisfies Assumption \ref{primitive}. Indeed, suppose $\Delta$ does not satisfy this assumption. Then, using a monomial change of variables of the form $\check{x}_i=x_i,\check{y}^{N!}=y,\check{t}^{N!}$ for $N$ big enough, we obtain the polytope $\check{\Delta}$ that satisfies the desired assumption. Moreover, as was discussed in Remark \ref{primitive_exp}, under this change of variables, the left-hand side of (\ref{mainformula1}) is multiplied by $N!^2.$ Note that each of the summands in the right-hand side of (\ref{mainformula1}) is also multiplied by $N!^2.$ Therefore, the $N!^2$ factor can be cancelled out. So, Theorem \ref{lemmain} holds true independently of Assumption \ref{primitive}.
\end{rem}

\begin{proof}[Proof of Theorem \ref{lemmain}]
Suppose that the polytope $\Delta$ satisfies Assumption \ref{primitive}. 
The number $\mathcal{D}$ can be deduced from the following system of equations: 
	\begin{equation}\label{Dplus}
	\begin{cases}
	\chi(\tilde{\mathcal C})=-(n+1)\vol(\Delta)\\
	\chi(\tilde{\mathcal C})+\sum\limits_{\Gamma\in\mathcal H(\Delta)}\vol(\Gamma)=\chi(\mathcal C)+\sum\limits_{s\in\mathcal{S}}(b(s)-1)\\
	\chi(\mathcal C)=-\area(P)-\sum\limits_{s\in\mathcal{S}}(b(s)-1)+2\mathcal{D}+\sum_{\Gamma\in\mathcal{F}\Delta\setminus\mathcal H(\Delta)}\vol(\Gamma)\sum\limits_1^{\infty}(i_r^{\Gamma}-1),
	\end{cases}
	\end{equation}
	where $b(s)$ is the number of branches passing through the singular point $s\in\mathcal{S}.$
	
	The first equation is a straightforward corollary of Theorem \ref{chicomplint} (see Example \ref{chici} for details).
	
	The second equation follows from the additivity property of Euler characteristic combined with  Bernstein--Kouchnirenko theorem: to obtain the Euler characteristic of $\tilde{\mathcal{C}}$ from the one of $\mathcal{C}$ one just needs to count the multiple points as many times, as is the number of branches passing through them and subtract the number of points coming from vertical asymptotes of the curve $\tilde{\mathcal C}$ (and thus contained in the set $\mathcal C\setminus\pi(\tilde{\mathcal C})$).
	
	Finally, the third equation can be deduced as follows. Let $Y$ be a smooth curve having the same Newton polygon $P$ as the curve $\mathcal C.$ Theorem \ref{chicomplint} implies that its Euler characteristic is equal to $-\area(P)$ (see Example \ref{chiproj}).
	Now, we compare the Euler characteristic of $Y$ with the one of $\mathcal{C},$ and we have the following equality:
	\begin{equation}\label{euler_comp}
	\chi(Y)=\chi(\mathcal{C})-\sum\limits_{s\in\mathcal{S}}1+\sum\limits_{s\in\mathcal{S}}(b(s)-2\delta(s))\\+\sum\limits_{S\in \mathrm{FPS}}\chi([\mathrm{Milnor~fiber~of~}S]\cap\CC^2),
	\end{equation}
	where the third summation runs over all forking paths singularities of $\mathcal{C}$ at infinity.
	
Indeed, the right-hand side of (\ref{euler_comp}) can be interpreted as follows. We puncture small neighborhoods of singular points of the curve $\mathcal{C}.$ Then we replace those neighborhoods with their Milnor fibers. The latter are of Euler characteristic $b(s)-2\delta(s),$ by Milnor formula (see Theorem 10.5 in \cite{Milnor}). Finally, we add the Milnor fibers of the forking paths singularities of $\mathcal{C}.$ Then, the Euler characteristic of the curve that we obtain this way equals to the Euler characteristic of the smooth curve $Y.$ 

Remark \ref{prim_rem} implies that the desired statement holds true without Assumption \ref{primitive}, which concludes the proof of Theorem \ref{lemmain}.
\end{proof}

\section{Further Discussion}\label{fd}

\subsection{On the Singular Points of the curve $\mathcal C$} 
We begin this subsection by stating the following conjecture.

\begin{conj}\label{c1}
	Let $A\in\Z^{n+2}$ be a finite set of full dimension containing $0\in\Z^{n+2}$ and satisfying Assumption \ref{indall}. For generic polynomials  $f_1,\ldots,f_{n+1}\in\C^{A},$ the image of the complete intersection curve  ${\tilde{\mathcal{C}}=\{f_1=\ldots=f_{n+1}=0\}}\subset \CC^{n+2}$ under the projection $\pi\colon\CC^{n+2}\to\CC^2$ forgetting the first $n$ coordinates is an algebraic plane curve $\pi(\tilde{\mathcal C})$ with punctured points, and its singular locus consists only of nodes.
\end{conj}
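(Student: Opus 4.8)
For the curve $\mathcal{C}$ inside the torus $\CC^2$, the singular points are of exactly two types: images of ramification points of $\pi\mid_{\tilde{\mathcal C}}$ (points of $\tilde{\mathcal C}$ at which the tangent line degenerates into $\ker d\pi=\langle e_1,\ldots,e_n\rangle$), and multiple points, where two or more points of $\tilde{\mathcal C}$ are glued together by $\pi$. Since $\tilde{\mathcal C}$ is smooth and, by the Proposition at the beginning of Subsection \ref{main}, the set of multiple points is finite, the conjecture reduces to proving that for generic $f_1,\ldots,f_{n+1}\in\C^A$ one has: (i) no point of $\tilde{\mathcal C}$ inside $\CC^{n+2}$ is a ramification point of $\pi\mid_{\tilde{\mathcal C}}$ (so $\pi\mid_{\tilde{\mathcal C}}$ is an immersion on the torus and every branch through a multiple point is smooth); (ii) no point of $\CC^2$ has three or more preimages on $\tilde{\mathcal C}$; (iii) at every double point the two branches are transversal. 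Items (i)--(iii) together force the torus part of $\mathcal S$ to consist of nodes only, and, combined with Theorem \ref{lemmain}, they would upgrade the sum of $\delta$-invariants computed there to an honest node count (as in Example \ref{simplex}).

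The plan is to treat each of (i)--(iii) by a standard incidence-variety dimension count, in the spirit of the argument used in Proposition \ref{transverse}. For (i), consider $Z_1=\{(f,p): p\in\tilde{\mathcal C}_f,\ M_{n+1}(p)=M_{n+2}(p)=0\}$, where $M_{n+1},M_{n+2}$ are the last two maximal minors of the Jacobian matrix $\mathcal J$, cf. equation $(\ref{Minors})$; the vanishing of both minors is exactly the condition that the tangent line at $p$ lie in $\ker d\pi$. Fibering $Z_1$ over $p\in\CC^{n+2}$, the condition $p\in\tilde{\mathcal C}_f$ cuts codimension $n+1$ in $(\C^A)^{n+1}$, and one checks, using that $A$ is full-dimensional, that $M_{n+1}$ and $M_{n+2}$ then cut a further codimension $2$; hence $\dim Z_1=\dim(\C^A)^{n+1}-1$ and its image in $(\C^A)^{n+1}$ is a proper subvariety. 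For (iii), fiber over the $(2n+2)$-dimensional space of pairs $(p,q)$ with $p\neq q$ and $\pi(p)=\pi(q)$; here the conditions $f_j(p)=f_j(q)=0$ cut codimension $2(n+1)$ because the evaluation functionals $\mathrm{ev}_p,\mathrm{ev}_q$ on $\C^A$ are linearly independent — and this independence does follow from Assumption \ref{indall}, since $\ind_v(A)=1$ forces any two points with the same $\pi$-image to be separated by the monomials of $A$ — while the coincidence of the $d\pi$-images of the two tangent lines is one further condition, so $\dim Z_3=\dim(\C^A)^{n+1}-1$. For (ii), fiber over the $(3n+2)$-dimensional space of pairwise distinct triples $(p,q,r)$ with $\pi(p)=\pi(q)=\pi(r)$; if the three functionals $\mathrm{ev}_p,\mathrm{ev}_q,\mathrm{ev}_r$ are linearly independent on $\C^A$, then $f_j(p)=f_j(q)=f_j(r)=0$ cuts codimension $3(n+1)$ and $\dim Z_2=\dim(\C^A)^{n+1}-1$. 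In (i) and (iii) the remaining non-emptiness of the good locus is obtained exactly as in Proposition \ref{transverse}: one perturbs the coefficient of a single monomial of $A$ that distinguishes the relevant points, which alters one defining equation while fixing the others.

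The genuine obstacle is hidden in the proviso of step (ii): the functionals $\mathrm{ev}_p,\mathrm{ev}_q,\mathrm{ev}_r$, restricted to a fiber of $\pi$, depend only on the monomials indexed by $\rho(A)\subset\Z^n$, and they can be linearly dependent for \emph{every} triple in a fiber when $\rho(A)$ is too small. The extreme case is $|\rho(A)|=2$, i.e. $\rho(A)=\{0,e_1\}$ after the normalization of Assumption \ref{indall}, so that $f_1,\ldots,f_{n+1}$ are affine-linear in the $n$ forgotten variables and three preimages become possible; this is precisely the ``resultantal'' mechanism behind Example \ref{notnodal}. The plan would therefore be to classify exactly which combinatorial degeneracies of $A$ cause some $\mathrm{ev}$-configuration to drop rank — I expect these to be governed by the essential and weakly essential sublattice structure recalled in Section \ref{preliminaries} — and then either impose the hypothesis that $A$ avoids them (for instance a ``$3$-jet separation'' condition on $\rho(A)$ on the torus) or analyze the exceptional curves directly (in the $|\rho(A)|=2$ case $\mathcal C$ is cut out by a $2\times2$ determinant of Laurent polynomials, and one can ask separately whether such determinantal plane curves are generically nodal). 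Proving the conjecture as stated amounts to showing that Assumption \ref{indall} together with full-dimensionality of $A$ already suffices; I expect this combinatorial classification to be the hard part, whereas the immersion statement (i) should hold unconditionally under the standing assumptions.
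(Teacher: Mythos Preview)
The statement you are addressing is explicitly labeled \textbf{Conjecture} in the paper and is not proved there; the author only establishes the related Conjecture~\ref{c2} under the extra Assumptions~\ref{horiz_latt} and~\ref{horizontal} (Theorem~\ref{maintheor}), and never claims nodality of $\pi(\tilde{\mathcal C})$ in the generality of Conjecture~\ref{c1}. So there is no ``paper's own proof'' to compare against: you are sketching an approach to an open problem.

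Your overall strategy (incidence varieties and dimension counts for (i) immersivity, (ii) no triple fibres, (iii) transversality of double branches) is the natural one, and you correctly isolate step (ii) as the crux. Two specific comments on the analysis:

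\begin{itemize}
\item Your candidate obstruction $|\rho(A)|=2$, $\rho(A)=\{0,e_1\}$, does not behave as you claim. Full-dimensionality of $A$ in $\Z^{n+2}$ forces $\rho(A)$ to affinely span $\Z^n$, so this case only occurs for $n=1$; and then each $f_j$ is \emph{linear} in $x_1$ along every fibre of $\pi$, so the fibre system has at most one root and $\pi|_{\tilde{\mathcal C}}$ is injective. Far from allowing triple points, this case has no multiple points at all. So the ``extreme case'' you describe is not an obstruction, and the reference to Example~\ref{notnodal} is misplaced: that example is built on \emph{different} support sets $A_1\neq A_2$, precisely the situation excluded by the hypothesis $f_1,\dots,f_{n+1}\in\C^A$.
\item The genuine difficulty in (ii) is not the generic rank of $(\mathrm{ev}_p,\mathrm{ev}_q,\mathrm{ev}_r)$ but the stratification of the triple locus by rank: one must show that on every stratum where the rank drops, the residual dimension still falls below $\dim(\C^A)^{n+1}$. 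This requires controlling, for each $k$, the dimension of triples $(x_p,x_q,x_r)\in(\CC^n)^3$ for which the monomial vectors $(x^\beta)_{\beta\in\rho(A)}$ span only a $k$-plane, and matching it against the codimension lost. That combinatorial bookkeeping---essentially a statement about secant varieties of the toric variety of $\rho(A)$---is what is missing, and is presumably why the author left the statement as a conjecture.
\end{itemize}

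In short: your outline is sound and your instinct that (ii) is the hard step is right, but the concrete obstruction you propose is not one, and the actual combinatorial argument needed to close (ii) under only Assumption~\ref{indall} remains to be found.
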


\begin{rem}
	In other words, the statement of Conjecture \ref{s1} means that the singular points of the curve $\mathcal C$ which are not nodes are contained in the set $\mathcal C\setminus\pi(\tilde{\mathcal C}).$ 
\end{rem}

Conjecture \ref{c1} naturally leads to the following question. 

\begin{prb}\label{p2}
	In the same notation as above, compute the sum of the $\delta$-invariants of the singularities of the curve $\mathcal C$ at the punctured points of the curve $\pi(\tilde{\mathcal C}).$ 
\end{prb}

Given a solution to Problem \ref{p2}, one could then compute the number of nodes of the curve $\pi(\tilde{\mathcal C})$ itself. Under the following assumption, we state a conjecture for this number.

\begin{predpol}\label{horiz_latt}
For every horizontal facet $\Gamma\in\mathcal H(\Delta),$ the image of $\Gamma\cap A$ under the projection $\rho\colon\Z^{(n+2)}\twoheadrightarrow\bigslant{\Z^{(n+2)}}{\langle e_{n+1},e_{n+2}\rangle}$ generates a saturated sublattice. 
\end{predpol}

\begin{rem}
Assumption \ref{horiz_latt} guarantees that for generic $f_1,\ldots,f_{n+1}\in\C^{A},$ each of the singularities at the points in $\mathcal C\setminus\pi(\tilde{\mathcal C})$ has only one irreducible component. For $n=1,$ this assumption is satisfied for any support set $A,$ while for $n>1$ it is non-trivial.
\end{rem}

To state the conjecture below, we need to extend the construction of the sequences $i_r^{\Gamma}$ (see Subsection \ref{s2}) to horizontal facets of the polytope $\Delta.$ 

If we follow the same procedure as in Subsection \ref{s2}, we will get a sequence $j_k^{\Gamma}$ of indices of the form $(\underbrace{\infty,\infty,\ldots,\infty}_{ R \text{ times}},i^{\Gamma}_{R+1},i^{\Gamma}_{R+2},\ldots),$ where $R$ is the smallest strictly positive number such that  $B_{R+1}^{\Gamma}\neq A\cap\Gamma.$ 
Finally, we set $$i_r^{\Gamma}=j_{R+r}^{\Gamma}.$$

\begin{conj}\label{c2}
	Let $A\subset\Z^{n+2},~n\geqslant 1,$ be a finite set of full dimension containing $0$ and satisfying Assumptions \ref{indall} and \ref{horiz_latt}. Let $\Delta\subset\R^{n+2}$ be its convex hull. For generic $f_1,\ldots,f_{n+1}\in\C^A,$ the image of the $1-$dimensional complete intersection $\tilde{\mathcal{C}}=\{f_1=\ldots=f_{n+1}=0\}$ under the projection $\pi\colon\CC^{n+2}\to\CC^2$ forgetting the first $n$ coordinates, is an algebraic plane curve $\pi(\tilde{\mathcal C})$ with punctured points.  Its singular locus $\mathcal S$ (not including the punctured points) consists of isolated singular points, and the number $\mathcal D=\sum\limits_{s\in\mathcal S}\delta(s)$ can be computed via the following formula:  
	\begin{equation}\label{conjform}
	\mathcal D=\dfrac{1}{2}\Bigg(\area(P)-(n+1)\vol(\Delta)+\sum\limits_{\Gamma\in\mathcal{H}(\Delta)}\vol(\Gamma)\big(2i_1^{\Gamma}-(i_1^{\Gamma})^2\big)-\sum\limits_{\Gamma\in\mathcal{F}(\Delta)}\vol(\Gamma)\sum\limits_1^{\infty}(i_r^{\Gamma}-1)\Bigg),
	\end{equation}
	where $\delta(s)$ is the $\delta$-invariant of the singular point $s$, the polygon $P=\int_{\pi}(\Delta)$ is the Minkowski integral of $\Delta$ with respect to the projection $\pi$, the set $\mathcal{F}(\Delta)$ is the set of all the facets of the polytope $\Delta,$ and  $\mathcal{H}(\Delta)\subset\mathcal{F}(\Delta)$ is the set of all the horizontal facets of the polytope $\Delta.$ 
\end{conj}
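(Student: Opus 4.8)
The plan is to reduce Conjecture \ref{c2} to Theorem \ref{lemmain} together with a local computation at the punctured points, that is, to a solution of Problem \ref{p2}. Write $\tilde{\mathcal S}$ for the full singular locus of $\mathcal C$, so that the set $\mathcal S$ occurring in Conjecture \ref{c2} is $\tilde{\mathcal S}\setminus\big(\mathcal C\setminus\pi(\tilde{\mathcal C})\big)$. Since a punctured point that happens to be a smooth point of $\mathcal C$ contributes $0$ to the $\delta$-sum,
$$
\sum_{s\in\mathcal S}\delta(s)=\sum_{s\in\tilde{\mathcal S}}\delta(s)-\sum_{p\in\mathcal C\setminus\pi(\tilde{\mathcal C})}\delta(p),
$$
and Theorem \ref{lemmain} evaluates the first sum on the right by (\ref{mainformula1}). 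So it remains to prove
$$
\sum_{p\in\mathcal C\setminus\pi(\tilde{\mathcal C})}\delta(p)=\frac12\sum_{\Gamma\in\mathcal H(\Delta)}\vol(\Gamma)\Big((i_1^\Gamma-1)^2+\sum_{r\ge1}(i_r^\Gamma-1)\Big).
$$
Subtracting this from (\ref{mainformula1}) and using the identity $\big(2i_1^\Gamma-(i_1^\Gamma)^2\big)+(i_1^\Gamma-1)^2=1$ rewrites the horizontal-facet term of (\ref{mainformula1}) as the horizontal part of (\ref{conjform}), while the non-horizontal terms match verbatim; this yields (\ref{conjform}). Equivalently, one may rerun the three-equation system (\ref{Dplus}) with its second and third lines amended by the contribution of the now-singular punctured points; the two routes coincide.

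As in Remark \ref{prim_rem} I would first reduce to the case where $\Delta$ satisfies Assumption \ref{primitive}, via $\check y^{N!}=y,\ \check t^{N!}=t$: this multiplies $\vol(\Gamma)$, $\vol(\Delta)$ and $\area(P)$ by $N!^2$ and leaves the re-indexed sequences $i^\Gamma$ (hence $i_1^\Gamma$ and the shift $R$) unchanged, so the displayed identity for $\sum_p\delta(p)$ scales by $N!^2$ on both sides and the factor cancels.

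The geometric core is a local description of $\mathcal C$ at a punctured point, parallel to Lemmas \ref{inf_br}, \ref{ordcontact} and \ref{fpsinfinity}. Fix a horizontal facet $\tilde\Gamma=\Delta^{\gamma}$. Using Theorem 4.10 of \cite{EK} one identifies the points of $\mathcal C\setminus\pi(\tilde{\mathcal C})$ coming from $\tilde\Gamma$ with limits of vertical asymptotes of $\tilde{\mathcal C}$, controlled by the truncated system $\{f_1^{\tilde\Gamma}=\dots=f_{n+1}^{\tilde\Gamma}=0\}$; a root count of the type of Theorem 5.3 in \cite{M} gives $\vol(\tilde\Gamma)/i_1^\Gamma$ of them, each the image under $\pi$ of a single germ of the smooth curve $\tilde{\mathcal C}$ that meets the orbit $\mathcal O_{\tilde\Gamma}\subset X_\Delta$ with intersection multiplicity $i_1^\Gamma$. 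One then runs the order-of-contact analysis of Subsection \ref{s1} layer by layer through the sets $B_1^\Gamma\subset B_2^\Gamma\subset\cdots$: the first $R$ layers (the ones of infinite index, re-indexed away) produce only new asymptotic directions, and the subsequent layers produce the contact orders $i_r^\Gamma$. The statement to be established is that the resulting germ of $\mathcal C$ carries the Puiseux data of the $i^\Gamma$-forking-paths configuration with its $i_1^\Gamma$ branches glued along their common stem; Assumption \ref{horiz_latt} is exactly the hypothesis that keeps this germ irreducible, so its $\delta$-invariant is computed from its semigroup and equals $\frac12\, i_1^\Gamma\big((i_1^\Gamma-1)^2+\sum_{r\ge1}(i_r^\Gamma-1)\big)$. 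Summing over the $\vol(\tilde\Gamma)/i_1^\Gamma$ points and over $\mathcal H(\Delta)$ gives the displayed formula for $\sum_p\delta(p)$.

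I expect this local description to be the main obstacle. In Section \ref{mainproof} the forking-paths phenomenon was analysed on the toric boundary, where every branch meets its orbit transversally and projects to a smooth germ, so no Puiseux expansion entered; here the branches instead collapse onto a single absent point, and one must follow that collapse through the finitely many ``infinite-index'' layers $B_1^\Gamma,\dots,B_R^\Gamma$ before the genuine forking starts, and then check that Assumption \ref{horiz_latt} forces irreducibility rather than a splitting into several cuspidal branches (for $n=1$ this irreducibility is automatic, which is why nothing new is visible there). The remaining ingredients are routine adaptations of Section \ref{mainproof}: finiteness of $\mathcal S$ follows from the argument of Subsection \ref{main} (a positive-dimensional component of the set of multiple points would carry a branch at infinity, contradicting Lemmas \ref{ordcontact} and \ref{fpsinfinity} together with Remark \ref{stabind}), and the count of punctured points, the independence of the final answer of $N$, and the bookkeeping assembling (\ref{conjform}) are carried out as there.
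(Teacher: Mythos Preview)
The statement you are addressing is labeled a \emph{Conjecture} in the paper and is not proved there in full generality. What the paper actually establishes is the special case covered by the additional Assumption \ref{horizontal} (this is Theorem \ref{maintheor}): under that assumption every horizontal facet has $i^\Gamma=(1,1,\ldots)$, Lemma \ref{punct} shows that each punctured point of $\pi(\tilde{\mathcal C})$ is a smooth point of $\mathcal C$, and the conjectured formula (\ref{conjform}) collapses to (\ref{mainformula1}). There is no proof of (\ref{conjform}) in the paper beyond that case; Problem \ref{p2} is posed precisely because the $\delta$-invariants at the punctured points are not computed in general.

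Your reduction is sound: the identity $1-(i_1^\Gamma-1)^2=2i_1^\Gamma-(i_1^\Gamma)^2$ does convert (\ref{mainformula1}) into (\ref{conjform}) once one grants the displayed formula
\[
\sum_{p\in\mathcal C\setminus\pi(\tilde{\mathcal C})}\delta(p)=\tfrac12\sum_{\Gamma\in\mathcal H(\Delta)}\vol(\Gamma)\Big((i_1^\Gamma-1)^2+\sum_{r\ge1}(i_r^\Gamma-1)\Big),
\]
and the $N!$-rescaling argument carries over as you say. But this displayed formula is exactly the content of Problem \ref{p2}, which the paper leaves open. The local picture you sketch---an irreducible germ obtained by gluing the $i_1^\Gamma$ branches of an $i^\Gamma$-forking-paths configuration along a common stem, with the ``infinite-index'' layers $B_1^\Gamma,\dots,B_R^\Gamma$ absorbed into the stem and Assumption \ref{horiz_latt} forcing irreducibility---is a plausible heuristic, and the per-point $\delta$-value you write down is consistent with the global count, but none of it is established: the analogues of Lemmas \ref{inf_br} and \ref{ordcontact} for horizontal facets (where the branch of $\tilde{\mathcal C}$ meets $\mathcal O_{\tilde\Gamma}$ with multiplicity $>1$ and the projection is no longer transverse) are not available, and the Puiseux/semigroup computation you allude to is not carried out. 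You have correctly located the gap and correctly named it as ``the main obstacle''; what remains is a genuine open problem rather than a routine adaptation of Section \ref{mainproof}.
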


We will now prove Conjecture \ref{c2} under an additional assumption on the support set $A$. We begin with a result which is a straightforward corollary of Theorem \ref{lemmain}.

\begin{theor}\label{developed}
	Let $A\subset\Z^{n+2}$ be a finite set of full dimension, satisfying Assumption \ref{indall}, and let $\Delta\subset\R^{n+2}$ be its convex hull. Let $\pi\colon\CC^{n+2}\to\CC^2$ be the projection forgetting the first $n$ coordinates. 
	In the same notation as above, suppose that the set $\mathcal H(\Delta)$ is empty. 
	Then for generic $f_1,\ldots,f_{n+1}\in\C^A,$ the image of the curve $\tilde{\mathcal{C}}=\{f_1=\ldots=f_{n+1}=0\}$ under the projection $\pi$ is an algebraic plane curve $\mathcal{C}$ an  algebraic  plane  curve $\mathcal C$ (with no points punctured), whose singular locus $\mathcal S$ consists of isolated singular points. The number $\mathcal D=\sum\limits_{s\in\mathcal S}\delta(s)$ can be computed via the following formula:  
	\begin{equation}
	\mathcal D=\dfrac{1}{2}\Bigg(\area(P)-(n+1)\vol(\Delta)-\sum\limits_{\Gamma\in\mathcal{F}(\Delta)}\vol(\Gamma)\sum\limits_1^{\infty}(i_r^{\Gamma}-1)\Bigg),
	\end{equation}
	where $\delta(s)$ is the $\delta$-invariant of the singular point $s$, $P=\int_{\pi}(\Delta),$ the set $\mathcal{F}(\Delta)$ is the set of all facets of the polytope $\Delta.$
\end{theor}

Theorem \ref{developed} applies to the case when the polytope $\Delta$ is developed with respect to the plane of projection $\pi.$ We can, in fact, prove a stronger statement, namely, Theorem \ref{maintheor}, which works for a slightly more general case when the support set $A$ satisfies an additional property (see Assumption \ref{horizontal} below). 

\begin{predpol}\label{horizontal}
	For every horizontal face $\tilde{\Gamma}\subset \Delta,$ one of the following two possibilities is realized:
	
	\begin{itemize}
		\item the preimage $\tilde{\Gamma}\subset\Delta$ is not a facet;
		\item $\tilde{\Gamma}\subset\Delta$ is a facet, and the set $A\setminus\tilde{\Gamma}$ is at lattice distance $1$ from $\tilde{\Gamma}$.
	\end{itemize}
\end{predpol}

\begin{rem}
	One can rephrase the second condition in Assumption \ref{horizontal} as follows: if $\tilde{\Gamma}\subset\Delta$ 
	is a facet contained in a hyperplane given by the equation $\ell(e_1,\ldots,e_{n+2})=c,$ then the set $A\cap\{\ell(e_1,\ldots,e_{n+2})=c\pm 1\}\subset\Z^{n+2}$ is non-empty. 
\end{rem}

The set $\mathcal{C}\setminus\pi(\tilde{\mathcal{C}})$ consists of the projections of the intersection points of the curve $\tilde{\mathcal C}$ with the orbits of the toric variety $X_{\Delta}$ corresponding to the horizontal facets of the polytope $\Delta.$ As we will see further, under Assumptions \ref{horiz_latt} and \ref{horizontal}, for generic $f_1,\ldots,f_{n+1}\in\C^A,$ the points in $\mathcal{C}\setminus\pi(\tilde{\mathcal{C}})$ are smooth, therefore each of those points contributes exaclty $1$ to the Euler characteristic of the curve $\mathcal C.$

\begin{lemma}\label{punct}
Let $A\subset\Z^{n+2}$ be a finite set of full dimension, satisfying Assumptions \ref{indall},\ref{horiz_latt} and \ref{horizontal}. Then for generic $f_1,\ldots,f_{n+1}\in\C^A,$ the following statements are true: 
	\begin{enumerate}
		\item let $\Gamma\subset\Delta$ be a horizontal facet and $p$ be an intersection point of the closure of the curve $\tilde{\mathcal C}$ with the corresponding orbit $\mathcal O_{\Gamma}\subset X_{\Delta}.$ The point $\pi(p)$ has exactly one preimage;
		\item Let $p$ be as described in part 1. Then the projection of the tangent line at $p$ is $1$-dimensional.
	\end{enumerate}
\end{lemma}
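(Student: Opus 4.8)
The plan is to reduce both statements to a single local computation near the orbit $\mathcal O_{\tilde\Gamma}\subset X_\Delta$ associated to a horizontal facet $\tilde\Gamma$, treating $\tilde\Gamma$ as ``vertical'' after a suitable monomial change of variables. First I would invoke Proposition \ref{vert_change} (valid under Assumption \ref{primitive}, which by Remark \ref{prim_rem} we may assume) to fix a basis $(h_1,\dots,h_{n+2})$ in which $\tilde\Gamma$ lies in the coordinate hyperplane $\{h_{n+2}=0\}$ and horizontality of facets is preserved; since $\tilde\Gamma$ is horizontal, its projection under $\pi$ is a point, so in the new coordinates $\tilde\Gamma$ is contained in a coordinate hyperplane that maps to a \emph{point} under $\pi$ (not to an axis as in Lemma \ref{inf_br}). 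Concretely, after the change the projection $\pi$ becomes $(x_1,\dots,x_n,y,t)\mapsto(y,t)$ where the facet sits at $\{t=0\}$, say, and Assumption \ref{horizontal} guarantees that $A\cap\{\ell=c\pm1\}\neq\varnothing$, i.e.\ the polynomials $f_j$ genuinely contain positive powers of $t$. I would then write $f_j=g_j(x_1,\dots,x_n,y)+t\,\tilde g_{j,1}(x_1,\dots,x_n,y)+O(t^2)$ exactly as in Subsection \ref{s1}, where $g_j=f_j^{\tilde\Gamma}$.

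For part (1): the intersection points $p$ of $\overline{\tilde{\mathcal C}}$ with $\mathcal O_{\tilde\Gamma}$ are, for generic $f_j$, the smooth transverse intersection points of the curve with the hyperplane $\{t=0\}$, and their number with multiplicity is governed by $\vol(\tilde\Gamma)$ and $\ind_v(A\cap\tilde\Gamma)$. Two such points $p_1,p_2$ project to the same point of $\mathcal C\setminus\pi(\tilde{\mathcal C})$ only if $g_j(p_1)=g_j(p_2)$ for all $j$ while their $y$-coordinates agree, and the argument is the same as in Subsection \ref{s2}: the would-be coincidence forces the support set $A\cap\tilde\Gamma$ to project (under $\rho$) to a non-saturated sublattice, which is precisely forbidden by Assumption \ref{horiz_latt}. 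So I would spell out that $\ind_v(A\cap\tilde\Gamma)=1$ under Assumption \ref{horiz_latt}, deduce that the composite-system collection $\bigl(\underbrace{A\cap\tilde\Gamma,\dots}_{n+1},\supp(y-y_0)\bigr)$ projected to the quotient lattice is essential of multiplicity $1$, and conclude via Theorem \ref{num_fib} (as in the proof right after Remark \ref{prim_rem}) that a generic point of the image of $\{g_1=\dots=g_{n+1}=0\}$ under $(x,y)\mapsto y$ has exactly one preimage. Hence $\pi(p)$ has a single preimage for generic $f_j$, since the finitely many exceptional points can be avoided by genericity exactly as in Proposition \ref{diff_facets}.

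For part (2): I need the image of the tangent line to $\tilde{\mathcal C}$ at $p$ under $\pi$ to be $1$-dimensional, equivalently that $T_p\tilde{\mathcal C}$ is not contained in the fibre $\{y=y_0,\ t=t_0\}$ of $\pi$. Since $p$ lies on $\{t=0\}$ and the curve meets $\{t=0\}$ transversally, the tangent direction is spanned by the kernel of $\bigwedge_{i=1}^{n+1}df_i|_p$, and transversality to $\{t=0\}$ means the $dt$-component of this tangent vector is nonzero. But $dt$ is exactly one of the two coordinates spanning the image of $\pi$, so the projected tangent vector already has nonzero $t$-component and is therefore nonzero. This is essentially a one-line consequence of the transversality half of Lemma \ref{inf_br} (whose proof carries over verbatim to horizontal facets once they are made ``vertical'' by Proposition \ref{vert_change}), together with the observation that Assumption \ref{horizontal} is what makes $t$ appear in the $f_j$ at all — without the second bullet of Assumption \ref{horizontal} the curve could be asymptotic to $\{t=0\}$ and the argument would fail.

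The main obstacle I anticipate is part (1): one must genuinely use \emph{saturatedness} (Assumption \ref{horiz_latt}), not just $\ind_v(A)=1$, to rule out a pair of preimages collapsing under the horizontal projection, and I would need to phrase the quotient-collection/multiplicity bookkeeping carefully — this is the only place where $n>1$ behaves differently from $n=1$, precisely because for $n=1$ every relevant sublattice of $\Z^1$ is automatically saturated. The transversality statement in part (2) and the reduction to the ``vertical'' model are, by contrast, routine adaptations of Lemma \ref{inf_br} and Proposition \ref{transverse}.
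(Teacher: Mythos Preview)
There is a genuine geometric misunderstanding at the outset that breaks both parts of your argument. A horizontal facet $\tilde\Gamma$ is, by definition, one whose image under the projection $\rho$ forgetting $(y,t)$ lies in $\partial Q$; equivalently, the primitive covector $\gamma$ supported at $\tilde\Gamma$ has \emph{zero} components in the $e_{n+1},e_{n+2}$ directions. Thus the hyperplane containing $\tilde\Gamma$ is of the form $\{\gamma_1 x_1+\cdots+\gamma_n x_n=c\}$ and contains the full $(y,t)$-plane; its image under $\pi$ is all of $\C^2$, not a point. After a monomial change in the $x_i$-variables (not via Proposition \ref{vert_change}, which is stated only for \emph{non}-horizontal facets), the paper places $\tilde\Gamma$ in $\{x_1=0\}$ and writes $f_j=g_j(x_2,\dots,x_n,y,t)+\sum_{m\geqslant1}x_1^m\tilde g_{j,m}$. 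Your setup with $\tilde\Gamma\subset\{t=0\}$ describes the opposite situation and is not achievable by a horizontality-preserving change of variables.

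This matters decisively for Part (2). In the correct model the curve $\overline{\tilde{\mathcal C}}$ meets $\mathcal O_{\tilde\Gamma}$ transversally to $\{x_1=0\}$, so the tangent direction has nonzero $dx_1$-component --- but $x_1$ lies in $\ker\pi$, so transversality tells you nothing about the projected tangent. One genuinely has to compute the minors $M_{n+1},M_{n+2}$ of the Jacobian (with first column $\tilde g_{j,1}$, coming from Assumption \ref{horizontal}) and run a perturbation argument to show at least one is nonzero; your ``one-line consequence of transversality'' does not survive the correct coordinate picture. For Part (1) your outline also omits a case the paper treats explicitly: a second preimage $p'$ of $\pi(p)$ need not lie on $\mathcal O_{\tilde\Gamma}$ at all --- it may lie in the open torus $\CC^{n+2}$. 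Ruling out $p'\in\mathcal O_{\tilde\Gamma}$ is exactly the Theorem \ref{num_fib} step with the essential collection $(A\cap\tilde\Gamma,\dots,A\cap\tilde\Gamma,\supp(y-y_0),\supp(t-t_0))$ and saturatedness (Assumption \ref{horiz_latt}), but ruling out $p'\in\CC^{n+2}$ requires a separate perturbation using a monomial of positive $x_1$-degree (supplied by Assumption \ref{horizontal}) that distinguishes $p$ from any interior point.
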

\begin{proof}
	
	{\bf Part 1.} Let $\Gamma\subset\Delta$ be a horizontal facet. Up to a monomial change of variables one can assume the truncations $f_1^{\Gamma},\ldots,f_{n+1}^{\Gamma}$ to be polynomials in $(x_2,\ldots,x_n,y,t).$ Thus the polynomials $f_1,\ldots,f_{n+1}$ can be written in the following form: $f_i=g_i(x_2,\ldots,x_n,y,t)+\sum_{m=1}^{\infty}x_1^m\tilde{g}_{i,m}(x_2,\ldots,x_n,y,t),$ where $g_i=f_i^{\Gamma}.$ Assumption \ref{horizontal} implies that the polynomials $g_{i,1}$ are nonzero for all $1\leqslant i\leqslant n+1.$ Let $q=(u_2,\ldots,u_n,y_0,t_0)$ be a root for the system $f_1^{\Gamma}=\ldots=f_{n+1}^{\Gamma}=0$ and $p=(0,u_2,\ldots,u_n,y_0,t_0)\in\mathcal{O}_{\Gamma}$ be the corresponding limiting point of the curve $\tilde{\mathcal{C}}.$ 
	
	We need to show that there is no other point $p'$ in the closure of the curve $\tilde{ \mathcal {C}}\subset X_{\Delta}$ such that $\pi(p')=\pi(p)=(y_0,t_0).$ Suppose that such a point $p'$ exists. Then Proposition \ref{diff_facets} implies that the point $p'$ either belongs to the same orbit $\mathcal{O}_{\Gamma}$ as the point $p,$ or is in the torus $\CC^{n+2}.$ 
	
	As we will see further, due to Assumption \ref{horiz_latt}, the first of the two options is not the case. In other words, we will show that for any two distinct roots $q$ and $q'$ for the truncated system $f_1^{\Gamma}=\ldots=f_{n+1}^{\Gamma}=0,$ at least one of the characters $y^1$ or $t^1$ attains distinct values on $q$ and $q'.$  
	
    The latter is equivalent to the following statement: if the overdetermined system $$\{f_1^{\Gamma}=\ldots=f_{n+1}^{\Gamma}=y-y_0=t-t_0=0\}$$ is consistent, then it has a unique root.   
	
	Let $\Lambda_{\Gamma}$ be the lattice in $\Lambda=\bigslant{Z^{n+2}}{\langle e_{n+1},e_{n+2}\rangle}\simeq \Z^n$ generated by the image of $\Gamma\cap A$ under the projection $\rho\colon\Z^{(n+2)}\twoheadrightarrow\bigslant{\Z^{(n+2)}}{\langle e_{n+1},e_{n+2}\rangle}.$ Note that its dimension is equal to $n-1.$ Assumption \ref{horiz_latt} implies that the lattice $\Lambda_{\Gamma}\subset\Lambda$ is saturated, therefore the sets $(A\cap\Gamma),\supp(y-y_0),\supp(t-t_0)\subset\Z^{n+2}.$ generate a saturated dimension $n+1$ sublattice $L\simeq\Z^{n+1}$ in $\Z^{n+2}.$ Moreover, these sets cannot be shifted to the same proper sublattice in $L\simeq\Z^{n+1}.$ 
	
	The statement that we are proving is then a special case of Theorem \ref{num_fib}: we take the essential collection $\mathcal A$ consisting of $n+1$ copies of the set $A\cap\Gamma\subset\Z^{n+2}$ and two more sets $\supp(y-y_0),\supp(t-t_0)\subset\Z^{n+2}.$
	In the notation of this theorem, the formula for the number $d(\mathcal A)$ in our special case gives the answer $1,$ which means that for generic $f_1,\ldots,f_{n+1}\in\C^A,$ the system $\{f_1^{\Gamma}=\ldots=f_{n+1}^{\Gamma}=y-y_0=t-t_0=0\}$ has exactly one root $q.$ Indeed, in our case $L'=L,$ because $L$ is saturated, and $k=0,$ because the collection is essential.

	Therefore, any other point $p'\in\tilde{C}$ such that $\pi(p')=\pi(p)$ should belong to $\CC^{n+2},$ and thus any monomial of degree at least $1$ in $x_1$ distinguishes the points $p$ and $p'$. Assumption \ref{horizontal} implies the existence of such a monomial. Thus we can construct a polynomial $\tilde{f}$ such that $\tilde{f}(p)=0,$ and $\tilde{f}(p')\neq 0.$ Adding $\varepsilon\cdot\tilde{f}$ to one of the $f_i$'s does not affect the point $p$ in any way, while $p'$ does not belong to the corresponding complete intersection anymore. At the same time, the polynomial $(f_i+\varepsilon\cdot\tilde{f})\mid_p:=f_i(x_1,u_2,\ldots,u_n,y_0,t_0)+\varepsilon\cdot\tilde{f}(x_1,u_2,\ldots,u_n,y_0,t_0)$ is a polynomial in one variable $x_1,$ which has only finitely many roots $r_1,\ldots,r_m.$ Therefore, for generic $f_1,\ldots,f_{n+1},$ the restrictions $f_j\mid_p,~j\neq i$ do not all vanish at $r_1,\ldots,r_m.$ This concludes the proof of Part 1. 
	
	{\bf Part 2.} In the same notation as above, to describe the tangent line to the point $p,$ one needs first to compute the kernel of the differential form $\bigwedge_{i=1}^{n+1}df_i$ at the point $p.$ Since we are interested in the projection of the tangent line, the only components that we need to compute are $dx_1\wedge\ldots\wedge dx_{n}\wedge dy$ and $dx_1\wedge\ldots\wedge dx_{n}\wedge dt.$
	The corresponding coefficients are the last two minors $M_{n+1}$ and $M_{n+2}$ of the $(n+2)\times(n+1)$--matrix $\mathcal J$ evaluated at $p,$ where 
	
	$$\mathcal J=\begin{pmatrix} 
	\tilde{g}_{1,1}&\frac{\partial g_1}{\partial x_2}&\dots&\frac{\partial g_1}{\partial x_n}&\frac{\partial g_1}{\partial y}&\frac{\partial g_1}{\partial t}\\
	\tilde{g}_{2,1}&\frac{\partial g_2}{\partial x_2}&\dots&\frac{\partial g_2}{\partial x_n}&\frac{\partial g_2}{\partial y}&\frac{\partial g_2}{\partial t}\\
	\vdots & \vdots & \ddots & \vdots & \vdots & \vdots\\
	\tilde{g}_{n+1,1}&\frac{\partial g_{n+1}}{\partial x_2}&\dots&\frac{\partial g_{n+1}}{\partial x_n}&\frac{\partial g_{n+1}}{\partial y}&\frac{\partial g_{n+1}}{\partial t}\\
	\end{pmatrix}.$$
	
	The projection of the tangent line to the curve $\tilde{\mathcal C}$ at the point $p$ is $1$-dimensional if and only if at least one of $M_{n+1}(p)$ and $M_{n+2}(p)$ is not zero. This condition is algebraic, therefore, the set of all tuples $f_1,\ldots,f_{n+1}\in\C^A,$ that satisfy it is Zariski open. Thus, the only thing that we still need to show is that this set is non-empty. Indeed, let $M_{n+1}(p)$ and $M_{n+2}(p)$ both vanish for some $f_1,\ldots,f_{n+1}\in\C^A.$ The latter means that the following equalities hold: 
	\begin{align*}
	M_{n+1}=\begin{vmatrix} 
	\tilde{g}_{1,1}&\frac{\partial g_1}{\partial x_2}&\dots&\frac{\partial g_1}{\partial x_n}&\frac{\partial g_1}{\partial t}\\
	\tilde{g}_{2,1}&\frac{\partial g_2}{\partial x_2}&\dots&\frac{\partial g_2}{\partial x_n}&\frac{\partial g_2}{\partial t}\\
	\vdots & \vdots & \ddots & \vdots & \vdots\\
	\tilde{g}_{n+1,1}&\frac{\partial g_{n+1}}{\partial x_2}&\dots&\frac{\partial g_{n+1}}{\partial x_n}&\frac{\partial g_{n+1}}{\partial t}
	\end{vmatrix}=0,
	~M_{n+2}=\begin{vmatrix} 
	\tilde{g}_{1,1}&\frac{\partial g_1}{\partial x_2}&\dots&\frac{\partial g_1}{\partial x_n}&\frac{\partial g_1}{\partial y}\\
	\tilde{g}_{2,1}&\frac{\partial g_2}{\partial x_2}&\dots&\frac{\partial g_2}{\partial x_n}&\frac{\partial g_2}{\partial y}\\
	\vdots & \vdots & \ddots & \vdots & \vdots  \\
	\tilde{g}_{n+1,1}&\frac{\partial g_{n+1}}{\partial x_2}&\dots&\frac{\partial g_{n+1}}{\partial x_n}&\frac{\partial g_{n+1}}{\partial y}\\
	\end{vmatrix}=0.
	\end{align*}
	Let us look at the first equality. We have that the Jacobian of $g_1,\ldots,g_{n+1}$ does not vanish at $p.$ Expanding it along the second to last column, we conclude that at least one of the cofactors is not zero. Without loss of generality, assume that this is the cofactor $D_1$ of $\frac{\partial g_1} {\partial y}.$ Now, let us expand the determinant $M_{n+1}$ along the first column. The cofactors $D_j$ of the elements $\tilde{g}_{j,1}$ are exactly the same as the ones of $\frac{\partial g_j} {\partial y}$ in the Jacobian matrix of $g_1,\ldots,g_{n+1}.$ Thus we have: 
	$$M_{n+1}\mid_{p}=\tilde{g}_{1,1}D_1\mid_{p}+\sum_{j=2}^{n+1}(-1)^{n+1+j}\tilde{g}_{j,1}D_j\mid_{p}=0,$$ or, equivalently,
	\begin{equation}\label{nondeg}
	\tilde{g}_{1,1}D_1\mid_{p}=\sum_{j=2}^{n+1}(-1)^{n+j}\tilde{g}_{j,1}D_j\mid_{p}.
	\end{equation}
	
	By Assumption \ref{horizontal}, the polynomial $\tilde{g}_{1,1}$ is nonzero, therefore, it has at least one monomial with a nonzero coefficient. If we change this coefficient, the left-hand side of (\ref{nondeg}) changes, while the right-hand side does not. Therefore, the equality is no longer true, which concludes the proof of Part $2.$
\end{proof}

The following result is a straightforward corollary of Theorem \ref{lemmain} and Lemma \ref{punct}. 

\begin{theor}\label{maintheor}
	Let $A\subset\Z^{n+2},~n\geqslant 1,$ be a finite set of full dimension containing $0$ and satisfying Assumptions \ref{indall}, \ref{horiz_latt} and \ref{horizontal}. Let $\Delta\subset\R^{n+2}$ be its convex hull. For generic $f_1,\ldots,f_{n+1}\in\C^A,$ the image of the $1-$dimensional complete intersection ${\tilde{\mathcal{C}}=\{f_1=\ldots=f_{n+1}=0\}}$ under the projection $\pi\colon\CC^{n+2}\to\CC^2$ forgetting the first $n$ coordinates is an algebraic plane curve with punctured points. Its singular locus $\mathcal S$ (not including the punctured points) consists of isolated singular points, and the number $\mathcal D=\sum\limits_{s\in\mathcal S}\delta(s)$ can be computed via the following formula:  
	\begin{equation}\label{mainformula}
	\mathcal D=\dfrac{1}{2}\Bigg(\area(P)-(n+1)\vol(\Delta)+\sum\limits_{\Gamma\in\mathcal{H}(\Delta)}\vol(\Gamma)-\sum\limits_{\Gamma\in\mathcal{F}(\Delta)}\vol(\Gamma)\sum\limits_1^{\infty}(i_r^{\Gamma}-1)\Bigg),
	\end{equation}
	where $\delta(s)$ is the $\delta$-invariant of the singular point $s$, the polygon $P=\int_{\pi}(\Delta)$ is the Minkowski integral of $\Delta$ with respect to the projection $\pi$, and the set $\mathcal{F}(\Delta)$ is the set of all facets of the polytope $\Delta.$
\end{theor}

\begin{proof}
Assumption \ref{horizontal} guarantees that for every horizontal facet $\Gamma\subset\Delta,$ we have $i^{\Gamma}=(1,1,\ldots).$ Therefore, in this case, we have the equality  $$\sum\limits_{\Gamma\in\mathcal{F}(\Delta)}\vol(\Gamma)\sum(i_r^{\Gamma}-1)=\sum\limits_{\Gamma\in\mathcal{F}(\Delta)\setminus\mathcal{H}(\Delta)}\vol(\Gamma)\sum(i_r^{\Gamma}-1).$$
Theorem \ref{lemmain} gives the formula for the sum $\mathcal  D$ of the $\delta$-invariants of all the singularities of the closure $\mathcal C$ of the curve $\pi(\tilde{\mathcal C}).$ Lemma \ref{punct} implies that all the points of the complement $\mathcal C\setminus \pi(\tilde{\mathcal C})$ are smooth, therefore those points do not contribute to the sum $D,$ which concludes the proof of the theorem. 
\end{proof}

\begin{rem}
Note that Theorem \ref{maintheor} agrees with Conjecture \ref{c2}. Indeed, since for every horizontal facet $\Gamma\subset\Delta,$ we have $i^{\Gamma}=(1,1,\ldots).$ So, we have  $$\sum\limits_{\Gamma\in\mathcal{H}(\Delta)}\vol(\Gamma)\big(2i_1^{\Gamma}-(i_1^{\Gamma})^2\big)=\sum\limits_{\Gamma\in\mathcal{H}(\Delta)}\vol(\Gamma)(2-1^2)=\sum\limits_{\Gamma\in\mathcal{H}(\Delta)}\vol(\Gamma).$$
All the other summands in (\ref{conjform}) are the same as in (\ref{mainformula}), so these two expressions give the same answer.
\end{rem}

\begin{exa}
	Let us apply Theorem \ref{maintheor} to the following two support sets (see Figure 7): 
	\begin{align*}
	&A_1=\{(0,0,0),(1,0,0),(2,0,0),(3,0,0),(0,1,0),(0,0,1)\}\\ 
	&A_2=\{(0,0,0),(1,0,0),(3,0,0),(0,1,0),(0,0,1)\}
	\end{align*} 
	\begin{center}
		\begin{tikzpicture}
		\pattern[pattern=north east lines, pattern color=orange] (0,7)--(2,1)--(1,0)--(0,7);
		\pattern[pattern=north east lines, pattern color=orange] (6,7)--(8,1)--(7,0)--(6,7);
		\draw[ultra thick] (0,1)--(0,7);
		\draw[ultra thick] (0,1)--(1,0);
		\draw[ultra thick] (0,7)--(2,1);
		\draw[ultra thick] (1,0)--(2,1);
		\draw[ultra thick] (0,7)--(1,0);
		\draw[dashed, ultra thick] (0,1)--(2,1);
		\draw[fill,red] (0,1) circle [radius=0.1];
		\draw[fill,red] (0,7) circle [radius=0.1];
		\draw[fill,red] (1,0) circle [radius=0.1];
		\draw[fill,red] (2,1) circle [radius=0.1];
		\draw[fill,red] (0,3) circle [radius=0.1];
		\draw[fill,red] (0,5) circle [radius=0.1];
		\draw[ultra thick] (6,1)--(6,7);
		\draw[ultra thick] (6,1)--(7,0);
		\draw[ultra thick] (6,7)--(8,1);
		\draw[ultra thick] (7,0)--(8,1);
		\draw[ultra thick] (6,7)--(7,0);
		\draw[dashed, ultra thick] (6,1)--(8,1);
		\draw[fill,red] (6,1) circle [radius=0.1];
		\draw[fill,red] (7,0) circle [radius=0.1];
		\draw[fill,red] (8,1) circle [radius=0.1];
		\draw[fill,red] (6,7) circle [radius=0.1];
		\draw[fill,red] (6,3) circle [radius=0.1];
		\draw [ultra thick, fill=white] (6,5) circle [radius=0.1];
		\node[below] at (2,-0.5) {{\bf Figure 7.} Two similar examples with different forking paths singularities at infinity.};
		\node[left] at (0,1) {(0,0,0)};
		\node[left] at (6,1) {(0,0,0)};
		\node[left] at (0,5) {(2,0,0)};
		\node[left] at (0,7) {(3,0,0)};
		\node[left] at (0,3) {(1,0,0)};
		\node[left] at (6,5) {(2,0,0)};
		\node[left] at (6,7) {(3,0,0)};
		\node[left] at (6,3) {(1,0,0)};
		\node[right] at (2,1) {(0,0,1)};
		\node[right] at (1,0) {(0,1,0)};
		\node[right] at (7,0) {(0,1,0)};
		\node[right] at (8,1) {(0,0,1)};
		\end{tikzpicture}
	\end{center}
	We have that $\conv(A_1)=\conv(A_2)=\Delta,$ and $\vol(\Delta)=3.$ In this case, the polygon $P$ is a standard simplex of size $3,$ therefore, $\area(P)=9.$ In both cases, the horizontal facet contributes exactly $1$ punctured point. Also, in both cases, we have exactly one facet $\Gamma\subset\Delta$ (hatched orange) such that $\ind_v(A_1\cap\Gamma)\neq 1$ and $\ind_v(A_2\cap\Gamma)\neq 1.$ The area of this facet is equal to $1.$
	
	In the first case, we have $i^{\Gamma}=(3,1,\ldots),$ while in the second case $i^{\Gamma}=(3,3,1,\ldots).$ 
	Substituting all of this data into (\ref{mainformula}), we get:
	\begin{align*}
	&\mathcal{D}_1=\dfrac{9-2\cdot 3+1-1\cdot(3-1)}{2}=\dfrac{9-6+1-2}{2}=1;\\
	&\mathcal{D}_2=\dfrac{9-2\cdot 3+1-1\cdot((3-1)+(3-1))}{2}=\dfrac{9-6+1-4}{2}=0.
	\end{align*}
\end{exa}

\subsection{The Tropical Counterpart}

It is very natural to study the behaviour of singularities of generic objects under passing to the tropical limit. There is a number of papers in which this problem is addressed, we first give a short overview. 

According to Mikhalkin's correspondence theorem (see \cite{Mikh} and \cite{S}), under passing to the tropical limit, the nodes of a generic nodal curve in $\CC^2$ are accumulated with certain multiplicities in triple points and crossings of the corresponding tropical curve, as well as the midpoints of its multiple edges. 

The main result of \cite{MMS1} is the classification of singular tropical curves of maximal-dimensional geometric type. A singular point of such a tropical curve of this type is either a crossing of two edges, or a multiplicity $3$ three-valent vertex, or a point on a midpoint of an edge of weight $2.$ The work \cite{MMS2} by the same authors addresses a similar problem for singular tropical surfaces in the $3$-dimensional space.

The work \cite{MR} is devoted to the study of behaviour of log-inflection points of curves in $\CC^2$ under the passage to the tropical limit. Assuming that the limiting tropical curve is smooth, the authors show that the log-inflection points accumulate by pairs at the midpoints of its bounded edges.

In \cite{LM}, lifts of tropical bitangents (i.e., nodes of the dual curve) to the tropicalization of a given complex algebraic curve and their lifting multiplicities are studied. In particular, it is shown that all seven bitangents of a smooth tropical plane quartic lift in sets of four to algebraic bitangents.

Algebraic and combinatorial aspects of projections of $m-$dimensional tropical varieties onto $(m+1)-$dimensional planes are studied in \cite{HT}. For the case of curves (which is exactly the tropical version of the problem addressed in our paper), some bounds for the number of self-intersection of projections onto the plane were given, as well as constructions with many self-intersections.   

Let $b\colon A\to\Z$ be a sufficiently generic function. Consider the family $\tilde{\mathcal{C}}_{\tau}=\{f_{1,\tau}=\ldots=f_{n+1,\tau}=0\},$ where $$f_{j,\tau}=\sum\limits_{\alpha\in A}c_{j,\alpha}(\tau)x_1^{\alpha_1}\ldots x_n^{\alpha_n}y^{\alpha_{n+1}}t^{\alpha_{n+2}},$$ and every coefficient $c_{j,\alpha}(\tau)$ is a generic polynomial in $\tau$ of degree $b(\alpha).$ Denote by $\mathcal C_{\tau}$ the closure of the image $\pi(\tilde{\mathcal{C}}_{\tau}).$ 

The tropical limit of $\tilde{\mathcal{C}}_{\tau}$ as $\tau\rightarrow\infty$ is a tropical curve $\tilde{C}$. By $C$ denote its image under the projection forgetting the first $n$ coordinates.
\begin{prb}
In the same notation as above, which points of the tropical curve $C$ are the tropical limits of nodes of the curves $\mathcal C_{\tau}$ as $\tau\rightarrow\infty$? How many nodes does each of those points ``accumulate''? 
\end{prb}

The example below illustrates the case when the tropical limit of the nodes of the curves $\mathcal{C}_{\tau}$ is a node of the corresponding plane tropical curve $C.$ 
\begin{exa}\label{trop1}
Take $A=\{(0,0,0),(1,0,0),(2,0,0),(1,1,0),(0,0,1)\}.$ The volume of the polytope $\Delta=\conv(A)$ is equal to $2.$ Its fiber polytope with respect to the projection $\pi\colon\CC^3\to\CC^2$ forgetting the first coordinate is of area $6$. For each of the facets $\Gamma\in\mathcal{F}(\Delta)$ we have $\ind_v(A\cap\Delta)=1,$ and the set $\mathcal H(\Delta)$ of the horizontal facets is empty. Therefore, by Theorem \ref{developed}, the number of nodes of the projection of a generic complete intersection defined by polynomials $f,g\in\C^A,$ is equal to $\dfrac{6-2\cdot 2}{2}=1.$ 

\begin{center}
\begin{tikzpicture}[scale=0.9]
\draw[dashed, ultra thick] (0,0)--(2,2);
\draw[dashed, ultra thick] (0,0)--(0,4);
\draw[dashed, ultra thick] (0,0)--(-1,-2);
\draw[ultra thick] (0,4)--(-1,-2);
\draw[ultra thick] (0,4)--(2,2);
\draw[ultra thick] (2,2)--(-1,-2);
\draw[fill,red] (0,0) circle [radius=0.1];
\draw[fill,red] (2,2) circle [radius=0.1];
\draw[fill,red] (-1,-2) circle [radius=0.1];
\draw[fill,red] (0,2) circle [radius=0.1];
\draw[fill,red] (0,4) circle [radius=0.1];
\draw[ultra thick] (6,0)--(10,0);
\draw[ultra thick] (6,0)--(6,4);
\draw[ultra thick] (6,4)--(10,2);
\draw[ultra thick] (10,0)--(10,2);
\draw[fill] (6,0) circle [radius=0.1];
\draw[fill] (10,0) circle [radius=0.1];
\draw[fill] (10,2) circle [radius=0.1];
\draw[fill] (6,4) circle [radius=0.1];
\draw[fill] (6,2) circle [radius=0.1];
\draw[fill] (8,0) circle [radius=0.1];
\draw[fill] (8,2) circle [radius=0.1];
\node[below] at (7,-1.5) {{\bf Figure 8.} The polytope $\Delta$ and its fiber polygon $P.$};
\node[right] at (0.5,0) {(0,0,0)};
\node[left] at (-1,-2) {(0,0,1)};
\node[right] at (2,2) {(1,1,0)};
\node[right] at (0,2) {(1,0,0)};
\node[left] at (0,4) {(2,0,0)};
\node[left] at (6,0) {(0,0)};
\node[left] at (6,4) {(0,2)};
\node[left] at (6,2) {(0,1)};
\node[right] at (10,2) {(2,1)};
\node[right] at (10,0) {(2,0)};
\node[below] at (8,2) {(1,1)};
\node[below] at (8,0) {(1,0)};
\end{tikzpicture}
\end{center}

One can easily express the coordinates $(u,v)$ of this unique double point in terms of the coefficients of the polynomials $f,g.$ Indeed, suppose that we have $f=a_0+a_1 x+a_2 x^2+a_3 t+a_4 x y$ and $g=b_0+b_1 x+b_2 x^2+b_3 t+b_4 x y.$ If $f(x,u,v)$ and $g(x,u,v)$ have two common roots as polynomials in the variable $x,$ then, the numbers $u,v$ clearly should satisfy the following equations: 
	\begin{equation}\label{coord_node}
\left\{\begin{aligned}
(a_1+a_4u)b_2=(b_1+b_4u)a_2\\
(a_0+a_3v)b_2=(b_0+b_3v)a_2\\
 \end{aligned}\right. \iff
 \left\{\begin{aligned}
u=\dfrac{a_1b_2-b_1a_2}{a_2b_4-b_2a_4}\\
v=\dfrac{a_0b_2-b_0a_2}{a_2b_3-b_2a_3}
\end{aligned}\right. 
	.\end{equation}
	
Now, consider the following deformation of the polynomials $f$ and $g:$
\begin{align}\label{deform}
\tilde{f}_{\tau}(x,y,t)&=\tau^{-1}+\tau^{-2}x+\tau^{-8}x^2+\tau^{-4}t+\tau^{-8}xy;\\
\tilde{g}_{\tau}(x,y,t)&=\tau^{5}+\tau^{5}x+\tau^{5}x^2+\tau^{5}t+\tau^{6}xy;
\end{align}
Passing to the tropical limit, we obtain a pair of tropical polynomials $F(X,Y,T)$ and $G(X,Y,T),$ where: 
\begin{align*}
F(X,Y,T)&=\max(-1,X-2,2X-8,T-4,X+Y-8)\\
G(X,Y,T)&=\max(5,X+5,2X+5,T+5,X+Y+6)\\
\end{align*}
The intersection of the tropical hypersurfaces defined by polynomials $F$ and $G$ is a $3$-valent tropical curve $\tilde{C}$. Its image $C$ under the projection forgetting the first coordinate and the corresponding subdivision of the polygon $P$ are shown in Figure $9$ below.

\begin{center}
\begin{tikzpicture}[scale=0.7]
\draw[ultra thick](1,4)--(5,8);
\draw[ultra thick](1,3)--(1,4);
\draw[ultra thick] (1,3)--(8,3);
\draw[ultra thick] (1,3)--(0,2);
\draw[ultra thick] (0,2)--(-5,2);
\draw[ultra thick] (0,2)--(0,-3);
\draw[ultra thick] (1,4)--(-5,4);
\draw[ultra thick] (5,8)--(7,12);
\draw[ultra thick] (5,8)--(5,-3);
\draw[fill,blue] (1,4) circle [radius=0.2];
\draw[fill,blue] (5,8) circle [radius=0.2];
\draw[fill,red] (5,3) circle [radius=0.2];
\draw[fill,blue] (0,2) circle [radius=0.2];
\draw[fill,blue] (1,3) circle [radius=0.2];
\node[above left] at (1.3,4) {$(1,4)$};
\node[below right] at (5,8) {$(5,8)$};
\node[above right] at (5,3) {$(5,3)$};
\node[below left] at (0,2) {$(0,2)$};
\node[below right] at (0.8,3) {$(1,3)$};
\node[above left] at (2,6) {$2T+2$};
\node[above right] at (6,5) {$2Y+T$};
\node[above] at (3.2,3.5) {$Y+T+5$};
\node[right] at (6,1) {$2Y+3$};
\node[left] at (0,3) {$T+6$};
\node[below left] at (-2,0) {$8$};
\node[below] at (3,1) {$Y+8$};
\draw[ultra thick] (12,-2)--(16,-2);
\draw[ultra thick] (12,-2)--(12,2);
\draw[ultra thick] (12,2)--(16,0);
\draw[ultra thick] (16,-2)--(16,0);
\draw[ultra thick] (14,0)--(14,-2);
\draw[ultra thick] (12,0)--(14,-2);
\draw[ultra thick] (12,0)--(16,0);
\draw[ultra thick] (12,2)--(14,0);
\draw[fill] (12,-2) circle [radius=0.1];
\draw[fill] (16,-2) circle [radius=0.1];
\draw[fill] (14,-2) circle [radius=0.1];
\draw[fill] (12,0) circle [radius=0.1];
\draw[fill] (14,0) circle [radius=0.1];
\draw[fill] (16,0) circle [radius=0.1];
\draw[fill] (12,2) circle [radius=0.1];
\node[left] at (12,-2) {$(0,0)$};
\node[left] at (12,2) {$(0,2)$};
\node[left] at (12,0) {$(0,1)$};
\node[right] at (16,0) {$(2,1)$};
\node[right] at (16,-2) {$(2,0)$};
\node[below right] at (14,0) {$(1,1)$};
\node[below] at (14,-2) {$(1,0)$};
\node[below] at (7,-5) {{\bf Figure 9.} The tropical curve $C$ and the corresponding subdivision of the polygon $P.$};
\end{tikzpicture}
\end{center}
The tropical curve $C$ has one node (marked red). Now, let us substitute the coefficients of the polynomials $\tilde{f}_{\tau}(x,y,t)$ and $\tilde{g}_{\tau}(x,y,t)$ into the expressions for the coordinates $(u,v)$ of the node of the curve $\mathcal C$ (see (\ref{coord_node})). We get: 
\begin{equation}
\left\{\begin{aligned}
u(\tau)=\dfrac{a_1(\tau)b_2(\tau)-b_1(\tau)a_2(\tau)}{a_2(\tau)b_4(\tau)-b_2(\tau)a_4(\tau)}\\
v(\tau)=\dfrac{a_0(\tau)b_2(\tau)-b_0(\tau)a_2(\tau)}{a_2(\tau)b_3(\tau)-b_2(\tau)a_3(\tau)}
\end{aligned}\right. \Longrightarrow
\left\{\begin{aligned}
u(\tau)&=\dfrac{-\tau^{-3}+\tau^3}{-\tau^{-3}+\tau^{-2}}=1+\tau+\tau^2+\tau^3+\tau^4+\tau^5\\
v(\tau)&=\dfrac{-\tau^{-3}-\tau^4}{\tau^{-3}-\tau}=\dfrac{\tau^7-1}{1-\tau^4}\end{aligned}\right. 
\end{equation}
Note that as $\tau\rightarrow\infty,$ we have  $u(\tau)\sim\tau^5$ and $v(\tau)\sim-\tau^3,$ which agrees with the coordinates of the node of the tropical curve $C$ being $(5,3).$
\end{exa}

\begin{exa}\label{trop2}
Now, consider the following deformation of the polynomials $f$ and $g:$
\begin{align}\label{deform2}
\tilde{f}_{\tau}(x,y,t)&=\tau+\tau^{4}x+\tau x^2+\tau t+\tau^{5}xy;\\
\tilde{g}_{\tau}(x,y,t)&=\tau+\tau x+\tau x^2+\tau^{2}t+\tau^{2}xy;
\end{align}
Passing to the tropical limit, we obtain a pair of tropical polynomials $F(X,Y,T)$ and $G(X,Y,T),$ where: 
\begin{align*}
F(X,Y,T)&=\max(1,X+4,2X+1,T+1,X+Y+5)\\
G(X,Y,T)&=\max(1,X+1,2X+1,T+2,X+Y+2)\\
\end{align*}
The intersection of the tropical hypersurfaces defined by polynomials $F$ and $G$ is a $3$-valent tropical curve $\tilde{C}$. Its image $C$ under the projection forgetting the first coordinate and the corresponding subdivision of the polygon $P$ are shown in Figure $10$ below.

\begin{center}
	\begin{tikzpicture}[scale=0.7]
	\draw[ultra thick](-1,-4)--(-1,5);
	\draw[ultra thick](-5,-1)--(4,-1);
	\draw[ultra thick] (-5,5)--(-1,5);
	\draw[ultra thick] (-1,5)--(2,11);
	\draw[fill,blue] (-1,-1) circle [radius=0.2];
	\draw[fill,blue] (-1,5) circle [radius=0.2];
	\draw[fill,red] (-1,0) circle [radius=0.2];
    \node[below left] at (-1,-1) {$(-1,-1)$};
	\node[below right] at (-1,5) {$(-1,5)$};
	\node[right] at (-1,0) {$(-1,0)$};
	\node[above left] at (-1,8) {$2T+6$};
	\node[above] at (2,1.5) {$2Y+T+13$};
    \node[left] at (-2,2) {$T+11$};
	\node[below left] at (-3,-3) {$10$};
	\node[below] at (2,-2) {$Y+11$};
	\draw[ultra thick] (8,-2)--(12,-2);
	\draw[ultra thick] (8,-2)--(8,2);
	\draw[ultra thick] (8,2)--(12,0);
	\draw[ultra thick] (12,-2)--(12,0);
    \draw[ultra thick] (8,0)--(12,0);
	\draw[fill] (8,-2) circle [radius=0.1];
	\draw[fill] (12,-2) circle [radius=0.1];
	\draw[fill] (10,-2) circle [radius=0.1];
	\draw[fill] (8,0) circle [radius=0.1];
	\draw[fill] (10,0) circle [radius=0.1];
	\draw[fill] (12,0) circle [radius=0.1];
	\draw[fill] (8,2) circle [radius=0.1];
	\node[left] at (8,-2) {$(0,0)$};
	\node[left] at (8,2) {$(0,2)$};
	\node[left] at (8,0) {$(0,1)$};
	\node[right] at (12,0) {$(2,1)$};
	\node[right] at (12,-2) {$(2,0)$};
	\node[below right] at (10,0) {$(1,1)$};
	\node[below] at (10,-2) {$(1,0)$};
	\node[below] at (3,-5) {{\bf Figure 10.} The tropical curve $C$ and the corresponding subdivision of the polygon $P.$};
	\end{tikzpicture}
\end{center}
Now, let us substitute the coefficients of the polynomials $\tilde{f}_{\tau}(x,y,t)$ and $\tilde{g}_{\tau}(x,y,t)$ into the expressions for the coordinates $(u,v)$ of the node of the curve $\mathcal C$ (see (\ref{coord_node})). We get: 
\begin{equation}
\left\{\begin{aligned}
u(\tau)=\dfrac{a_1(\tau)b_2(\tau)-b_1(\tau)a_2(\tau)}{a_2(\tau)b_4(\tau)-b_2(\tau)a_4(\tau)}\\
v(\tau)=\dfrac{a_0(\tau)b_2(\tau)-b_0(\tau)a_2(\tau)}{a_2(\tau)b_3(\tau)-b_2(\tau)a_3(\tau)}
\end{aligned}\right. \Longrightarrow
\left\{\begin{aligned}
u(\tau)&=\dfrac{\tau^{5}-\tau^2}{\tau^{3}-\tau^{6}}=-\dfrac{\tau^2(\tau^3-1)}{\tau^3(\tau^3-1)}\\
v(\tau)&=\dfrac{\tau^{2}-\tau^2}{\tau^{3}-\tau^2}=0\end{aligned}\right. 
\end{equation}
So the tropical limit of the nodes of the curves $\mathcal{C}_{\tau}$ in this case is the point $(-1,0)$  (marked red). Unlike the case considered in Example \ref{trop1}, this point is not a vertex of the corresponding nodal tropical curve. Moreover, the edge containing this point is of multiplicity 2. 
\end{exa}

{\bf Arina Voorhaar (Arkhipova)}: National Research University Higher School of Economics, Russian Federation, Department of Mathematics, 6 Usacheva st, Moscow 119048;

University of Geneva, Switzerland, Department of Mathematics, Route de Drize 7, 1227 Carouge GE

\href{mailto:aarhipova@hse.ru}{\nolinkurl{aarhipova@hse.ru}}\\ 
\href{mailto:Arina.Arkhipova@unige.ch}{\nolinkurl{Arina.Arkhipova@unige.ch}}
\end{document}